\documentclass[10pt]{amsart}
\usepackage{amsxtra, amsfonts, amsmath, amsthm, amstext, amssymb, amscd, mathrsfs, verbatim, color}
\usepackage{threeparttable}
\usepackage{mathtools}
\usepackage[ansinew]{inputenc}\usepackage[T1]{fontenc}
\usepackage[all,cmtip]{xy}
\addtolength{\topmargin}{-0.4cm}
\addtolength{\textheight}{0.4cm}
\addtolength{\evensidemargin}{-0.6cm}
\addtolength{\oddsidemargin}{-0.6cm}
\addtolength{\textwidth}{1.2cm}
\theoremstyle{plain}


\newtheorem{theorem}{Theorem}[section]
\newtheorem{lemma}[theorem]{Lemma}
\newtheorem{definition-theorem}[theorem]{Definition-Theorem}
\newtheorem{proposition}[theorem]{Proposition}
\newtheorem{corollary}[theorem]{Corollary}

\theoremstyle{definition}
\newtheorem{definition}[theorem]{Definition}
\newtheorem{example}[theorem]{Example}
\newtheorem{remark}[theorem]{Remark}
\newtheorem{notation}[theorem]{Notation}
\newcommand \bth[1] { \begin{theorem}\label{t#1} }
\newcommand \ble[1] { \begin{lemma}\label{l#1} }

\newcommand \bpr[1] { \begin{proposition}\label{p#1} }
\newcommand \bco[1] { \begin{corollary}\label{c#1} }
\newcommand \bde[1] { \begin{definition}\label{d#1}\rm }
\newcommand \bex[1] { \begin{example}\label{e#1}\rm }
\newcommand \bre[1] { \begin{remark}\label{r#1}\rm }

\newcommand \bnota[1] {\begin{notation}\label{n#1}\rm }
\newcommand {\ele} { \end{lemma} }

\newcommand {\epr} { \end{proposition} }
\newcommand {\eco} { \end{corollary} }
\newcommand {\ede} { \end{definition} }
\newcommand {\eex} { \end{example} }
\newcommand {\ere} { \end{remark} }
\newcommand {\enota} { \end{notation} }





\begin{document}
\title[Product Functor]{On the Product functor on inner forms of the general linear group over a non-Archimedean local field} 

\author[Kei Yuen Chan]{Kei Yuen Chan}
\address{Department of Mathematics, The University of Hong Kong \\ Shanghai Center for Mathematical Sciences, Fudan University }
\email{kychan1@hku.hk}
\maketitle

\begin{abstract}
Let $G_n$ be an inner form of a general linear group over a non-Archimedean local field. We fix an arbitrary irreducible representation $\sigma$ of $G_n$. Building on the work of Lapid-M\'inguez on the irreducibility of parabolic inductions, we show how to define a full subcategory of the category of smooth representations of some $G_m$, on which the parabolic induction functor $\tau \mapsto \tau \times \sigma$ is fully-faithful. A key ingredient of our proof for the fully-faithfulness is constructions of indecomposable representations of length 2.

Such result for a special situation has been previously applied in proving the local non-tempered Gan-Gross-Prasad conjecture for non-Archimedean general linear groups. In this article, we apply the fully-faithful result to prove a certain big derivative arising from Jacquet functor satisfies the property that its socle is irreducible and has multiplicity one in the Jordan-H\"older sequence of the big derivative.
\end{abstract}

\section{Introduction}

Let $F$ be a non-Archimedean local field and let $D$ be a finite-dimensional $F$-central division algebra. Let $G_n=\mathrm{GL}_{n}(D)$ be the general linear group over $D$. Let $\mathrm{Alg}(G_n)$ be the category of smooth representations of $G_n$ over $\mathbb C$. The parabolic induction is an important tool in constructing representations and plays a central role in the Zelevinsky classification of irreducible representations of $\mathrm{GL}_{n}(F)$ \cite{Ze80}. Recently, Aizenbud-Lapid and Lapid-M\'inguez \cite{LM16, LM19, LM20, AL22, LM22} extensively study the irreducibility of parabolic inductions, with rich connections to combinatorics and geometry.



This paper focuses on some homological aspects of parabolic inductions. The main purpose is to elaborate some observations and results in \cite{Ch22}, which we use functorial properties of parabolic inductions for proving the local non-tempered Gan-Gross-Prasad conjecture \cite{GGP20}. Our main result addresses the remark in \cite[Section 9.2]{Ch22}.

We first explain the main object-- the product functor. We denote by $\times$ the normalized parabolic induction. For a fixed irreducible representation $\pi$ and a full subcategory $\mathcal A$ of $\mathrm{Alg}(G_n)$, define
\[  \times_{\pi, \mathcal A}: \mathcal A \rightarrow \mathrm{Alg}(G_{n+k}), 
\]
given by $\times_{\pi, \mathcal A}(\omega) =\pi \times \omega$. Here we regard $\times_{\pi, \mathcal A}$ as a functor such that for a morphism $f: \pi' \rightarrow \pi''$ in $\mathcal A$, $\times_{\pi, \mathcal A}(f)=\mathrm{Id}_{\pi} \times f$, the one induced from the parabolic induction (see Section \ref{ss product functor} for more precise descriptions).

 Some general results about the product functor with respect to smooth duals and cohomological duals are given in Section \ref{s general product functor}.

We briefly recall the Zelevinsky theory \cite{Ze80} for $D=F$, see Section \ref{ss basic notations} for more notations. A {\it segment} takes the form $[a,b]_{\rho}$ for a supercuspidal representation $\rho$ of some $G_m$ and $a,b \in \mathbb C$ with $b-a\in \mathbb Z_{\geq 0}$. Zelevinsky \cite{Ze80} associates each segment $\Delta$ with a representation $\langle \Delta \rangle$, called a {\it segment representation}. A {\it multisegment} is a multiset of segments. Let $\mathrm{Mult}$ be the set of multisegments. For $\mathfrak m \in \mathrm{Mult}$, let $\langle \mathfrak m \rangle$ be the associated Zelevinsky module \cite{Ze80}.

The irreducibility of the parabolic induction is extensively studied in \cite{LM16}. A first question is that for a given irreducible representation $\pi$ of $G_n$, how one can find another irreducible representation $\pi'$ of $G_m$ such that $\pi \times \pi'$ is also irreducible. One way to do so is via 'building from the (basic) segment case'. The precise meaning is as follows. Set
\[  \mathcal M_{\pi}=\left\{ \mathfrak n \in \mathrm{Mult}: \langle \Delta \rangle \times \pi \mbox{ is irreducible }  \forall \Delta \in \mathfrak n \right\} .
\]
Then, for any $\mathfrak n \in \mathcal M_{\pi}$,  $\langle \mathfrak n \rangle \times \pi$ is irreducible \cite[Proposition 6.1]{LM16}. The converse is not true in general i.e. if $\langle \mathfrak n \rangle \times \pi$ is irreducible, it is not necessary that $\mathfrak n \in \mathcal M_{\pi}$. 

We write $\mathfrak m_1 \leq_Z \mathfrak m_2$ if $\mathfrak m_1$ is obtained from $\mathfrak m_2$ by a sequence of intersection-union operations (see Section \ref{ss intersection union}). Our observation is that the set $\mathcal M_{\pi}$ is closed under intersection-union operations in the following sense:
\begin{theorem} (=Theorem \ref{thm intersection-union closed}) \label{thm closed zel segment case}
Let $\pi$ be an irreducible representation of $G_n$. For $\mathfrak n \in \mathcal M_{\pi}$, if $\mathfrak n'$ is another multisegment with $\mathfrak n' \leq_Z \mathfrak n$, then $\mathfrak n' \in \mathcal M_{\pi}$. 
\end{theorem}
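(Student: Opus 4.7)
The plan is to reduce the theorem to a single elementary intersection-union move on segments and then to verify, via the Lapid-M\'inguez combinatorial criterion, that the set
\[ S_\pi := \{ \Delta \text{ a segment} : \langle \Delta \rangle \times \pi \text{ is irreducible} \} \]
is closed under this operation. Since $\leq_Z$ is by definition the transitive closure of elementary intersection-union moves, an induction on the length of a chain reduces the statement to: if $\mathfrak n = \mathfrak p + \Delta_1 + \Delta_2$ with $\Delta_1, \Delta_2$ linked and $\mathfrak n \in \mathcal M_\pi$, then $\mathfrak p + (\Delta_1 \cap \Delta_2) + (\Delta_1 \cup \Delta_2)$ (omitting the intersection if empty) lies in $\mathcal M_\pi$. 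Because $\mathcal M_\pi$ is a segment-by-segment condition and the segments of $\mathfrak p$ are already in $S_\pi$, the desired conclusion reduces to the single claim: if $\Delta_1, \Delta_2 \in S_\pi$ are linked, then $\Delta_1 \cap \Delta_2, \Delta_1 \cup \Delta_2 \in S_\pi$.

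The next step is to invoke the LM criterion. Writing $\pi \cong \langle \mathfrak m \rangle$ for a multisegment $\mathfrak m$, the condition $\Delta \in S_\pi$ is the conjunction $LC(\Delta, \mathfrak m) \wedge LC(\mathfrak m, \Delta)$, each side being a finite list of counting inequalities involving the endpoints of $\Delta$ and the segments of $\mathfrak m$ on the common cuspidal line. For linked segments $\Delta_1 = [a_1, b_1]$ and $\Delta_2 = [a_2, b_2]$ with $a_1 < a_2 \le b_1 + 1 \le b_2$, one has $\Delta_1 \cap \Delta_2 = [a_2, b_1]$ and $\Delta_1 \cup \Delta_2 = [a_1, b_2]$, so the multiset of left-endpoints and the multiset of right-endpoints of the pair are unchanged and only their pairing is permuted. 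The key combinatorial assertion is therefore that the validity of LC at $(\Delta_1, \Delta_2)$ is preserved under this endpoint transposition.

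The main obstacle is precisely this endpoint-swap verification. For each segment $\Delta' \in \mathfrak m$ on the common cuspidal line of $\Delta_1, \Delta_2$, I would partition according to how the endpoints of $\Delta'$ interleave with $a_1, a_2, b_1, b_2$; in each configuration, the LC counts attached to $\Delta_1 \cap \Delta_2$ and $\Delta_1 \cup \Delta_2$ can be rewritten as linear combinations of those for $\Delta_1$ and $\Delta_2$, and the resulting inequalities follow from the two inputs by additivity. The delicate boundary cases arise when an endpoint of $\Delta'$ lands exactly at one of the pivots $a_2$ or $b_1$; here both of the hypothesized LC inequalities must be invoked jointly, which explains why the hypothesis $\mathfrak n \in \mathcal M_\pi$ must be imposed at both $\Delta_1$ and $\Delta_2$ rather than at just one of them. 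The verification is finite and case-by-case, but does not require any input beyond the explicit form of the LM criterion.
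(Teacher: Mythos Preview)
Your reduction to a single intersection-union move on a linked pair $\Delta_1,\Delta_2$ is correct and matches the paper. The gap is in the combinatorial step that follows.

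First, a point of precision: the Lapid--M\'inguez criterion $LC(\mathfrak m,\Delta)$ is not a ``finite list of counting inequalities'' but the existence of an injective \emph{matching function} $f:X^{\Delta}_{\mathfrak m}\to\widetilde{X}^{\Delta}_{\mathfrak m}$ with $\Delta_i<\Delta_{f(i)}$ (and dually for $RC$). One can of course pass through Hall's theorem, but your plan of ``rewriting the LC counts as linear combinations of those for $\Delta_1$ and $\Delta_2$'' does not reflect how the criterion actually behaves.

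More seriously, the implication you are aiming for,
\[
LC(\mathfrak m,\Delta_1)\ \wedge\ LC(\mathfrak m,\Delta_2)\ \Longrightarrow\ LC(\mathfrak m,\Delta_1\cap\Delta_2)\ \wedge\ LC(\mathfrak m,\Delta_1\cup\Delta_2),
\]
is \emph{false}; the paper records the explicit counterexample $\mathfrak m=\{[0],[1,5]\}$, $\Delta^x=[0,1]$, $\Delta^y=[1,5]$, where both $LC$ hold but $LC(\mathfrak m,[1])$ fails. So no amount of ``additivity'' of $LC$-type inequalities for $\Delta_1,\Delta_2$ can yield the conclusion. What makes the theorem true is that one also has the $RC$ conditions for $\Delta_1,\Delta_2$, and these must be fed into the construction of the new left matching (and vice versa).

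Concretely, the paper builds the matching $F$ for $LC(\mathfrak m,\Delta_1\cup\Delta_2)$ by splicing together the given $f^l_x$ and $f^l_y$ according to whether $i$ lies in $X^{\Delta^x}_{\mathfrak m}$ or only in $X^{\Delta^y}_{\mathfrak m}$; similarly for the intersection. Injectivity is straightforward, but \emph{well-definedness} (that the image actually lands in $\widetilde{X}^{\Delta_1\cup\Delta_2}_{\mathfrak m}$, resp.\ $\widetilde{X}^{\Delta_1\cap\Delta_2}_{\mathfrak m}$) fails exactly on certain bad index sets $\mathscr N$, $\mathscr O$. The key lemmas show $\mathscr N=\mathscr O=\emptyset$ by a ping-pong argument: if some $i'$ were bad, then $f^l_y(i')$ lies in $Y^{\Delta^x}_{\mathfrak m}\cap\widetilde{X}^{\Delta^y}_{\mathfrak m}$, hence $f^r_x\circ f^l_y(i')$ lies in $X^{\Delta^y}_{\mathfrak m}$, and iterating $f^l_y,f^r_x$ produces infinitely many distinct indices in the finite multisegment $\mathfrak m$. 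This is precisely where the $RC$ matching $f^r_x$ enters the proof of the $LC$ claim; your proposal invokes only the two $LC$ hypotheses and therefore cannot close this case.

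In short: the high-level reduction is fine, but the case analysis you sketch will not go through without bringing in the opposite-side matchings and the iterative disjointness argument. That is the missing idea.
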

Our proof for Theorem \ref{thm closed zel segment case} uses properties from intertwining operators on $\square$-irreducible representations.  Another possible approach for proving Theorem \ref{thm closed zel segment case} is to use the combinatorial criteria of Lapid-M\'inguez in \cite[Proposition 5.12]{LM16}.

We now set $\mathcal A_{\pi}=\mathrm{Alg}_{\pi}(G_n)$ to be the full subcategory of $\mathrm{Alg}(G_n)$ whose objects are of finite length and have all simple composition factors isomorphic to $\langle \mathfrak m\rangle$ for some $\mathfrak m \in \mathcal M_{\pi}$. The significance of Theorem \ref{thm closed zel segment case} is that one can obtain plenty examples of extensions from the set $\mathcal M_{\pi}$ and so $\mathcal A_{\pi}$ is not semisimple in most of cases. Indeed, those extensions are preserved under $\times_{\pi, \mathcal A_{\pi}}$, shown in Proposition \ref{prop indecomp non-isomorphic} and Theorem \ref{thm indecomposable length 2}. This in turn implies our main result:
\begin{theorem} \label{thm fully fiathful functor} (=Theorem \ref{thm fully faithful functor})
Let $\pi$ be an irreducible representation of $G_n$. Then $\times_{\pi, \mathcal A_{\pi}}$ is a fully-faithful functor.
\end{theorem}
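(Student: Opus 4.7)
The plan is to prove that the natural map
\[
\Phi_{\omega,\omega'}\colon\ \mathrm{Hom}(\omega,\omega') \longrightarrow \mathrm{Hom}(\pi\times\omega,\ \pi\times\omega')
\]
is a bijection for all $\omega,\omega'\in\mathcal A_\pi$, by a simultaneous induction on $\ell(\omega)+\ell(\omega')$ of two assertions: the bijectivity of $\Phi_{\omega,\omega'}$, and the injectivity of the induced map on $\mathrm{Ext}^1$. Faithfulness of $\Phi_{\omega,\omega'}$ is length-free and handled first: if $\pi\times f=0$, exactness of parabolic induction gives $\pi\times\mathrm{image}(f)=0$; but every nonzero object of $\mathcal A_\pi$ has a composition factor $\langle\mathfrak m\rangle$ with $\mathfrak m\in\mathcal M_\pi$, for which $\pi\times\langle\mathfrak m\rangle$ is a nonzero irreducible. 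Exactness then forces $\mathrm{image}(f)=0$.

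For the base case $\ell(\omega)=\ell(\omega')=1$, write $\omega=\langle\mathfrak m\rangle$, $\omega'=\langle\mathfrak m'\rangle$, $\pi=\langle\mathfrak m_\pi\rangle$. Irreducibility of $\pi\times\langle\mathfrak m\rangle$ combined with the Zelevinsky classification identifies it with $\langle\mathfrak m_\pi+\mathfrak m\rangle$, so surjectivity of $\Phi$ reduces to the trivial implication $\mathfrak m_\pi+\mathfrak m=\mathfrak m_\pi+\mathfrak m'\Rightarrow \mathfrak m=\mathfrak m'$. The $\mathrm{Ext}^1$-injection at this base case is precisely the content of Proposition \ref{prop indecomp non-isomorphic} and Theorem \ref{thm indecomposable length 2}: nonsplit extensions among $\mathcal M_\pi$-irreducibles remain nonsplit after $\pi\times(-)$, and inequivalent classes remain inequivalent. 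For the inductive step, assume $\ell(\omega')\geq 2$ (the other case being symmetric), and fix a short exact sequence $0\to\omega'_1\to\omega'\to\omega'_2\to 0$ with both $\omega'_i\in\mathcal A_\pi$ strictly shorter (possible since $\mathcal A_\pi$, being cut out by composition-factor types, is closed under subs and quotients). The long exact sequences for $\mathrm{Hom}(\omega,-)$ and $\mathrm{Hom}(\pi\times\omega,-)$ form a commutative five-term ladder linked by $\Phi$; the inductive hypothesis gives Hom-isomorphisms at the $\omega'_1,\omega'_2$ positions and $\mathrm{Ext}^1$-injection at the $\omega'_1$ position, and a 4-lemma style diagram chase yields the Hom-isomorphism at $\omega'$. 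A parallel diagram chase, using a short exact sequence in the first argument, upgrades the $\mathrm{Ext}^1$-injection to the current length.

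The main obstacle is the base step of the $\mathrm{Ext}^1$-injection: the statement that a length-two indecomposable $E\in\mathcal A_\pi$ is sent to an indecomposable $\pi\times E$. This is precisely where the intersection-union closure of $\mathcal M_\pi$ (Theorem \ref{thm closed zel segment case}) is indispensable, since it controls the combinatorics of the multisegment arising from $E$ under the Lapid-M\'inguez criterion $LC$, and permits the explicit exhibition of a sub or quotient of $\pi\times E$ which obstructs any hypothetical splitting. That construction is the substance of Proposition \ref{prop indecomp non-isomorphic} and Theorem \ref{thm indecomposable length 2}; once this base obstruction is in place, the inductive machinery described above runs smoothly to produce the full statement.
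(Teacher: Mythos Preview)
Your argument is correct and follows the same route as the paper. The paper packages your length induction and diagram chase into a single abstract criterion, Lemma~\ref{lem fully faithful functor criteria} (quoted from \cite[Lemma A.1]{Ch22}), and then simply verifies its four hypotheses: finite length is definitional, preservation and detection of simples are Proposition~\ref{prop segment indecomp embed} and Lemma~\ref{lem irreducible multisegment sum}, and the $\mathrm{Ext}^1$-injectivity on simples is exactly Proposition~\ref{prop indecomp non-isomorphic} together with Theorem~\ref{thm indecomposable length 2}. Your proof is an explicit unpacking of that lemma in this context, with the same decisive inputs. One small remark: Theorem~\ref{thm closed zel segment case} is not literally needed to run the fully-faithfulness argument; its role is rather to guarantee that $\mathcal A_\pi$ is rich enough (closed downward under $\leq_Z$) for the category to be interesting, not to feed into Proposition~\ref{prop indecomp non-isomorphic}, whose proof only requires that the two simple factors themselves lie in $\mathcal M_\pi$.
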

 \cite{Ch22} deals with the case that $\pi$ is a Speh representation and $\mathcal A$ is some subcategory coming from the irreducibility of the product between a cuspidal representation and $\pi$. 

A key new ingredient in the proof of Theorem \ref{thm fully fiathful functor} is a construction of extensions between two irreducible representations. This differs from the approach used in \cite{Ch22}, although we also need a basic case (when $\pi$ is also a segment representation) from \cite{Ch22}. The main idea comes from a study of first extensions in the graded Hecke algebra case in \cite{Ch18}. Roughly speaking, those extensions for two non-isomorphic representations come from Zelevinsky standard modules, and those for two isomorphic representations reduce to the tempered case. However, we remark that we do not have a concrete classification for indecomposable modules of length $2$.

For the self-extension case, we actually have more general statement:

\begin{theorem} (=Theorem \ref{thm indecomposable length 2}) \label{thm preserve self extensions}
Let $\pi_1$ and $\pi_2$ be irreducible representations of $G_k$ and $G_l$ respectively such that $\pi_1 \times \pi_2$ is still irreducible. Suppose $\lambda$ is an indecomposable representation of length $2$ with both simple composition factors isomorphic to $\pi_2$. Then $\pi_1 \times \lambda$ is also indecomposable.
\end{theorem}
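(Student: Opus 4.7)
The plan is to bound $\dim_{\mathbb{C}} \mathrm{End}_{G_{k+l}}(\pi_1 \times \lambda)$ strictly below $4$. Applying the exact functor $\pi_1 \times (-)$ to $0 \to \pi_2 \to \lambda \to \pi_2 \to 0$ produces $0 \to \sigma \to \pi_1 \times \lambda \to \sigma \to 0$ with $\sigma := \pi_1 \times \pi_2$ irreducible. A length-two module with both Jordan--H\"older factors isomorphic to the irreducible $\sigma$ has endomorphism ring of dimension either $2$ (indecomposable, with ring $\mathbb{C}[x]/(x^2)$) or $4$ (split, with ring $M_2(\mathbb{C})$); so a strict bound $<4$ forces indecomposability.

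By Bernstein's second adjointness,
\[
\mathrm{End}_{G_{k+l}}(\pi_1 \times \lambda) \;=\; \mathrm{Hom}_M\bigl(\pi_1 \boxtimes \lambda,\, J_{\bar P}(\pi_1 \times \lambda)\bigr),
\]
with $M = G_k \times G_l$ and $\bar P$ opposite to the standard parabolic with Levi $M$. The Bernstein--Zelevinsky geometric lemma provides an exact sequence
\[
0 \to \pi_1 \boxtimes \lambda \to J_{\bar P}(\pi_1 \times \lambda) \to K \to 0,
\]
in which the submodule is the identity Bruhat layer (coinciding with the unit of the adjunction) and $K$ is filtered by the layers indexed by non-trivial double cosets in $W_M \backslash W / W_M$. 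The long exact Hom sequence then yields
\[
\dim \mathrm{End}_{G_{k+l}}(\pi_1 \times \lambda) \;\leq\; \dim \mathrm{End}_M(\pi_1 \boxtimes \lambda) \,+\, \dim \mathrm{Hom}_M(\pi_1 \boxtimes \lambda,\, K).
\]
Since $\mathrm{End}_M(\pi_1 \boxtimes \lambda) = \mathrm{End}(\pi_1) \otimes \mathrm{End}(\lambda) \cong \mathbb{C}[x]/(x^2)$ is $2$-dimensional, the task reduces to showing $\dim \mathrm{Hom}_M(\pi_1 \boxtimes \lambda, K) \leq 1$.

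Running the same construction on $\sigma$ gives $0 \to \pi_1 \boxtimes \pi_2 \to J_{\bar P}(\sigma) \to K_0 \to 0$, and naturality of the geometric lemma applied to the defining short exact sequence of $\lambda$ produces a $3\times 3$ diagram exhibiting $K$ as an extension $0 \to K_0 \to K \to K_0 \to 0$. The irreducibility of $\sigma$ enters through
\[
\mathrm{Hom}_M(\pi_1 \boxtimes \pi_2,\, J_{\bar P}(\sigma)) \;=\; \mathrm{End}(\sigma) \;=\; \mathbb{C},
\]
so the canonical inclusion $\pi_1 \boxtimes \pi_2 \hookrightarrow J_{\bar P}(\sigma)$ already realizes this entire one-dimensional space. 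This rigidity, combined with the socle and cosocle structure of $\pi_1 \boxtimes \lambda$ (whose socle and cosocle are both copies of $\pi_1 \boxtimes \pi_2$), should be used via successive long exact Hom sequences to pin down $\dim \mathrm{Hom}_M(\pi_1 \boxtimes \lambda, K)$.

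The principal obstacle is that $\pi_1 \boxtimes \pi_2$ need not be a direct summand of $J_{\bar P}(\sigma)$ in general: a prototypical failure is $\pi_1 = \pi_2 = \rho$ a supercuspidal with $\rho \times \rho$ irreducible, in which case $J_{\bar P}(\rho \times \rho)$ is itself an indecomposable length-two module with both composition factors $\rho \boxtimes \rho$. In such situations the naive bound coming from successive LES manipulations overcounts, and one must instead track each non-identity Bruhat-cell contribution to $K$ individually. I would accomplish this by invoking the combinatorial constraints of Lapid--M\'inguez (in particular Theorem \ref{thm closed zel segment case}) to control which Langlands data can populate the socles of the individual Bruhat layers of $K$, combined with the fact that $\mathrm{Hom}_{G_l}(\pi_2,\lambda)$ and $\mathrm{Hom}_{G_l}(\lambda,\pi_2)$ are both one-dimensional. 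Once the refined estimate $\dim \mathrm{Hom}_M(\pi_1 \boxtimes \lambda, K) \leq 1$ is established, the dichotomy above forces $\dim \mathrm{End}(\pi_1 \times \lambda) = 2$ and $\pi_1 \times \lambda$ is indecomposable.
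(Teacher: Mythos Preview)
Your strategy of bounding $\dim\mathrm{End}(\pi_1\times\lambda)<4$ via second adjointness and the geometric lemma is sound in outline, but the crucial step---showing $\dim\mathrm{Hom}_M(\pi_1\boxtimes\lambda,K)\leq 1$---is left as a promise rather than an argument. You yourself identify the obstacle (the identity layer need not split off), and your proposed remedy does not work: Theorem \ref{thm closed zel segment case} concerns the closedness of $\mathcal M_\pi$ under intersection--union when the inducing pieces are \emph{segment} representations, whereas the present theorem is stated for \emph{arbitrary} irreducible $\pi_1,\pi_2$ with $\pi_1\times\pi_2$ irreducible. There is no mechanism by which that combinatorial statement controls the socles of all Bruhat layers of $K$ in this generality, and for general $\pi_1,\pi_2$ the number and shape of those layers can be arbitrarily complicated. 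So as written the proposal is not a proof.

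The paper avoids this layer-by-layer analysis entirely. It first enlarges $\lambda$ (dually) to an indecomposable extension $\pi''$ of two copies of the standard module $\widetilde{\zeta(\mathfrak m_2)}$ (Lemma \ref{lem embedding indecomposable to std}), and reduces indecomposability of $\pi_1\times\pi''$ to showing $\mathrm{Hom}_{G_n}(\pi_1\times\pi'',\zeta(\mathfrak m_1+\mathfrak m_2))\cong\mathbb C$. Frobenius reciprocity against the \emph{single} standard module $\zeta(\mathfrak m_1+\mathfrak m_2)$ and a cuspidal-support argument (as in Lemma \ref{lem irreducible multisegment sum}) isolate exactly one geometric-lemma layer that can contribute; the categorical equivalence of Corollary \ref{cor equivalence of categories} then translates the remaining Hom into a computation with the Levi pieces $[\mathfrak m_i]$, and the big-derivative identity of Corollary \ref{cor big derivative multisegment} finishes it. The point is that by passing to standard modules the paper replaces your intractable filtration $K$ by a situation where only one layer survives, so the bound you are after becomes an equality you can actually compute.
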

Perhaps an interesting point of Theorem \ref{thm preserve self extensions} is that the parabolic induction does not preserve indecomposability in general. In other words, some non-trivial self-extensions can be trivialized under parabolic inductions (see Remark \ref{rmk example no preserve self ext}).

Theorem \ref{thm preserve self extensions} concerns about indecomposable modules of length $2$. Our proof relies on some constructions of those modules. One important ingredient is analogous properties in the affine highest weight category introduced by Kleshchev \cite{Kl15} (also see \cite{Ka17}), see the proofs in Section \ref{ss indecomp noniso case}. Roughly speaking such ingredient reduces to the computations of Ext-groups for tempered modules. Such Ext-groups are now better understood due to the work on discrete series by Silberger, Meyer, Opdam-Solleveld \cite{Si79, Me06, OS09} using analytic methods and by \cite{Ch16} using algebraic methods; and more general case \cite{OS13} via $R$-groups. We also refer the reader to \cite{Ch18} for more discussions.

Recent articles \cite{LM16, LM19, LM20, AL22, LM22} study the conditions of irreducibility for more general multisegments. In particular, when one of the multisegments arises from a so-called $\square$-irreducible representation, there are some precise conjectures connecting to the geometry of nilpotent orbits due to Gei\ss-Leclerc-Schr\"oer and Lapid-M\'inguez \cite{GLS11, LM19, LM20}. Thus one may hope for a version of Theorem \ref{thm fully fiathful functor} for replacing the segment case by other interesting classes of representations such as Speh, ladder or even $\square$-irreducible representations. One main problem goes back to understand the analog of the set $\mathcal M_{\pi}$ in Theorem \ref{thm closed zel segment case} and so $\mathrm{Alg}_{\pi}(G_n)$ in Theorem \ref{thm fully fiathful functor}. In \cite{GL21}, Gurevich-Lapid introduce a new class of representations parabolically induced from ladder representations, and so it is natural to ask if the extensions arising from those standard representations can be used to define a suitable analogue of $\mathrm{Alg}_{\pi}(G_n)$. 



We now consider a Jacquet functor version of above discussions. For an irreducible representation $\pi$ of $G_k$ and for an admissible representation $\tau$ of $G_n$, define
\[   \mathbb D_{\pi}(\tau) := \mathrm{Hom}_{G_{k}}(\pi, \tau_{N^-}) ,
\]
where $N^-$ is the opposite unipotent radical of the standard parabolic subgroup in $G_n$ containing $G_{k}\times G_{n-k}$. Here $\tau_{N^-}$ is viewed as a $G_k$-module via the embedding $G_k \hookrightarrow G_{k}\times G_{n-k}$ to the first factor. We shall call such $\mathbb D_{\pi}$ to be a big derivative, and $\mathbb D_{\pi}(\tau)$ has a natural $G_{n-k}$-module structure (also see Definition \ref{def big derivatives}).

The big derivative $\mathbb D_{\pi}$ is the right adjoint functor of the product functor, if we consider the functors are for the category of all smooth representations. However, this is not entirely correct if we restrict the functor to the full subcategory $\mathrm{Alg}_{\pi}(G_n)$ defined above. Nevertheless, there are some interesting cases that $\mathbb D_{\pi}$ forms an adjoint functor for $\times_{\pi, \mathrm{Alg}_{\pi}(G_n)}$. For example, the case considered in \cite{Ch22} works. In those cases, we could deduce that the big derivative is irreducible (see Lemma \ref{lem big derivative st} for a precise statement). This is consequently applied to prove:

\begin{theorem} (c.f. Theorem \ref{thm generic socle irreducible}) \label{thm generic derivative SI property}
Suppose $D=F$. Let $\pi$ be a generic irreducible representation of $G_k$. Let $\tau$ be any irreducible representation of $G_n$ such that $\mathbb D_{\pi}(\tau)\neq 0$. Then $\mathbb D_{\pi}(\tau)$ satisfies the socle irreducible property (i.e. $\mathbb D_{\pi}(\tau)$ has unique simple submodule and such simple submodule appears with multiplicity one in the Jordan-H\"older sequence of $\mathbb D_{\pi}(\tau)$). 
\end{theorem}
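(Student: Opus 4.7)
The plan is to combine the Frobenius reciprocity
\[
\mathrm{Hom}_{G_{n-k}}(\sigma, \mathbb D_\pi(\tau)) \cong \mathrm{Hom}_{G_n}(\pi \times \sigma, \tau)
\]
(Bernstein's second adjointness together with an internal Hom) with the fully-faithfulness of Theorem \ref{thm fully fiathful functor} and the length-$2$ constructions behind Proposition \ref{prop indecomp non-isomorphic} and Theorem \ref{thm indecomposable length 2}.

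The central intermediate claim I would establish is that $\mathbb D_\pi(\tau) \in \mathrm{Alg}_\pi(G_{n-k}) = \mathcal A_\pi$, i.e.\ every simple Jordan--H\"older factor $\sigma = \langle \mathfrak n \rangle$ of $\mathbb D_\pi(\tau)$ satisfies $\mathfrak n \in \mathcal M_\pi$. Since $\pi$ is generic and $D=F$, write $\pi = \langle \Delta_1 \rangle \times \cdots \times \langle \Delta_r \rangle$ with pairwise unlinked essentially square-integrable segments. Analyzing the Bernstein--Zelevinsky filtration of $\tau_{N^-}$ via the geometric lemma, the only layers $\alpha \boxtimes \beta$ that can contribute to $\mathbb D_\pi(\tau)$ are those with $\mathrm{Hom}_{G_k}(\pi,\alpha)\neq 0$; a combinatorial check using the $LC$-criterion of \cite{LM16} together with the closure property of Theorem \ref{thm closed zel segment case} then shows that the segments appearing in the corresponding second factors $\beta$ are unlinked with every $\Delta_i$, hence lie in $\mathcal M_\pi$.

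Granting the claim, any simple submodule $\sigma$ of $\mathbb D_\pi(\tau)$ satisfies $\sigma \in \mathcal M_\pi$, so $\pi \times \sigma$ is irreducible, and the adjoint surjection $\pi \times \sigma \twoheadrightarrow \tau$ forces $\pi \times \sigma \cong \tau$. Theorem \ref{thm fully fiathful functor} then determines $\sigma$ uniquely up to isomorphism, and
\[
\dim \mathrm{Hom}_{G_{n-k}}(\sigma, \mathbb D_\pi(\tau)) = \dim \mathrm{End}_{G_n}(\tau) = 1,
\]
so the socle is the single simple module $\sigma$. To upgrade to Jordan--H\"older multiplicity one, suppose for contradiction that $[\mathbb D_\pi(\tau):\sigma] \ge 2$, and let $X\subset \mathbb D_\pi(\tau)$ be a length-$2$ submodule whose quotient $X/\sigma$ is a simple factor $\sigma'$ appearing in the socle of $\mathbb D_\pi(\tau)/\sigma$. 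If $\sigma'\not\cong \sigma$, then $\pi \times X$ has cosocle $\pi \times \sigma' \not\cong \tau$, so $\mathrm{Hom}_{G_n}(\pi \times X, \tau)=0$; but the inclusion $X \hookrightarrow \mathbb D_\pi(\tau)$ adjoints to a nonzero element of this Hom, a contradiction (this case uses Proposition \ref{prop indecomp non-isomorphic} to guarantee that $\pi \times X$ remains length $2$). If $\sigma' \cong \sigma$, then $X$ is a non-split self-extension of $\sigma$ (else the socle of $\mathbb D_\pi(\tau)$ would contain two copies of $\sigma$, contradicting the multiplicity-one-as-submodule already proven), and Theorem \ref{thm indecomposable length 2} makes $\pi \times X$ an indecomposable non-split self-extension of $\tau$. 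However, the adjoint map $\pi \times X \to \tau$ of the inclusion $X \hookrightarrow \mathbb D_\pi(\tau)$ restricts non-trivially to the socle $\pi \times \sigma \cong \tau$ of $\pi \times X$ by naturality of the adjunction, contradicting the fact that the (unique up to scalar) quotient map of an indecomposable non-split self-extension of $\tau$ onto $\tau$ kills its socle. Hence $[\mathbb D_\pi(\tau):\sigma]=1$.

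The main obstacle is the intermediate claim $\mathbb D_\pi(\tau) \in \mathcal A_\pi$. This is a delicate combinatorial argument combining Bernstein--Zelevinsky's geometric lemma for $\tau_{N^-}$ with the $LC$-criterion of Lapid--M\'inguez, and it is at exactly this step that the hypotheses that $\pi$ is generic (so that its segments are pairwise unlinked) and that $D=F$ are used; the rest of the proof — reducing socle irreducibility and multiplicity one to adjunction together with the length-$2$ indecomposability results — is essentially formal once the claim is in hand.
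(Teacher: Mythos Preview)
Your central intermediate claim --- that $\mathbb D_\pi(\tau)\in\mathcal A_\pi$, i.e.\ that every simple Jordan--H\"older factor $\langle\mathfrak n\rangle$ of $\mathbb D_\pi(\tau)$ satisfies $\mathfrak n\in\mathcal M_\pi$ --- is false, and this breaks the entire strategy. Take $\pi=\mathrm{St}([0,1])$ (so in Zelevinsky coordinates $\pi=\langle\{[0],[1]\}\rangle$) and $\tau=\mathrm{St}([0,2])$. From $\mathrm{St}([0,2])_{N_2}\cong\mathrm{St}([2])\boxtimes\mathrm{St}([0,1])$ one gets $\mathbb D_\pi(\tau)=\langle[2]\rangle$. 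But $\{[2]\}\notin\mathcal M_\pi$: the segments $[2]$ and $[1]$ are linked, and one checks $X^{[2]}_{\{[0],[1]\}}=\{[1]\}$ while $\widetilde X^{[2]}_{\{[0],[1]\}}=\emptyset$, so $LC$ fails and $\langle[2]\rangle\times\pi$ is reducible. The paper's own Example \ref{exp big derivative example} gives another instance: with $\pi=\langle[1]\rangle$ and $\tau=\langle\{[1],[0,1]\}\rangle$, the big derivative $\mathbb D_{[1]}(\tau)$ has $\mathrm{St}([0,1])=\langle\{[0],[1]\}\rangle$ as a composition factor, and $\{[0],[1]\}\notin\mathcal M_{\langle[1]\rangle}$ since $\langle[0]\rangle\times\langle[1]\rangle$ is reducible. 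So the ``combinatorial check'' you sketch cannot succeed.

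There is a second, independent problem: even granting the false claim, your argument proves too much. Your treatment of the case $\sigma'\not\cong\sigma$ derives a contradiction from the mere existence of \emph{any} simple $\sigma'$ in the socle of $\mathbb D_\pi(\tau)/\sigma$; combined with the case $\sigma'\cong\sigma$, this forces $\mathbb D_\pi(\tau)/\sigma=0$, i.e.\ $\mathbb D_\pi(\tau)$ is irreducible. But Example \ref{exp big derivative example} shows $\mathbb D_\pi(\tau)$ can have length $2$. The paper's actual proof (Theorem \ref{thm generic socle irreducible}) is quite different: it does not attempt to place $\mathbb D_\pi(\tau)$ inside $\mathcal A_\pi$, but instead inducts on the number of segments in the generic multisegment, embeds $\tau$ into $D_{\mathfrak t}(\tau)\times\mathrm{St}(\mathfrak t)$ for $\mathfrak t=\mathfrak{mx}(\tau,\Delta^*)$, and computes $\mathbb D_\sigma$ on such products via the geometric lemma (Lemmas \ref{lem big derivatives on equal case}--\ref{lem unlinked commute derivative}). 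The fully-faithfulness of $\times_{\pi}$ enters only indirectly, through Lemma \ref{lem big derivative st}, to evaluate $\mathbb D_\Delta(\mathrm{St}(\mathfrak p))$ when all segments of $\mathfrak p$ share the same end-point.
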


When $\pi$ is a cuspidal representation, the analogous statement for Theorem \ref{thm generic derivative SI property} for the affine Hecke algebra of type A is shown in the work of Grojnowski-Vazirani \cite{GV01} by exploiting the explicit structure of a principal series. 

The irreducibility part of the socle in Theorem \ref{thm generic derivative SI property} is shown by Kang-Kashiwara-Kim-Oh \cite{KKKO15} (also see \cite{AL22}) in a greater generality on $\square$-irreducible representations. For the irreducibility part of the socle, some variants of more specific cases using Gelfand-Kazhdan method are also shown by Aizenbud-Gourevitch \cite{AG12}. 

Our emphasis on Theorem \ref{thm generic derivative SI property} is on the application of the product functor, which gives a basic case in Proposition \ref{prop multiplicity one segment case}. An advantage of this method is that one does not have to compute some internal structures of some modules. Hence, it has a higher potential for other applications such as the one in \cite{Ch22}. We shall also show how to extend the socle irreducible result to the case of generic representations (i.e. full version of Theorem \ref{thm generic derivative SI property}) in the appendix.

As an analog of the problem of studying the irreducibility of parabolic inductions, one may ask the irreducibility of big derivatives. The product functor provides a technique on such problem as shown in the article while Theorem \ref{thm generic derivative SI property} provides some concrete examples.


A more classical viewpoint on studying parabolic inductions and Jacquet functors is on the Grothendieck group of the category of smooth representations of $\mathrm{GL}_n(F)$'s, in which those functors give a Hopf algebra structure \cite{Ze80, Ze81, Ta90, Ta95, HM08}. We hope this work could emphasis on some interesting higher structures associated to parabolic inductions and Jacquet functors.

\subsection{Acknowledgment} 
The author would like to thank Erez Lapid for drawing attention to \cite[Section 7]{LM22}. The author would like to thank Max Gurevich for his inputs on Section \ref{s product with segment rep}  and discussions on \cite{GL21}. This project is supported in part by National Key Research and Development Program of China (Grant No. 2020YFA0713200),
the Research Grants Council of the Hong Kong Special Administrative Region, China (Project No: 	17305223) and NSFC grant for Excellent Young Scholar (Project No.: 12322120). The author would like to thank referees' comments on improving expositions of the article, and thank one of the referees for an input in the proof of Theorem \ref{thm intersection-union closed} that leads to a simplification. The author declares that he has no conflict of interest.

\section{Notations} \label{s notations}

\subsection{Basic notations} \label{ss basic notations}

Let $F$ be a non-Archimedean local field and let $D$ be a finite-dimensional $F$-central division algebra. Let $G_n=\mathrm{GL}_n(D)$, the general linear group over $D$. The group $G_0$ is viewed as the trivial group.  For $g \in G_n$, let $\nu(g)=|\mathrm{Nrd}(g)|_F$, where $\mathrm{Nrd}: G_n \rightarrow F^{\times}$ is the reduced norm and $|.|_F$ is the absolute value on $F$. All the representations we consider are smooth over $\mathbb C$ and we usually omit the adjectives 'smooth' and 'over $\mathbb C$'. For a representation $\pi$ of $G_n$, we write $\mathrm{deg}(\pi)$ for $n$. We shall usually not distinguish representations in the same isomorphism class. 

For a supercuspidal representation $\rho$ of $G_n$, let $s_{\rho}$ be the unique value in $\mathbb R_{>0}$ such that $\rho \times \nu^{\pm s_{\rho}}\rho$ is reducible. Set $\nu_{\rho}=\nu^{s_{\rho}}$. For $a, b\in \mathbb Z$ with $b-a\in \mathbb Z_{\geq 0}$ and a supercuspidal representation $\rho$, a {\it segment} $[a,b]_{\rho}$ is the set $\left\{ \nu_{\rho}^a\rho, \nu_{\rho}^{a+1}\rho, \ldots, \nu_{\rho}^b\rho \right\}$. We consider two segments $[a,b]_{\rho}$ and $[a',b']_{\rho'}$ are equal if $\nu_{\rho}^a\rho \cong \nu_{\rho'}^{a'}\rho'$ and $\nu_{\rho}^b\rho \cong \nu_{\rho}^{b'}\rho'$. If $\rho=1$ is the trivial representation of $G_1$, we may simply write $[a,b]$ for $[a,b]_1$. We also consider the empty set as a segment and also set $[a,a-1]_{\rho}=\emptyset$. The {\it absolute length} $l_{abs}([a,b]_{\rho})$ of a segment $[a,b]_{\rho}$ is $(b-a+1)\mathrm{deg}(\rho)$. A {\it multisegment} is a multiset of non-empty segments, and we also consider the empty set as a multisegment. For a multisegment $\mathfrak m=\left\{ \Delta_1, \ldots, \Delta_k \right\}$, define $l_{abs}(\mathfrak m)=l_{abs}(\Delta_1)+\ldots +l_{abs}(\Delta_k)$, called the 
{\it length} of $\mathfrak m$.

We introduce the following notations:
\begin{itemize}
\item $\mathrm{Irr}(G_n)=$ the set of irreducible smooth representations of $G_n$ and $\mathrm{Irr}=\cup_n \mathrm{Irr}(G_n)$; 
\item $\mathrm{Alg}(G_n)=$ the category of smooth representations of $G_n$;
\item $\mathrm{Alg}_f(G_n)=$ the full subcategory of $\mathrm{Alg}(G_n)$ of all the smooth $G_n$-representations of finite length;
\item let $\mathrm{Alg}_f$ be the set of smooth representations of some $G_n$ (in other words, it is the set of all objects in $\mathrm{Alg}_f(G_n)$ for some $n$);
\item $\mathrm{Seg}_n=$ the set of segments of absolute length $n$; and $\mathrm{Seg}=\cup_n \mathrm{Seg}_n$;
\item $\mathrm{Mult}_n=$ the set of multisegments of length $n$; and $\mathrm{Mult}=\cup_n \mathrm{Mult}_n$;
\item for $\pi \in \mathrm{Alg}_f$, $\mathrm{JH}(\pi)=$ the set of simple composition factors in $\pi$ (i.e. multiplicities are not counted);
\item ${}^{\vee}: \mathrm{Alg}(G_n) \rightarrow \mathrm{Alg}(G_n)$ is the smooth dual contravariant functor;
\item for $\Delta=[a,b]_{\rho} \in \mathrm{Seg}$, define 
\[ \Delta^{\vee}=[-b,-a]_{\rho^{\vee}} ; \]
 for $\mathfrak m=\left\{ \Delta_1, \ldots, \Delta_r \right\} \in \mathrm{Mult}$, 
\[ \mathfrak m^{\vee}=\left\{ \Delta_1^{\vee}, \ldots, \Delta_r^{\vee} \right\} \]
\item for two supercuspidal representations $\rho_1, \rho_2$, we write  $\rho_1 < \rho_2$ if there exists a positive integer $c$ such that $\rho_2 \cong \nu_{\rho_1}^c \rho_1$. We write $\rho_1 \leq \rho_2$ if either $\rho_1<\rho_2$ or $\rho_1\cong \rho_2$;
\item for a segment $\Delta=[a,b]_{\rho}$, write $a(\Delta)=\nu_{\rho}^a\rho$ and $b(\Delta)=\nu_{\rho}^b\rho$; 
\item for $\pi \in \mathrm{Irr}$, we write $\mathrm{csupp}(\pi)=\left\{ \sigma_1, \ldots, \sigma_r \right\}$ to be the unique multiset of supercuspidal representations such that $\pi$ is a composition factor of $\sigma_1 \times \ldots \times \sigma_r$;
\item for $\pi \in \mathrm{Alg}_f$, let $\mathrm{soc}(\pi)$ be the socle (i.e. maximal semisimple submodule) of $\pi$ and let $\mathrm{cosoc}(\pi)$ be the cosocle (i.e. maximal semisimple quotient) of $\pi$. 
\end{itemize}

Since we are working on representations over $\mathbb C$, we shall not distinguish cuspidal representations and supercuspidal representations.

\subsection{More notations for segments and multisegments} \label{ss intersection union}

For $\mathfrak m \in \mathrm{Mult}$, two segments $\Delta_1$ and $\Delta_2$ in $\mathfrak m$ are said to be {\it linked} if $\Delta_1 \cup \Delta_2$ is still a segment and $\Delta_1 \not\subset \Delta_2$ and $\Delta_2 \not\subset \Delta_1$. We write $\Delta_1 < \Delta_2$ if $\Delta_1$ and $\Delta_2$ are linked and $a(\Delta_1) < a(\Delta_2)$. Note $\Delta_1< \Delta_2$ automatically implies $b(\Delta_1)<b(\Delta_2)$.

A multisegment $\mathfrak m$ is said to be {\it generic} if any two segments in $\mathfrak m$ are not linked.

As in \cite{Ze80}, for $\mathfrak m, \mathfrak n \in \mathrm{Mult}_n$, we say that $\mathfrak m$ is obtained by an intersection-union process if there are two linked segments $\Delta_1, \Delta_2$ in $\mathfrak n$ such that 
\[  \mathfrak m=\left\{ \begin{array}{cc} \mathfrak n - \left\{ \Delta_1, \Delta_2 \right\}+ \left\{ \Delta_1 \cup \Delta_2, \Delta_1\cap \Delta_2 \right\} \quad & \mbox{ if $\Delta_1 \cap \Delta_2 \neq \emptyset$ } \\
                                      \mathfrak n - \left\{ \Delta_1, \Delta_2 \right\} +\left\{ \Delta_1\cup \Delta_2 \right\} \quad & \mbox{ if $\Delta_1 \cap \Delta_2 =\emptyset$ } 
																			 \end{array}  
																			\right.
\]
Here $+$ and $-$ represent the union and substraction as multisets.

Write $\mathfrak m <_Z \mathfrak n$ if $\mathfrak m$ can be obtained by a sequence of intersection-union operations from $\mathfrak n$, and write $\mathfrak m \leq_Z \mathfrak n$ if $\mathfrak m=\mathfrak n$ or $\mathfrak m <_Z \mathfrak n$. 

\subsection{Langlands and Zelevinsky classification}

For a segment $\Delta=[c,d]_{\rho}$, let $\langle \Delta \rangle$ (resp. $\mathrm{St}(\Delta)$) be the unique simple submodule (resp. quotient) of 
\[  \nu_{\rho}^c\rho \times \ldots \times \nu_{\rho}^d \rho .
\]

For any multisegment $\mathfrak m$, write the segments in $\mathfrak m$ as $\Delta_1, \ldots, \Delta_r$. We label the segments in $\mathfrak m$ such that 
\[  b(\Delta_1) \not< b(\Delta_2) \not< \ldots \not< b(\Delta_r). 
\]
Let
\[  \zeta(\mathfrak m) = \langle \Delta_1 \rangle \times \ldots \langle \Delta_r \rangle,\quad  \lambda(\mathfrak m)= \mathrm{St}(\Delta_1) \times \ldots \times \mathrm{St}(\Delta_r) .
\]
It is shown in \cite{Ze80} that there is a unique simple submodule in $\zeta(\mathfrak m)$, which will be denoted $\langle \mathfrak m\rangle$. It is independent of a choice of a labeling above. There is a one-to-one correspondence
\[  \mathrm{Mult}_n  \longleftrightarrow \mathrm{Irr}(G_n), \quad  \mathfrak m \mapsto \langle \mathfrak m \rangle  .
\]
The Zelevinsky classification is due to Zelevinsky \cite{Ze80} when $D=F$ and due to M\'inguez-S\'echerre \cite{MS13, MS14} when $D$ is general. 

On the other hand, $\lambda(\mathfrak m)$ has unique simple quotient, denoted by $\mathrm{St}(\mathfrak m)$. This gives another one-to-one correspondence 
\[  \mathrm{Mult}_n \longleftrightarrow \mathrm{Irr}(G_n), \quad \mathfrak m \mapsto\mathrm{St}(\mathfrak m) .
\]
The above correspondence in the form due to Langlands is known for $D=F$ in \cite{Ze80} and the general case follows from the local Jacquet-Langlands correspondence due to Deligne-Kazhdan-Vign\'eras \cite{BKV84} for the zero characteristic and Badulescu \cite{Ba02} for positive characteristics. Such classification also has significance in the unitary dual problem, see work of Tadi\'c, S\'echerre, Badulescu-Henniart-Lemaire-S\'echerre \cite{Ta90, Sc09, BHL10}. 

\subsection{Parabolic inductions and Jacquet functors} \label{ss parabolic induction jacquet}

For non-negative integers $n_1, \ldots, n_r$ with $n_1+\ldots+n_r=n$, let $P_{n_1, \ldots, n_r}$ be the parabolic subgroup containing the subgroup $G_{n_1}\times \ldots \times G_{n_r}$ as block diagonal matrices and all upper triangular matrices. We shall call $P_{n_1, \ldots, n_r}$ to be a standard parabolic subgroup. (Note that when $n_i$ is zero, $G_{n_i}$ is regarded as the trivial group and we may simply drop the term. We include such case for the convenience of notations later.) Let $N_{n_1, \ldots, n_r}$ be the unipotent radical of $P_{n_1, \ldots, n_r}$, and let $N_{n_1, \ldots, n_r}^-$ be the unipotent radical of the parabolic subgroup opposite to $P_{n_1, \ldots, n_r}$.

For $\pi_1 \in \mathrm{Alg}(G_{n_1})$ and $\pi_2 \in \mathrm{Alg}(G_{n_2})$, define $\pi_1 \times \pi_2$ to be 
\[  \pi_1 \times \pi_2 = \mathrm{Ind}_{P_{n_1, n_2}}^{G_{n_1+n_2}} (\pi_1\boxtimes \pi_2),
\]
the space of smooth functions from $G_{n_1+n_2}$ to $\pi_1 \boxtimes \pi_2$ satisfying 
\[f(pg)=\delta(p)^{1/2} p.f(g),\]
 where $\delta$ is the modular character of $P_{n_1, n_2}$. The $G_n$-action on $\pi_1 \times \pi_2$ is the right translation on those functions i.e. for $f \in \pi_1 \times \pi_2$,
\[   (g.f)(g')=f(g'g) . \]
 Here we consider $\pi_1\boxtimes \pi_2$ as a $P_{n_1, n_2}$-representation by the inflation. We shall simply call $\pi_1\times \pi_2$ to be a {\it product}. The product is indeed an associative operation and so there is no ambiguity in defining $\pi_1 \times \ldots \times \pi_r$.

For a parabolic subgroup $P$ of $G_n$ with the Levi decomposition $LN$, define the Jacquet functor, as a $L$-representation:
\[  \pi_{N} =\delta_{P}^{-1/2} \cdot \frac{\pi}{\langle n.x-x: x\in \pi, n \in N \rangle}, 
\]
where $\delta_P$ is the modular character of $P$.

Both parabolic inductions and Jacquet functors are exact functors. For $n_1+\ldots+n_r=n$, the parabolic induction $\pi \mapsto \mathrm{Ind}_{P_{n_1, \ldots , n_r}}^{G_n}\pi$ has the Jacquet functor $\pi \mapsto \pi_{N_{n_1, \ldots, n_r}}$ as its left adjoint functor, and has the opposite Jacquet functor $\pi \mapsto \pi_{N_{n_1, \ldots, n_r}^-}$ as its right adjoint functor.

Following \cite{LM19}, for $\pi \in \mathrm{Irr}$, $\pi$ is said to be {\it $\square$-irreducible} if $\pi \times \pi$ is irreducible. Let $\mathrm{Irr}^{\square}$ be the set of $\square$-irreducible representations. In the content of quantum affine algebras, it is called real modules, see e.g. work of Hernandez-Leclerc and Kang-Kashiwara-Kim-Oh \cite{HL13, KKKO15}. A particular class of $\square$-irreducible representations is those $\mathrm{St}(\mathfrak m)$ for a generic multisegment $\mathfrak m \in \mathrm{Mult}$.

\subsection{Geometric lemma} \label{ss geo lemma}

For $i\leq n$, we sometimes abbreviate $N_i$ for $N_{n-i, i} \subset G_n$. 

Let $\pi_1 \in \mathrm{Alg}(G_{n_1})$ and let $\pi_2 \in \mathrm{Alg}(G_{n_2})$. Let $n=n_1+n_2$. The geometric lemma \cite{BZ77} gives that $(\pi_1 \times \pi_2)_{N_i}$ admits a filtration, whose successive subquotients are of the form:
\[   \mathrm{Ind}_{P_{n_1-i_1,n_2-i_2}\times P_{i_1, i_2}}^{G_{n-i}\times G_i} ((\pi_1)_{N_{i_1}} \boxtimes (\pi_2)_{N_{i_2}})^{\phi} ,\]
where $i_1+i_2=i$ and $\phi$ sends a $G_{n_1-i_1}\times G_{i_1}\times G_{n_2-i_2}\times G_{i_2}$-representation to a $G_{n_1-i_1} \times G_{n_2-i_2}\times G_{i_1}\times G_{i_2}$-representation via the map
\[  (g_1, g_2, g_3, g_4) \mapsto (g_1, g_3, g_2, g_4) .
\]
Moreover, the bottom layer in the filtration of $(\pi_1 \times \pi_2)_{N_i}$ is when $i_1=\mathrm{min}\left\{n_1, i \right\}$ and the top layer in the filtration of $(\pi_1 \times \pi_2)_{N_i}$ is when $i_2=\mathrm{min}\left\{n_2, i \right\}$.

\subsection{Jacquet functors on segment and Steinberg representations} \label{ss jacquet segment steinberg}

Let $[a,b]_{\rho} \in \mathrm{Seg}$. Let $k=\mathrm{deg}(\rho)$. It follows from \cite[Propositions 3.4 and 9.5]{Ze80} and \cite[Proposition 3.1]{Ta90} that
\begin{align} \label{eqn jacquet on trivial repn}
      \langle [a,b]_{\rho} \rangle_{N_{ik}} \cong \langle [a,b-i]_{\rho} \rangle \boxtimes \langle [b-i+1, b]_{\rho} \rangle 
\end{align}
and
\begin{align} \label{eqn jacquet on steinberg repn}
     \mathrm{St}([a,b]_{\rho})_{N_{ik}} \cong  \mathrm{St}([a+i, b]_{\rho}) \boxtimes \mathrm{St}([a, a+i-1]_{\rho})  ,
\end{align}
and the Jacquet modules $\langle [a,b]_{\rho}\rangle_{N_j}$ and $\mathrm{St}( [a,b]_{\rho})_{N_j}$ are zero if $k$ does not divide $j$. 

We sometimes use the formulas implicitly in computing layers involving the geometric lemma.


\section{Some generalities of the product functor} \label{s general product functor}

\subsection{Product functor} \label{ss product functor}

Let $\mathcal A$ be a full Serre subcategory of $\mathrm{Alg}(G_n)$. For a fixed irreducible representation $\pi$ of $G_k$, we define the product functor 
\[ \times_{\pi, \mathcal A}: \mathcal A \rightarrow \mathrm{Alg}(G_{n+k})  \]
as
\[  \times_{\pi, \mathcal A}(\tau) =  \pi \times \tau  
\]
and, for a morphism $f$ from $\tau$ to $\tau'$ in $\mathcal A$ and $F \in \pi \times \tau$ (under the realization in Section \ref{ss parabolic induction jacquet}), 
\[ (\times_{\pi, \mathcal A}(f)(F))(g) = (\mathrm{Id}_{\pi}\boxtimes f)(F(g)) , \quad \mbox{for any $g \in G_{n+k}$ } .
\] 
Note that we do not assume $\times_{\pi, \mathcal A}$ preserves simple objects at this point.

\begin{proposition}
The functor $\times_{\pi, \mathcal A}$ is exact and faithful. 
\end{proposition}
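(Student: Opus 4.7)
The plan is to verify exactness and faithfulness separately. For exactness, the functor $\times_{\pi,\mathcal A}$ factors as $\tau \mapsto \pi \boxtimes \tau \mapsto \mathrm{Ind}_{P_{k,n}}^{G_{n+k}}(\pi \boxtimes \tau)$. External tensor with the fixed $\pi$ preserves short exact sequences on the nose, and normalized parabolic induction is exact by the standard fact already cited in Section \ref{ss parabolic induction jacquet}. So the composite is exact.

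For faithfulness, suppose $f: \tau_1 \to \tau_2$ is a morphism in $\mathcal A$ with $\pi \times f = 0$; I want to deduce $f = 0$. My preferred approach is to use the counit $\epsilon: r_{P_{k,n}} \circ i_{P_{k,n}} \to \mathrm{Id}$ of the adjunction $r_{P_{k,n}} \dashv i_{P_{k,n}}$. Applying $r_{P_{k,n}}$ to $\pi \times f$ and invoking the naturality of $\epsilon$ yields
\[ (\mathrm{Id}_\pi \boxtimes f) \circ \epsilon_{\pi \boxtimes \tau_1} \;=\; \epsilon_{\pi \boxtimes \tau_2} \circ r_{P_{k,n}}(\pi \times f) \;=\; 0. \]
It remains to know that $\epsilon_{\pi \boxtimes \tau_1}: (\pi \times \tau_1)_{N_n} \to \pi \boxtimes \tau_1$ is surjective, which is immediate from the geometric lemma of Section \ref{ss geo lemma}: the layer of the filtration of $(\pi \times \tau_1)_{N_n}$ indexed by $(i_1, i_2) = (0, n)$ is exactly $\pi \boxtimes \tau_1$, realized as the top subquotient. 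Hence $\mathrm{Id}_\pi \boxtimes f = 0$, and since $\pi \neq 0$ this forces $f = 0$.

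A quicker alternative for faithfulness, in case one prefers to avoid any mention of adjunctions, is to argue in the function model of $\pi \times \tau_1$: if some $v \in \tau_1$ has $f(v) \neq 0$ and $w \in \pi$ is any nonzero vector, choose a smooth function $\phi \in \pi \times \tau_1$ with $\phi(1) = w \otimes v$; then $(\pi \times f)(\phi)$ evaluates to $w \otimes f(v) \neq 0$ at the identity, hence is nonzero. No serious obstacle is anticipated for either route. The only point requiring a bit of care is the direction of the geometric-lemma filtration, i.e.\ identifying which end supplies the quotient $\pi \boxtimes \tau$, and that is read off directly from the description in Section \ref{ss geo lemma}.
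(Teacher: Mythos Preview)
Your proof is correct. Exactness is handled identically to the paper. For faithfulness, however, the paper takes a much shorter route: since $\times_{\pi,\mathcal A}$ is already known to be exact, it suffices to observe that it sends nonzero objects to nonzero objects (parabolic induction of a nonzero representation is nonzero); the general fact that an exact additive functor between abelian categories with this property is faithful then finishes the argument in one line, via $\mathrm{im}(F(f)) \cong F(\mathrm{im}(f))$.

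Your two approaches are more hands-on. The counit argument unwinds the adjunction explicitly and relies on identifying the top quotient of the geometric lemma filtration with the counit; this is correct but heavier machinery than needed. Your ``quicker alternative'' in the function model is essentially a direct verification that $f\neq 0$ implies $\pi\times f\neq 0$ at the level of elements, and is close in spirit to the paper's argument (it is the concrete reason why nonzero objects go to nonzero objects). The paper's formulation has the advantage of isolating a reusable categorical principle; your function-model argument has the advantage of being self-contained and not invoking any abstract lemma.
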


\begin{proof}
Exactness follows from that the parabolic induction is an exact functor. The faithfulness then follows from that the functor sends a non-zero object to a non-zero object.
\end{proof}

\subsection{Smooth dual functor} \label{ss left right induction}

In this section, we specify to $D=F$. Let $\theta: G_n \rightarrow G_n$ given by $\theta(g)=g^{-t}$, the transpose inverse of $g$. This induces a covariant auto-equivalence for $\mathrm{Alg}(G_n)$, still denoted by $\theta$. For $D=F$, it is a classical result of Gelfand-Kazhdan that $\theta(\pi)\cong \pi^{\vee}$ for any $\pi \in \mathrm{Irr}$. 

\begin{definition}
A full Serre subcategory $\mathcal A$ of $\mathrm{Alg}_f(G_n)$ is said to be {\it $\sim\ $-closed} if for any object $C$ in $\mathcal A$, $\widetilde{C}$ is still in $\mathcal A$.
\end{definition}

One main example is the category $\mathrm{Alg}_{\pi}(G_n)$, as shown in Theorem \ref{thm intersection-union closed} later. Define $\widetilde{\ }= \theta \circ {}^{\vee}$ and so it is also a contravariant functor.

\begin{proposition} \label{prop fully faithful segment}
Let $D=F$. Let $\mathcal A$ be a $\sim$-closed full subcategory of $\mathrm{Alg}_f(G_n)$. Let $\pi \in \mathrm{Irr}(G_k)$. Define the right product functor 
\[ \times^{\pi, \mathcal A}: \mathcal A \rightarrow \mathrm{Alg}(G_{n+k}), \quad \times^{\pi, \mathcal A}(\pi')=\pi' \times \pi. \] 
Then $\times_{\pi, \mathcal A}$ is fully-faithful if and only if $\times^{\pi, \mathcal A}$ is fully-faithful. 
\end{proposition}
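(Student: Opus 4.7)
The plan is to exploit the contravariant auto-equivalence $\widetilde{\ } = \theta \circ {}^\vee$ on $\mathcal A$ to relate the two functors $\times_{\pi, \mathcal A}$ and $\times^{\pi, \mathcal A}$ via a duality, and then deduce the equivalence of fully-faithfulness. Since both directions of the ``if and only if'' are symmetric (interchanging the roles of $\times_\pi$ and $\times^\pi$), it suffices to prove one implication.

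First I would record two standard facts. (i) The smooth dual commutes with parabolic induction, $(\tau_1 \times \tau_2)^\vee \cong \tau_1^\vee \times \tau_2^\vee$, while the Gelfand-Kazhdan involution $\theta$ sends the standard parabolic $P_{n_1,n_2}$ to its transpose $P_{n_2,n_1}$, so $\theta(\tau_1 \times \tau_2) \cong \theta(\tau_2) \times \theta(\tau_1)$. Combined, this yields a natural isomorphism $\widetilde{\tau_1 \times \tau_2} \cong \widetilde{\tau_2} \times \widetilde{\tau_1}$. (ii) By Gelfand-Kazhdan, $\theta(\pi) \cong \pi^\vee$ for any irreducible $\pi$, hence $\widetilde{\pi} \cong \pi$. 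Applied to our fixed $\pi \in \mathrm{Irr}(G_k)$, we get $\widetilde{\tau \times \pi} \cong \pi \times \widetilde{\tau}$ naturally in $\tau$.

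Next, since $\mathcal A$ is $\sim$-closed, the contravariant functor $\widetilde{\ }$ restricts to an auto-equivalence of $\mathcal A$, and hence induces bijections
\[
\mathrm{Hom}_{\mathcal A}(\tau_1, \tau_2) \ \cong \ \mathrm{Hom}_{\mathcal A}(\widetilde{\tau_2}, \widetilde{\tau_1})
\]
for all $\tau_1, \tau_2 \in \mathcal A$. The natural isomorphism from (ii), together with the fact that $\widetilde{\ }$ is a functor, gives a commutative square identifying the map
\[
\mathrm{Hom}_{\mathcal A}(\tau_1, \tau_2) \ \longrightarrow \ \mathrm{Hom}(\tau_1 \times \pi, \ \tau_2 \times \pi)
\]
induced by $\times^{\pi, \mathcal A}$ with the map
\[
\mathrm{Hom}_{\mathcal A}(\widetilde{\tau_2}, \widetilde{\tau_1}) \ \longrightarrow \ \mathrm{Hom}(\pi \times \widetilde{\tau_2}, \ \pi \times \widetilde{\tau_1})
\]
induced by $\times_{\pi, \mathcal A}$, via vertical $\widetilde{\ }$-bijections on both sides. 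Assuming $\times_{\pi, \mathcal A}$ is fully-faithful, the lower map is a bijection for all choices of source/target in $\mathcal A$; since $\widetilde{\tau_i} \in \mathcal A$ by $\sim$-closedness, the upper map is also a bijection, giving fully-faithfulness of $\times^{\pi, \mathcal A}$. The reverse implication is proved in exactly the same way.

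The only real point requiring care is verifying that the natural isomorphism $\widetilde{\tau \times \pi} \cong \pi \times \widetilde{\tau}$ genuinely intertwines the two functorial maps on Hom-spaces; this is a naturality check that reduces to the functoriality of ${}^\vee$, of $\theta$, and of parabolic induction, and presents no real difficulty once the isomorphism in (i)--(ii) is stated for morphisms rather than only for objects. I do not foresee any serious obstacle beyond this bookkeeping.
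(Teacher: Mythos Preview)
Your proposal is correct and follows essentially the same approach as the paper: both use the contravariant auto-equivalence $\widetilde{\ }=\theta\circ{}^\vee$, the natural isomorphism $\widetilde{\tau_1\times\tau_2}\cong\widetilde{\tau_2}\times\widetilde{\tau_1}$, and the Gelfand--Kazhdan fact $\widetilde{\pi}\cong\pi$ for irreducible $\pi$, together with $\sim$-closedness of $\mathcal A$, to identify the Hom-map induced by one product functor with that induced by the other. The paper presents this as a chain of Hom-isomorphisms while you phrase it as a commutative square with a naturality check, but the content is the same.
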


\begin{proof}
We only prove the if direction, and the only if direction can be proved similarly. We have the following isomorphisms:
\begin{align*}
   \mathrm{Hom}_{G_{n+k}}(\pi \times \pi_1', \pi \times \pi_2') 
\cong & \mathrm{Hom}_{G_{n+k}}(\widetilde{\pi \times \pi_2'}, \widetilde{\pi \times \pi_1'}) \\
\cong & \mathrm{Hom}_{G_{n+k}}(\widetilde{\pi}_2' \times \widetilde{\pi}, \widetilde{\pi}_1'\times \widetilde{\pi}) \\
\cong & \mathrm{Hom}_{G_{n+k}}(\widetilde{\pi}_2' \times \pi, \widetilde{\pi}_1' \times \pi) \\
\cong & \mathrm{Hom}_{G_n}(\widetilde{\pi}_2', \widetilde{\pi}_1') \\
\cong & \mathrm{Hom}_{G_n}(\pi_1', \pi_2')
\end{align*}
The first and last isomorphisms follow from taking the duals (and the representations are admissible). The second isomorphism follows from the compatibility between taking duals and parabolic inductions \cite[Page 173]{MW86}. The third isomorphism follows from a result of Gelfand-Kazhdan \cite[Theorem 2]{GK} (see \cite[Theorem 7.3]{BZ76}). The fourth isomorphism follows from the if direction.
\end{proof} 

\subsection{Cohomological dual functor}

Let $\mathcal H(G_n)$ be the space of compactly supported smooth $\mathbb C$-valued functions on $G_n$, viewed as a $G_n$-representation with the action given by: for $f \in \mathcal H(G_n)$, $(g.f)(g')=f(g'g)$. Let $\mathfrak R$ be a Bernstein component of $\mathrm{Alg}(G_n)$ and let $d$ be the homological dimension of $\mathfrak R$. Given a finitely-generated $G_n$-module $\pi$ in $\mathfrak R$, define
\[  \mathcal D(\pi) = \mathrm{Ext}_{G_n}^d(\pi, \mathcal H(G_n))
\]
viewed as a $G_n$-module. As shown in \cite[Page 102]{Be92} and \cite[Page 132]{SS97}, $\mathcal D$ is a contravariant exact functor. With the property that $\mathcal D^2=\mathrm{Id}$, $\mathcal D$ is a fully-faithful functor. The functor also sends a simple object to a simple object and agrees with the Aubert-Zelevinsky dual \cite{Ze80, Au95} in the Grothendieck group level, see the work of Schneider-Stuhler \cite[Proposition IV.5.2]{SS97} and Bernstein-Bezrukavnikov-Kazhdan \cite[Section 3.2]{BBK18}. Explicit algorithm for computing $\mathcal D(\pi)$ for $\pi \in \mathrm{Irr}$ is given by M\oe glin-Waldspurger \cite{MW86}.

We first recall the following result of Bernstein:

\begin{theorem}\cite[Theorem 31(4)]{Be92}  \label{thm fully faithful coh dual}
We fix Bernstein components $\mathfrak R_1$ and $\mathfrak R_2$ of $\mathrm{Alg}(G_{n_1})$ and $\mathrm{Alg}(G_{n_2})$ respectively. Let $\pi_1$ and $\pi_2$ be finitely-generated objects in $\mathfrak R_1$ and $\mathfrak R_2$ respectively. Let $\mathfrak R$ be the unique Bernstein component containing the object $\pi_1 \times \pi_2$. Then
\[  \mathcal D(\pi_1 \times \pi_2) \cong \mathcal D(\pi_2)\times \mathcal D(\pi_1). 
\]
\end{theorem}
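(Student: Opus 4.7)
The plan is to derive the identity from Bernstein's second adjointness together with a bimodule description of the Jacquet module of the regular bimodule $\mathcal H(G)$. Write $G = G_{n_1+n_2}$, let $P = P_{n_1, n_2}$ have Levi $M = G_{n_1} \times G_{n_2}$ and unipotent radical $N$, and let $\bar P$ be the opposite parabolic with unipotent radical $N^-$. A first observation I would use is that homological dimension is additive: if $d_i$ denotes the dimension of $\mathfrak R_i$, then $\mathfrak R$ has dimension $d = d_1 + d_2$, since the split rank of a cuspidal support is additive under parabolic induction.

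The first main step is to invoke Bernstein's second adjointness. Because $\mathrm{Ind}_P^G$ admits the exact right adjoint $(-)_{N^-}$, the adjunction extends to derived functors, yielding
\[
  \mathcal D(\pi_1 \times \pi_2) \;=\; \mathrm{Ext}^d_G\bigl(\mathrm{Ind}_P^G(\pi_1 \boxtimes \pi_2),\, \mathcal H(G)\bigr) \;\cong\; \mathrm{Ext}^d_M\bigl(\pi_1 \boxtimes \pi_2,\, \mathcal H(G)_{N^-}\bigr),
\]
where the Jacquet functor is applied to (say) the left $G$-action on the $(G,G)$-bimodule $\mathcal H(G)$, leaving an $(M, G)$-bimodule. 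The second step is to identify $\mathcal H(G)_{N^-}$: a bimodule version of the geometric lemma combined with the realization of $\mathcal H(G)$ as a compactly induced representation gives, up to a modular twist,
\[
  \mathcal H(G)_{N^-} \;\cong\; \mathrm{Ind}_{\bar P}^{G,\,\mathrm{right}}\bigl(\mathcal H(M)\bigr), \qquad \mathcal H(M) \;=\; \mathcal H(G_{n_1}) \boxtimes \mathcal H(G_{n_2}),
\]
as $(M, M)$-bimodules, with the residual right $G$-action realized via induction along $\bar P$. Substituting this in and invoking a K\"unneth-type factorization, together with $d = d_1 + d_2$, collapses the Ext group to $\mathcal D(\pi_1) \boxtimes \mathcal D(\pi_2)$ on the $M$-side, while the right $G$-action produces a parabolic induction along $\bar P$.

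The last step is to convert this induction along $\bar P$ into one along $P$. Conjugation by the longest Weyl element sends $\bar P$ to $P$ and swaps the two factors of $M$, turning $\mathrm{Ind}_{\bar P}^G\bigl(\mathcal D(\pi_1) \boxtimes \mathcal D(\pi_2)\bigr)$ into $\mathrm{Ind}_P^G\bigl(\mathcal D(\pi_2) \boxtimes \mathcal D(\pi_1)\bigr) = \mathcal D(\pi_2) \times \mathcal D(\pi_1)$, which is the desired identity. The hard part will be Step 2: pinning down the precise $(M, G)$-bimodule structure on $\mathcal H(G)_{N^-}$ with the correct modular characters, and justifying the K\"unneth-type collapse at the level of Ext groups (which uses flatness of the bimodules $\mathcal H(G_{n_i})$ in a suitable sense). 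The order reversal in the conclusion is the signature of the opposite parabolic entering via second adjointness, and it distinguishes $\mathcal D$ from the smooth dual $(-)^{\vee}$ treated in Proposition \ref{prop fully faithful segment}, where the order is preserved.
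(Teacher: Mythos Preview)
The paper does not supply its own proof of this theorem: it is quoted as \cite[Theorem 31(4)]{Be92} and used as a black box. So there is no ``paper's proof'' to compare against; what you have written is a sketch of Bernstein's argument rather than an alternative to anything in this article.

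Your outline is essentially the right one and tracks Bernstein's approach: second adjointness gives $\mathrm{Ext}^d_G(\mathrm{Ind}_P^G(\pi_1\boxtimes\pi_2),\mathcal H(G))\cong \mathrm{Ext}^d_M(\pi_1\boxtimes\pi_2,\mathcal H(G)_{N^-})$, the bimodule identification $\mathcal H(G)_{N^-}\cong \mathrm{Ind}_{\bar P}^{G}\mathcal H(M)$ (with the induction on the \emph{right} variable) is exactly Bernstein's structural result underlying second adjointness, and the K\"unneth step together with $d=d_1+d_2$ is correct because $\mathcal H(M)\cong \mathcal H(G_{n_1})\otimes_{\mathbb C}\mathcal H(G_{n_2})$ and projective resolutions tensor. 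One small inaccuracy: conjugation by the longest Weyl element does not send $\bar P=\bar P_{n_1,n_2}$ to $P=P_{n_1,n_2}$; rather, the block-permutation element $\left(\begin{smallmatrix} & I_{n_2}\\ I_{n_1} & \end{smallmatrix}\right)$ conjugates $\bar P_{n_1,n_2}$ to the \emph{different} standard parabolic $P_{n_2,n_1}$, carrying $\mathcal D(\pi_1)\boxtimes\mathcal D(\pi_2)$ to $\mathcal D(\pi_2)\boxtimes\mathcal D(\pi_1)$. The conclusion $\mathcal D(\pi_2)\times\mathcal D(\pi_1)$ is unaffected, but the sentence as written is slightly off.
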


We remark that the switch in the terms on the RHS comes from switching the induction between a standard parabolic subgroup and its opposite one.

\begin{corollary} \label{cor dual statement}
Let $\mathfrak R$ be a Bernstein component of $\mathrm{Alg}(G_n)$ and let $\mathfrak R_f$ be the full subcategory of $\mathfrak R$ of all objects of finite lengths. Let $\mathcal A$ be a full Serre subcategory of $\mathfrak R_f$. Let $\pi \in \mathrm{Irr}$. This gives a full subcategory $\mathcal D(\mathcal A)$ whose objects are $\mathcal D(\pi)$ for objects $\pi$ in $\mathcal A$ and morphisms $\mathcal D(f)$ for morphisms $f$ in $\mathcal A$. Then $\times_{\pi, \mathcal A}$ is a fully-faithful functor if and only if $\times^{\mathcal D(\pi), \mathcal D(\mathcal A)}$ is a fully-faithful functor. Here $\times^{\mathcal D(\pi), \mathcal D(\mathcal A)}$ is defined in Proposition \ref{prop fully faithful segment}.
\end{corollary}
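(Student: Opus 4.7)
The plan is to imitate the proof of Proposition \ref{prop fully faithful segment}, with the contravariant auto-equivalence $\mathcal D$ playing the role of the Gelfand-Kazhdan involution $\widetilde{\ }$ and the swap formula of Theorem \ref{thm fully faithful coh dual} replacing the elementary compatibility of parabolic induction with smooth duality. The argument is formally symmetric, so it suffices to prove one direction.

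For $\pi_1', \pi_2'$ in $\mathcal A$, I would chain together the isomorphisms
\begin{align*}
\mathrm{Hom}_{G_{n+k}}(\pi \times \pi_1', \pi \times \pi_2')
&\cong \mathrm{Hom}_{G_{n+k}}(\mathcal D(\pi \times \pi_2'), \mathcal D(\pi \times \pi_1')) \\
&\cong \mathrm{Hom}_{G_{n+k}}(\mathcal D(\pi_2') \times \mathcal D(\pi), \mathcal D(\pi_1') \times \mathcal D(\pi)) \\
&\cong \mathrm{Hom}_{G_n}(\mathcal D(\pi_2'), \mathcal D(\pi_1')) \\
&\cong \mathrm{Hom}_{G_n}(\pi_1', \pi_2'),
\end{align*}
where the first and last isomorphisms use that $\mathcal D$ is fully-faithful (a consequence of $\mathcal D^2=\mathrm{Id}$), the second uses Theorem \ref{thm fully faithful coh dual}, and the third uses the hypothesis that $\times^{\mathcal D(\pi), \mathcal D(\mathcal A)}$ is fully-faithful. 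Reading the chain in the opposite direction gives the converse.

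The main point to verify is that the composite isomorphism above actually coincides with the natural map $f \mapsto \mathrm{Id}_\pi \times f$ induced by the functor $\times_{\pi, \mathcal A}$, rather than being merely an abstract bijection of Hom-sets. For the $\mathcal D$-steps this naturality is automatic, since $\mathcal D$ is a functor; for the swap isomorphism in Theorem \ref{thm fully faithful coh dual} it amounts to requiring the isomorphism there to be natural in each of its two arguments, which is part of its construction in \cite{Be92}. As a preliminary, one also needs that $\mathcal D(\mathcal A)$ is a full subcategory of the finite-length part of some Bernstein component, so that $\times^{\mathcal D(\pi),\mathcal D(\mathcal A)}$ is well-defined in the sense of Proposition \ref{prop fully faithful segment}; this follows from exactness of $\mathcal D$ combined with $\mathcal D^2=\mathrm{Id}$, which together force $\mathcal D$ to preserve finite length.

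The hardest step, to the extent there is one, is confirming bifunctorial naturality of the isomorphism in Theorem \ref{thm fully faithful coh dual}; once this is granted, the corollary reduces to bookkeeping of natural transformations.
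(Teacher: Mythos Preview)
Your proposal is correct and follows essentially the same approach as the paper's own proof, which is a one-line appeal to Theorem \ref{thm fully faithful coh dual} together with the fact that $\mathcal D$ is a fully-faithful contravariant functor. You have simply spelled out the chain of isomorphisms and addressed the naturality concern explicitly, which the paper leaves implicit.
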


\begin{proof}
This follows from Theorem \ref{thm fully faithful coh dual} and that $\mathcal D$ is a fully-faithful contravariant functor. 
\end{proof}

\section{Product with a segment representation and intersection-union process} \label{s product with segment rep}

Recall that for $\pi \in \mathrm{Irr}$, $\mathcal M_{\pi}$ is the set of all multisegments $\mathfrak n$ such that for any segment $\Delta$ in $\mathfrak n$, $\langle \Delta \rangle \times \pi$ is irreducible. 

We say that $\pi \in \mathrm{Alg}_f$ is {\it SI} or {\it socle irreducible} if $\mathrm{soc}(\pi)$ is irreducible and occurs with multiplicity one in the Jordan-H\"older sequence of $\pi$.

\begin{theorem} \label{thm intersection-union closed}
Let $\pi \in \mathrm{Irr}(G_n)$. Let $\mathfrak m \in \mathcal M_{\pi}$. For any $\mathfrak n \in \mathrm{Mult}$ with $\mathfrak n \leq_Z \mathfrak m$, $\mathfrak n \in \mathcal M_{\pi}$. 
\end{theorem}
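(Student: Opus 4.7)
The plan is to reduce to a single-step intersection-union operation, then verify the matching conditions $LC$ and $RC$ of Proposition \ref{thm matching irreducibility} for the two new segments produced by the operation. Since $\leq_Z$ is generated by one-step intersection-union moves, I may assume $\mathfrak{n}$ is obtained from $\mathfrak{m}$ by replacing a pair of linked segments $\Delta^x < \Delta^y$ in $\mathfrak{m}$ by $\Delta^x \cup \Delta^y$, together with $\Delta^x \cap \Delta^y$ when the latter is non-empty. The remaining segments of $\mathfrak{n}$ are unchanged from $\mathfrak{m}$ and hence already lie in $\mathcal{M}_\pi$. Writing $\pi = \langle \mathfrak{q} \rangle$ for the Zelevinsky multisegment of $\pi$, it suffices to establish $LC(\mathfrak{q}, \Delta^x \cup \Delta^y)$, $RC(\mathfrak{q}, \Delta^x \cup \Delta^y)$, and, when applicable, the analogues for $\Delta^x \cap \Delta^y$.

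The matchings are built by combining the four functions $f^l_x, f^r_x, f^l_y, f^r_y$ guaranteed by the hypothesis $\mathfrak{m} \in \mathcal{M}_\pi$. For $LC(\mathfrak{q}, \Delta^x \cup \Delta^y)$, I would partition the source set $X^{\Delta^x \cup \Delta^y}_{\mathfrak{q}}$ according to how each $\Delta_i$ is linked to $\Delta^x$ versus $\Delta^y$, applying $f^l_x$ on the piece compatible with the $\Delta^x$-matching and $f^l_y$ on its complement. Symmetrically, $RC(\mathfrak{q}, \Delta^x \cup \Delta^y)$ arises from combining $f^r_x$ and $f^r_y$. For $\Delta^x \cap \Delta^y$ the sets $X^{\Delta^x \cap \Delta^y}_{\mathfrak{q}}$ and $\widetilde{X}^{\Delta^x \cap \Delta^y}_{\mathfrak{q}}$ compare favourably with those for one of $\Delta^x, \Delta^y$, and a modification of a single original matching will suffice.

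The main obstacle is injectivity: $f^l_x$ and $f^l_y$ need not agree on their common domain, and segments lying in overlap regions such as $Y^{\Delta^x}_{\mathfrak{q}} \cap \widetilde{X}^{\Delta^y}_{\mathfrak{q}}$, or in the exceptional subsets $\mathscr{N}$ and $\mathscr{O}$, could collide under a naive patching. This is exactly what the preparatory Lemmas \ref{lem applying right matching}--\ref{lem well defined matching 2} are engineered to control: they show the problematic subsets of $\mathfrak{q}$ must be empty, via a cycling / pigeonhole trick in which alternating application of $f^l_y$ and $f^r_x$ produces an infinite sequence of pairwise distinct segments in the finite multisegment $\mathfrak{q}$, a contradiction. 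Given these lemmas, the remaining work is the careful case analysis needed to verify that the patched matchings are injective and satisfy the required strict-inequality conditions; I expect this bookkeeping, rather than any single hard idea, to be where most of the effort is spent.
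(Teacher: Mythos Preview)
Your plan matches the paper's proof: reduce to one intersection-union step, then build matching functions for $LC$ and $RC$ of $\Delta^x\cup\Delta^y$ and $\Delta^x\cap\Delta^y$ by patching together the four given functions $f^l_x,f^l_y,f^r_x,f^r_y$, with the preparatory lemmas handling the delicate cases via the cycling argument you describe.

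Two small corrections worth flagging. First, for $\Delta^x\cap\Delta^y$ the paper does \emph{not} get away with modifying a single matching: it again splits $X^{\Delta^x\cap\Delta^y}_{\mathfrak q}$ into two pieces (according to whether $i\in X^{\Delta^x}_{\mathfrak q}$ or not) and applies $f^l_x$ on one piece and $f^l_y$ on the other, just as for the union. Second, you have the role of Lemmas \ref{lem well defiend matching} and \ref{lem well defined matching 2} slightly reversed: in the paper they are invoked to prove \emph{well-definedness} of the patched function (that the image lands in $\widetilde X^{\Delta^x\cup\Delta^y}_{\mathfrak q}$, respectively $\widetilde X^{\Delta^x\cap\Delta^y}_{\mathfrak q}$), not injectivity. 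Injectivity is handled by a direct comparison of the cases, using that the images of $f^l_x$ and $f^l_y$ on the relevant pieces can be distinguished by how $a(\cdot)$ compares to $a(\Delta^x)$. Neither point changes your overall strategy, but getting them straight will make the case analysis go through cleanly.
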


\begin{proof}
For $\pi_1, \pi_2 \in \mathrm{Alg}_f$, let $R_{\pi_1, \pi_2}$ be the normalized non-zero intertwining operator from $\pi_1 \times \pi_2$ to $\pi_2 \times \pi_1$ (see \cite[Section 2]{LM19}). By the transitivity of $\leq_Z$, we reduce to the case that $\mathfrak m \in \mathcal M_{\pi}$ is of two linked segments. Now, fix an arbitrary $\pi \in \mathrm{Irr}$, and let $\Delta_1, \Delta_2 \in \mathrm{Seg}$ such that $\langle \Delta_1 \rangle \times \pi$ and $\langle \Delta_2 \rangle \times \pi$ are irreducible. Then $(R_{\langle \Delta_2\rangle \times \pi} \times \mathrm{Id}_{\langle \Delta_1\rangle})\circ (\mathrm{Id}_{\langle \Delta_2\rangle} \times R_{\langle \Delta_1 \rangle \times \pi})$ sends $\langle \Delta_2\rangle \times \langle \Delta_1 \rangle \times \pi$ to $\pi \times \langle \Delta_2 \rangle \times \langle \Delta_1 \rangle$, and is an isomorphism:
\begin{align} \label{eqn first appear in quotient}
   \langle \Delta_2 \rangle \times \langle \Delta_1\rangle \times \pi \cong \pi \times \langle \Delta_2 \rangle \times \langle \Delta_1 \rangle.
\end{align}
By switching the labelling if necessary, we also have:
\begin{align} \label{eqn first appear in submodule}
   \langle \Delta_1 \rangle \times \langle \Delta_2 \rangle \times \pi \cong \pi \times \langle \Delta_1 \rangle \times \langle \Delta_2 \rangle .
\end{align}
Again, by switching labelling if necessary, we may and shall assume that $\langle \Delta_1+\Delta_2 \rangle$ is in the quotient of $\langle \Delta_2 \rangle \times \langle \Delta_1 \rangle$.

Let $\tau=\langle \left\{ \Delta_1, \Delta_2 \right\}\rangle \times \pi$. Let 
\[  \tau_1:=\mathrm{soc}(\langle \Delta_1\cup \Delta_2+\Delta_1\cap \Delta_2 \rangle \times \pi), \quad \tau_2:= \mathrm{cosoc}(\langle \Delta_1\cup \Delta_2+\Delta_1 \cap \Delta_2 \rangle \times \pi) .
\]
Here $\Delta_1 \cup \Delta_2+\Delta_1\cap \Delta_2$ is equal to the multisegment $\left\{ \Delta_1\cup \Delta_2, \Delta_1 \cap \Delta_2 \right\}$ if $\Delta_1\cap \Delta_2$ is non-empty and is equal to the multisegment $\left\{ \Delta_1\cup \Delta_2 \right\}$ if $\Delta_1\cap \Delta_2$ is empty.

Suppose $\langle \Delta_1\cup \Delta_2 + \Delta_1\cap \Delta_2 \rangle \times \pi$ is not irreducible to arrive a contradiction. Then, we must have $\tau_1 \not\cong \tau_2$, which follows from  $\square$-irreducibility of $\langle \Delta_1\cup \Delta_2+\Delta_1\cap \Delta_2\rangle$ (\cite[Theorem 9.7]{Ze80} for $D=F$, see \cite[Lemma 2.5]{Ta90} and \cite[Lemma 6.17]{LM16} for general) and the SI property of $\langle \Delta_1\cup \Delta_2+\Delta_1\cap \Delta_2 \rangle$ \cite[Lemma 2.8]{LM19}. Thus, we must have either $\tau_1 \not\cong \tau$ or $\tau_2 \not\cong \tau$.

We now consider two cases separately:
\begin{enumerate}
\item $\tau_1 \not\cong \tau$. Then $\tau_1$ appears in the submodule of $\langle \Delta_1\cup \Delta_2+\Delta_1\cap \Delta_2 \rangle \times \pi$ and so appears in the submodule of $\langle \Delta_2 \rangle \times \langle \Delta_1 \rangle \times \pi$. Using (\ref{eqn first appear in quotient}), we also have that $\tau_1$ is a submodule of $\pi \times \langle \Delta_2 \rangle \times \langle \Delta_1\rangle$. Since $\tau_1 \not\cong \tau$, $\tau_1$ must come from the submodule of $\pi \times \langle \Delta_1\cup \Delta_2+\Delta_1\cap \Delta_2\rangle$. This shows that 
\[   \tau_1\cong \mathrm{soc}(\pi \times \langle \Delta_1\cup \Delta_2+\Delta_1\cap \Delta_2 \rangle )\cong \mathrm{soc}(\langle \Delta_1\cup \Delta_2+\Delta_1\cap \Delta_2\rangle\times \pi) .\]
In other words, the socle and cosocle of $\pi \times \langle \Delta_1\cup \Delta_2+\Delta_1\cap \Delta_2 \rangle$ coincides by \cite[Corollary 2.4]{LM19}. By \cite[Lemma 2.8]{LM19}, we must then have that $\pi \times \langle \Delta_1\cap \Delta_2+\Delta_1\cup \Delta_2\rangle$ is irreducible.
\item $\tau_2\not\cong \tau$. The proof is similar to the previous case, but we consider quotients rather than submodules and use (\ref{eqn first appear in submodule}) rather than (\ref{eqn first appear in quotient}).
\end{enumerate}
\end{proof}




\subsection{Some explicit criteria for a multisegment in $\mathcal M_{\pi}$}
A segment $\Delta=[a,b]_{\rho}$ is said to be {\it juxtaposed} to another segment $\Delta'=[a',b']_{\rho}$ if either $b+1=a'$ or $b'+1=a$.

\begin{remark}
\begin{enumerate}
\item Let $\pi \cong \mathrm{St}(\mathfrak m)$ for $\mathfrak m \in \mathrm{Mult}$ with all segments in $\mathfrak m$ mutually unlinked. Let $\mathfrak m \in \mathrm{Mult}$ such that $\pi \cong \mathrm{St}(\mathfrak m)$. Then $\mathfrak n \in \mathcal M_{\pi}$ if and only if any segment in $\mathfrak n$ is not juxtaposed to any segment in $\mathfrak m$. (See \cite[TH\'EOR\`EME 0.1]{BLM13})
\item Let $\pi$ be a Speh representation with the corresponding multisegment $\mathfrak m$. We label the segments in $\mathfrak m=\left\{ \Delta_1, \ldots, \Delta_r \right\}$ satisfying $\Delta_1<\ldots <\Delta_r$. Then $\mathfrak n \in \mathcal M_{\pi}$ if and only if any segment $\Delta$ in $\mathfrak n$ satisfies $\Delta \not< \Delta_1$ and $\Delta_r \not< \Delta$. (See \cite[Lemma 6.5]{LM16})
\end{enumerate}
\end{remark}

\section{Indecomposability under product functor: non-isomorphic cases} \label{ss indecomp noniso case}

\subsection{Some results on irreducibility}

In this section, we recall some results on the irreducibility of parabolic inductions. Most results are from or deduced from \cite{LM16}.

\begin{lemma} \cite[Lemma 3.9]{LM16} \label{lem commut of induction}
Let $\Delta \in \mathrm{Seg}$ and let $\mathfrak m \in \mathrm{Mult}$. Then $\langle \Delta \rangle \times \langle \mathfrak m \rangle$ is irreducible if and only if $\langle \Delta\rangle \times \langle \mathfrak m \rangle \cong \langle \mathfrak m \rangle \times \langle \Delta \rangle$.  
\end{lemma}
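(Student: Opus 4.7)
The proof splits into the straightforward ``only if'' direction and the substantive ``if'' direction. For the former, the classes of $\langle \Delta \rangle \times \langle \mathfrak m \rangle$ and $\langle \mathfrak m \rangle \times \langle \Delta \rangle$ agree in the Grothendieck group of $\mathrm{Alg}_f$, since parabolic induction is commutative in $K_0$ (comparing the top and bottom layers of the geometric lemma of Section \ref{ss geo lemma}, or equivalently via the commutativity of the Hopf algebra of \cite{Ze80}). Thus if one is irreducible, both have a Jordan-H\"older series consisting of a single irreducible with multiplicity one, and so both are irreducible and mutually isomorphic.

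For the ``if'' direction, the main input I would invoke is the socle/cosocle irreducibility for products with a $\square$-irreducible representation. Since a segment is not linked to itself, Proposition \ref{thm matching irreducibility} immediately gives $\langle \Delta \rangle \in \mathrm{Irr}^{\square}$. Consequently, both $\langle \Delta \rangle \times \langle \mathfrak m \rangle$ and $\langle \mathfrak m \rangle \times \langle \Delta \rangle$ have irreducible socle and irreducible cosocle, each appearing with multiplicity one in the Jordan-H\"older series (a standard consequence of the existence and image description of the normalized intertwining operator for a $\square$-irreducible factor). Write $\sigma$ and $\tau$ for the socle and cosocle of $\pi := \langle \Delta \rangle \times \langle \mathfrak m \rangle$.

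Next, consider the normalized intertwining operator $R \colon \langle \Delta \rangle \times \langle \mathfrak m \rangle \to \langle \mathfrak m \rangle \times \langle \Delta \rangle$. Its image is irreducible and coincides with both the cosocle $\tau$ of the source and the socle of the target. Under the assumed isomorphism $\langle \Delta \rangle \times \langle \mathfrak m \rangle \cong \langle \mathfrak m \rangle \times \langle \Delta \rangle$, the two socles correspond, forcing $\sigma \cong \tau$ as composition factors of $\pi$. Since $\sigma$ has multiplicity one in $\mathrm{JH}(\pi)$, the composition $\sigma \hookrightarrow \pi \twoheadrightarrow \tau \cong \sigma$ cannot be zero (else $\sigma$ would appear at least twice in the Jordan-H\"older series), hence is an isomorphism. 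This splits $\pi \cong \sigma \oplus \ker$, and the irreducibility of the socle forces $\ker = 0$, so $\pi \cong \sigma$ is irreducible.

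The main obstacle in a self-contained treatment is the input of socle/cosocle irreducibility and the image description of $R$ when inducing with a $\square$-irreducible factor; everything else is bookkeeping. These properties are by now standard but non-trivial facts in the representation theory of $p$-adic $\mathrm{GL}_n$ (and its quantum affine analogue), and I would cite them from \cite{LM16} (see also \cite{KKKO15}) rather than reprove them here.
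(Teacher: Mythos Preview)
Your argument is correct. Note, however, that the paper does not supply its own proof of this lemma: it simply quotes \cite[Lemma~3.9]{LM16}. The proof you give---commutativity in the Grothendieck group for the easy direction, and for the converse the $\square$-irreducibility of $\langle\Delta\rangle$ together with the socle/cosocle irreducibility and multiplicity-one properties of products with a $\square$-irreducible factor, then the splitting forced by $\mathrm{soc}\cong\mathrm{cosoc}$---is precisely the standard argument that underlies the cited result in \cite{LM16} (and \cite{KKKO15}). So your approach is not different from the paper's; it is an unpacking of the reference the paper defers to.
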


\begin{proposition} \label{prop segment indecomp embed} \cite[Proposition 6.1]{LM16}
Let $\mathfrak m \in \mathrm{Mult}$ and let $\pi=\langle \mathfrak m \rangle$. Let $\mathfrak p \in \mathcal M_{\pi}$. Then 
\begin{enumerate}
\item $\langle \mathfrak p \rangle \times \pi$ is irreducible; and
\item $\zeta(\mathfrak p)\times \pi \hookrightarrow \zeta(\mathfrak p+\mathfrak m)$; and
\item $\pi \times \zeta(\mathfrak p) \hookrightarrow \zeta(\mathfrak p+\mathfrak m)$. 
\end{enumerate} 
\end{proposition}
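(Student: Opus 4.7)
The plan is to derive (1) from (2) and (3), with (2) and (3) proved by induction on $|\mathfrak p|$. The base case for (2), $\mathfrak p = \emptyset$, is just the canonical embedding $\pi = \langle \mathfrak m\rangle \hookrightarrow \zeta(\mathfrak m)$ coming from the Zelevinsky classification.

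For the inductive step of (2), label $\mathfrak p = \{\Delta^p_1, \ldots, \Delta^p_s\}$ satisfying the $\zeta$-condition, and take $\Delta := \Delta^p_s$ (the $b$-minimal segment in $\mathfrak p$); write $\mathfrak p = \mathfrak p' + \{\Delta\}$. Since $\mathfrak p \in \mathcal M_\pi$, the product $\langle \Delta\rangle \times \pi$ is irreducible, and as the unique simple submodule of $\zeta(\mathfrak m + \Delta)$ it must equal $\langle \mathfrak m + \Delta\rangle$. Hence
\[ \zeta(\mathfrak p) \times \pi = \zeta(\mathfrak p')\times \langle\Delta\rangle \times \pi \cong \zeta(\mathfrak p') \times \langle \mathfrak m + \Delta\rangle. \]
Applying the induction hypothesis to the pair $(\mathfrak p', \langle \mathfrak m + \Delta\rangle)$ then gives an embedding into $\zeta(\mathfrak p' + \mathfrak m + \Delta) = \zeta(\mathfrak p + \mathfrak m)$. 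The crucial requirement is $\mathfrak p' \in \mathcal M_{\langle \mathfrak m + \Delta\rangle}$: for each $\Delta' \in \mathfrak p'$, both matching conditions $LC(\mathfrak m + \Delta, \Delta')$ and $RC(\mathfrak m + \Delta, \Delta')$ of Proposition~\ref{thm matching irreducibility} should hold. The matchings witnessing $LC(\mathfrak m, \Delta')$ and $RC(\mathfrak m, \Delta')$ are given from $\mathfrak p \in \mathcal M_\pi$; the $b$-minimality of $\Delta$ constrains its contribution to $X^{\Delta'}_{\mathfrak m+\Delta}$ and $\widetilde X^{\Delta'}_{\mathfrak m+\Delta}$, which should permit extending the matchings by a finite case analysis. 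The argument for (3) is symmetric, taking $\Delta = \Delta^p_1$ (the $b$-maximal segment) instead and inducting from the left.

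For (1), combine the socle embedding $\langle \mathfrak p\rangle \hookrightarrow \zeta(\mathfrak p)$ with (2) and (3) to obtain two embeddings $\langle \mathfrak p\rangle \times \pi \hookrightarrow \zeta(\mathfrak p+\mathfrak m)$ and $\pi \times \langle \mathfrak p\rangle \hookrightarrow \zeta(\mathfrak p+\mathfrak m)$. Since the unique simple submodule of $\zeta(\mathfrak p+\mathfrak m)$ is $\langle\mathfrak p+\mathfrak m\rangle$, both modules have simple socle $\langle \mathfrak p+\mathfrak m\rangle$. Dualizing, and using $\mathfrak p^\vee \in \mathcal M_{\pi^\vee}$ (which follows since $\langle \Delta\rangle \times \pi$ irreducible implies by Lemma~\ref{lem commut of induction} that $\langle \Delta^\vee\rangle \times \pi^\vee$ is irreducible), the same procedure applied to $(\pi^\vee, \mathfrak p^\vee)$ shows both modules also have simple cosocle $\langle \mathfrak p+\mathfrak m\rangle$. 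Since $\langle \mathfrak p+\mathfrak m\rangle$ has multiplicity one in $\zeta(\mathfrak p+\mathfrak m)$, it has multiplicity one in each submodule. A finite-length module whose socle and cosocle are both the same simple object of multiplicity one is necessarily simple; hence $\langle\mathfrak p\rangle \times \pi \cong \pi \times \langle\mathfrak p\rangle \cong \langle\mathfrak p+\mathfrak m\rangle$, establishing (1).

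The main technical obstacle will be the inductive claim $\mathfrak p' \in \mathcal M_{\langle \mathfrak m + \Delta\rangle}$ in (2): explicitly extending the injective matching functions $f^l, f^r$ for $(\mathfrak m, \Delta')$ to matchings for $(\mathfrak m + \Delta, \Delta')$. The $b$-minimality of $\Delta$ should channel this bookkeeping into a tractable case analysis in the spirit of Lemmas~\ref{lem applying right matching}--\ref{lem well defined matching 2}, but handling segments $\Delta'$ linked to $\Delta$ (which is the only nontrivial case) may require a dedicated argument to produce the extended matching without collision.
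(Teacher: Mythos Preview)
Your inductive scheme for (2) and (3) breaks down: the claim $\mathfrak p' \in \mathcal M_{\langle \mathfrak m + \Delta\rangle}$ is false in general, so the ``main technical obstacle'' you flag is not a bookkeeping nuisance but an actual obstruction. Take $\mathfrak m=\{[0,2]\}$ and $\mathfrak p=\{[0,1],[1,2]\}$; one checks directly from Proposition~\ref{thm matching irreducibility} that both segments of $\mathfrak p$ lie in $\mathcal M_{\langle[0,2]\rangle}$. With your choice $\Delta=[0,1]$ (the $b$-minimal segment) you would need $[1,2]\in\mathcal M_{\langle\{[0,2],[0,1]\}\rangle}$. But $\langle\{[0,2],[0,1]\}\rangle=\langle[0,2]\rangle\times\langle[0,1]\rangle$ and
\[
\langle[1,2]\rangle\times\langle[0,2]\rangle\times\langle[0,1]\rangle
\cong \langle[0,2]\rangle\times\bigl(\langle[1,2]\rangle\times\langle[0,1]\rangle\bigr)
\]
is reducible since $[0,1]$ and $[1,2]$ are linked. (Equivalently, $LC(\{[0,2],[0,1]\},[1,2])$ fails: $X=\{[0,1]\}$ but no element of $\widetilde X$ is $>[0,1]$.) The symmetric choice $\Delta=[1,2]$ for (3) fails the same way.

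The point is that augmenting $\mathfrak m$ by a segment of $\mathfrak p$ can create new linkages with the remaining segments of $\mathfrak p$, destroying the matching conditions. The paper's proof avoids this entirely: instead of enlarging $\mathfrak m$, it picks a $\leq_b$-maximal $\Delta\in\mathfrak p$ and \emph{shrinks} $\mathfrak m$ to the submultisegment $\mathfrak n=\sum_{\rho'\not>b(\Delta)}\mathfrak m_{b=\rho'}$, using Lemma~\ref{lem product irreducible} to retain irreducibility of $\langle\mathfrak n\rangle\times\langle\Delta\rangle$ and hence commute $\langle\Delta\rangle$ past $\langle\mathfrak n\rangle$. The induction is then applied to the pair $(\mathfrak p-\Delta,\langle\mathfrak n\rangle)$; membership $\mathfrak p-\Delta\in\mathcal M_{\langle\mathfrak n\rangle}$ follows because restricting $\mathfrak m$ to $\mathfrak n$ only removes segments with $b$-value exceeding $b(\Delta)\geq b(\Delta')$, which cannot interfere with the matchings for any $\Delta'\in\mathfrak p-\Delta$. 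Your derivation of (1) from (2) and (3) via socle/cosocle and duality is fine once (2) and (3) are in hand.
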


\begin{lemma}  \label{lem dual of lc condition}
Let $\mathfrak m, \mathfrak p \in \mathrm{Mult}$. Then $\mathfrak p \in \mathcal M_{\langle \mathfrak m \rangle}$ if and only if $\mathfrak p^{\vee} \in \mathcal M_{\langle \mathfrak m^{\vee} \rangle}$. 
\end{lemma}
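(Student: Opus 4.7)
The plan is to reduce the lemma to the one-segment case and then apply the smooth contragredient functor. By the definition of $\mathcal M_{\langle \mathfrak m \rangle}$, the condition $\mathfrak p \in \mathcal M_{\langle \mathfrak m \rangle}$ is the conjunction, over segments $\Delta \in \mathfrak p$, of the irreducibility of $\langle \Delta \rangle \times \langle \mathfrak m \rangle$, and analogously for $\mathfrak p^{\vee}$ with $\mathfrak m^{\vee}$. Since $\Delta \mapsto \Delta^{\vee}$ is a bijection between the segments of $\mathfrak p$ and those of $\mathfrak p^{\vee}$, it suffices to prove, for a single segment $\Delta$ and a single multisegment $\mathfrak m$, the equivalence
\[
\langle \Delta \rangle \times \langle \mathfrak m \rangle \ \text{is irreducible} \iff \langle \Delta^{\vee} \rangle \times \langle \mathfrak m^{\vee} \rangle \ \text{is irreducible}.
\]

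To establish this equivalence, I would apply the contragredient $(\cdot)^{\vee}$, which is an exact contravariant auto-equivalence on finite-length admissible representations and therefore preserves and reflects irreducibility. Combined with the standard compatibility between $(\cdot)^{\vee}$ and parabolic induction (the same ingredient used in the proof of Proposition \ref{prop fully faithful segment}) together with the identifications $\langle \Delta \rangle^{\vee} \cong \langle \Delta^{\vee} \rangle$ and $\langle \mathfrak m \rangle^{\vee} \cong \langle \mathfrak m^{\vee} \rangle$ (which follow by applying $(\cdot)^{\vee}$ to the defining embedding $\langle \mathfrak m \rangle \hookrightarrow \zeta(\mathfrak m)$ and matching cuspidal supports via Zelevinsky uniqueness), this yields
\[
\bigl( \langle \Delta \rangle \times \langle \mathfrak m \rangle \bigr)^{\vee} \cong \langle \mathfrak m^{\vee} \rangle \times \langle \Delta^{\vee} \rangle.
\]
Consequently $\langle \Delta \rangle \times \langle \mathfrak m \rangle$ is irreducible if and only if $\langle \mathfrak m^{\vee} \rangle \times \langle \Delta^{\vee} \rangle$ is.

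To finish and swap the order, I would invoke Lemma \ref{lem commut of induction} applied to the segment $\Delta^{\vee}$ and the multisegment $\mathfrak m^{\vee}$: the segment-multisegment product is irreducible precisely when it is isomorphic to the product in the reversed order, so irreducibility of $\langle \mathfrak m^{\vee} \rangle \times \langle \Delta^{\vee} \rangle$ is equivalent to irreducibility of $\langle \Delta^{\vee} \rangle \times \langle \mathfrak m^{\vee} \rangle$. There is no essential obstacle in the plan; it is a routine chain of standard compatibilities, and the only piece of delicate bookkeeping is the factor reversal under $(\cdot)^{\vee}$, which is absorbed into the final use of Lemma \ref{lem commut of induction}.
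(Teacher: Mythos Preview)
Your reduction to the one-segment case and the use of the contragredient to transport irreducibility are correct in spirit, and they do prove the lemma. There is, however, a small inaccuracy in the intermediate step: for normalized parabolic induction one has $(\pi_1\times\pi_2)^{\vee}\cong \pi_1^{\vee}\times\pi_2^{\vee}$, with \emph{no} reversal of the factors (the ingredient in Proposition~\ref{prop fully faithful segment} that produces a swap is the Gelfand--Kazhdan involution $\theta$, not ${}^{\vee}$). Thus $(\langle\Delta\rangle\times\langle\mathfrak m\rangle)^{\vee}\cong \langle\Delta^{\vee}\rangle\times\langle\mathfrak m^{\vee}\rangle$ directly, and your final appeal to Lemma~\ref{lem commut of induction} is superfluous. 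The conclusion is unaffected.

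The paper's own argument is different and even shorter: since $\mathcal M_{\langle\mathfrak m\rangle}$ is characterised (via Proposition~\ref{thm matching irreducibility}) by the combinatorial conditions $LC(\mathfrak m,\Delta)$ and $RC(\mathfrak m,\Delta)$, one simply observes from the definitions of $X_{\mathfrak m}^{\Delta}$, $Y_{\mathfrak m}^{\Delta}$, $\widetilde X_{\mathfrak m}^{\Delta}$, $\widetilde Y_{\mathfrak m}^{\Delta}$ that the operation $\vee$ interchanges $LC$ with $RC$ (because $\Delta_1<\Delta_2$ iff $\Delta_2^{\vee}<\Delta_1^{\vee}$, and $(\overleftarrow{\Delta})^{\vee}<\Delta'^{\vee}$ iff $\overleftarrow{\Delta'}<\Delta$). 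So ``follows from definitions'' means the purely combinatorial check, avoiding any appeal to $\langle\mathfrak m\rangle^{\vee}\cong\langle\mathfrak m^{\vee}\rangle$ or to properties of the contragredient on induced representations. Your route trades that bookkeeping for standard representation-theoretic compatibilities; both are valid, and yours has the advantage of working uniformly without ever unpacking the matching functions.
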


\begin{proof}
This follows from definitions.
\end{proof}

\subsection{Indecomposability} \label{ss indecomposable repn}


We remark that an analogous result holds for other connected reductive groups with replacing the Zelevinsky classification by the Langlands classification (also see \cite{Ch18}). For the Langlands classification version, we remark that there is also an analogous statement for branching laws \cite{Ch21+}, with the generic case conjectured by D. Prasad \cite{Pr18} and proved in \cite{CS21} by Savin and the author.

\begin{lemma} \label{lem compare ext vanish}
Let $\mathfrak m, \mathfrak n \in \mathrm{Mult}_n$. Suppose $\mathfrak n \neq \mathfrak m$. Then 
\[  \mathrm{Ext}^i_{G_n}(\zeta(\mathfrak m^{\vee})^{\vee}, \zeta( \mathfrak n))=0
\]
for all $i$.
\end{lemma}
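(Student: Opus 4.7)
The plan is to compute the Ext-groups by combining Frobenius reciprocity with the geometric lemma. This is an instance of the standard-costandard Ext-vanishing that underlies the affine highest weight structure on $\mathrm{Alg}_f$ discussed in the introduction.

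I would first reduce to the case where $\mathfrak m$ and $\mathfrak n$ share the same cuspidal support, since otherwise $\zeta(\mathfrak m^{\vee})^{\vee}$ and $\zeta(\mathfrak n)$ lie in disjoint Bernstein blocks and all Ext-groups vanish automatically. Using the compatibility $(A\times B)^{\vee}\cong B^{\vee}\times A^{\vee}$ for admissible finite-length representations together with the explicit form of the contragredient of a segment representation, $\zeta(\mathfrak m^{\vee})^{\vee}$ is itself a parabolic induction of (Steinberg-type) segment representations, in the reverse order to the one defining $\zeta(\mathfrak m^{\vee})$.

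Write $\mathfrak n=\{\Delta_1',\ldots,\Delta_s'\}$ in the standard ordering so that $\zeta(\mathfrak n)=i_{P'}^{G_n}(\langle \Delta_1'\rangle \boxtimes\cdots\boxtimes \langle \Delta_s'\rangle)$, where $P'$ has Levi $L'=\prod_j G_{l_a(\Delta_j')}$. By first adjointness of parabolic induction,
\[
\mathrm{Ext}^i_{G_n}\bigl(\zeta(\mathfrak m^{\vee})^{\vee},\,\zeta(\mathfrak n)\bigr)\;\cong\;\mathrm{Ext}^i_{L'}\bigl(r_{P'}\zeta(\mathfrak m^{\vee})^{\vee},\;\langle \Delta_1'\rangle\boxtimes\cdots\boxtimes \langle \Delta_s'\rangle\bigr).
\]
Next, I would apply the geometric lemma (Section~\ref{ss geo lemma}) to compute $r_{P'}\zeta(\mathfrak m^{\vee})^{\vee}$. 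The resulting finite filtration has graded pieces that are parabolic inductions of tensor products of Jacquet modules of the segment representations appearing in $\zeta(\mathfrak m^{\vee})^{\vee}$. Using the explicit Jacquet module formulas for segment and Steinberg representations recalled in Section~\ref{ss jacquet segment steinberg}, each graded piece on $L'$ takes the form $\boxtimes_j \xi_j$, where each $\xi_j$ is a parabolic induction of segment representations on the factor $G_{l_a(\Delta_j')}$ -- that is, a smaller-dimensional instance of the same kind of module appearing on the left of the Ext.

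Finally, I would run an induction on $n$: since Ext on a direct-product Levi factors across factors, the contribution of each graded piece to the right-hand side reduces to Ext-vanishings of the same type on the smaller groups $G_{l_a(\Delta_j')}$. By the inductive hypothesis these vanish except when each $\xi_j$ matches $\langle \Delta_j'\rangle$ exactly. The main obstacle I anticipate is the combinatorial bookkeeping: tracking which decomposition of each segment $\Delta_i$ can produce such a layer, and showing that the ordering constraint $b(\Delta_1')\not<b(\Delta_2')\not<\cdots$ for $\zeta(\mathfrak n)$ combined with the reverse-order description of $\zeta(\mathfrak m^{\vee})^{\vee}$ forces $\mathfrak m=\mathfrak n$, contradicting the hypothesis. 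This makes every layer contribute zero, yielding the claim.
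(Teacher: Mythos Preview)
Your overall strategy---Frobenius reciprocity plus the geometric lemma plus an induction with cuspidal-support bookkeeping---is exactly the paper's strategy. The difference is in how the induction is organized, and that difference is where your sketch has a gap.

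The paper does not descend to the full Levi $L'=\prod_j G_{l_a(\Delta_j')}$ in one step. Instead it inducts on the total number of segments in $\mathfrak m$ and $\mathfrak n$, peeling off a single segment at a time: one writes $\zeta(\mathfrak n)=\langle \Delta_1'\rangle\times\zeta(\mathfrak n-\Delta_1')$, applies Frobenius reciprocity once, and then compares the carefully chosen $\Delta_1'$ (the shortest segment in $\mathfrak n$ with maximal $b$-value $\rho$) against the analogous $\Delta_1$ in $\mathfrak m$. Three cases ($\Delta_1'\subsetneq\Delta_1$, $\Delta_1\subsetneq\Delta_1'$, $\Delta_1=\Delta_1'$) are treated separately; in the first two a direct cuspidal-support count kills every layer, and in the third the unique surviving layer is $\langle \Delta_1\rangle\boxtimes(\langle\Delta_r\rangle\times\cdots\times\langle\Delta_2\rangle)$, which is again of the form $\zeta((\mathfrak m-\Delta_1)^\vee)^\vee$ on the second factor, so the induction hypothesis applies verbatim via K\"unneth.

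In your all-at-once reduction, the layers on $L'$ are indeed box-tensor products $\boxtimes_j\xi_j$ with each $\xi_j$ a product of segment representations, but the order of the factors in $\xi_j$ is the one inherited from $\zeta(\mathfrak m^\vee)^\vee$ together with an arbitrary choice of cut points on each $\Delta_i$. There is no reason this order should again be the specific one defining $\zeta(\mathfrak p_j^\vee)^\vee$ for some multisegment $\mathfrak p_j$, so your inductive hypothesis does not directly apply to $\mathrm{Ext}^\bullet(\xi_j,\langle\Delta_j'\rangle)$. Moreover, the claim that these Ext-groups vanish ``except when each $\xi_j$ matches $\langle\Delta_j'\rangle$ exactly'' is not what the lemma says: even in the matching case the self-Ext of a segment representation is nonzero in positive degrees, and conversely a layer where $\xi_j$ is a nontrivial product whose pieces partition $\Delta_j'$ in the correct order can contribute. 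The paper's one-segment-at-a-time organization, together with the specific choice of the shortest segment with maximal $b$-value, is precisely what makes the surviving layer unique and keeps it in the form required for the induction to close.
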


\begin{proof}

We shall prove by an induction on the sum of the numbers of segments in $\mathfrak m$ and $\mathfrak n$. When both $\mathfrak m$ and $\mathfrak n$ are empty sets, there is nothing to prove.

Let $\mathfrak m=\left\{ \Delta_1, \ldots, \Delta_r \right\}$ with 
\[  b(\Delta_1) \not< \ldots \not< b(\Delta_r) .\]
Similarly, let $\mathfrak n=\left\{ \Delta_1', \ldots, \Delta_s' \right\}$ with
\[  b(\Delta_1') \not< \ldots \not< b(\Delta_s') . \]

If no segment in $\mathfrak n$ satisfies $b(\Delta_i') \geq  b(\Delta_1)$, then a cuspidal support argument gives that $\mathrm{Ext}^i_{G_n}(\zeta(\mathfrak m^{\vee})^{\vee}, \zeta(\mathfrak n))=0$ for all $i$. Furthermore, if we have $\Delta_i'$ such that $b(\Delta_i')>b(\Delta_1)$, then a cuspidal support argument again gives that 
\[ \mathrm{Ext}^k_{G_n}(\zeta(\mathfrak m^{\vee})^{\vee}, \zeta(\mathfrak n))=0. \] 

Set $\rho=b(\Delta_1)$. Thus, now we consider that $b(\Delta_i')\cong \rho$ and there is no segment $\Delta_j'$ in $\mathfrak n$ satisfying $b(\Delta_j')>\rho$. Now, by relabelling if necessary (using Lemma \ref{lem commut of induction}), we may assume that $\Delta_1$ is a shortest segment in $\mathfrak m$ with $b(\Delta_1)\cong \rho$, and similarly assume that $\Delta_1'$ is a shortest segment in $\mathfrak n$ with $b(\Delta_1')\cong \rho$. We now consider the following three cases:
\begin{itemize}
\item Suppose $\Delta_1' \subsetneq \Delta_1$. Then, Frobenius reciprocity gives that:
\[ (*)\quad  \mathrm{Ext}^i_{G_n}(\langle \Delta_r \rangle \times \ldots \times \langle \Delta_1\rangle, \langle \Delta_1 \rangle \boxtimes \zeta(\mathfrak n-\Delta_1')) \cong \mathrm{Ext}^i_{G_n}((\langle \Delta_r \rangle \times \ldots \times \langle \Delta_1\rangle)_{N_{n-l_{abs}(\Delta_1')}}, \langle\Delta_1' \rangle\boxtimes \zeta(\mathfrak n-\Delta_1')) .
\]
Now one analyzes the layers from the geometric lemma on the term $(\langle \Delta_r \rangle \times \ldots \times \langle \Delta_1\rangle))_{N_{n-l_{abs}(\Delta_1')}}$ (also see Section \ref{ss jacquet segment steinberg}). One sees that no layer has the same cuspidal support as $\langle \Delta_1' \rangle \boxtimes \zeta(\mathfrak n-\Delta_1')$, and so this gives such desired Ext-vanishing by the standard argument on an action of the Bernstein center.
\item Suppose $\Delta_1 \subsetneq \Delta_1'$. Then, one uses that
\[  \mathrm{Ext}^i_{G_n}(\zeta(\mathfrak m^{\vee})^{\vee}, \zeta(\mathfrak n)) \cong \mathrm{Ext}^i_{G_n}(\zeta(\mathfrak n)^{\vee}, \zeta(\mathfrak m^{\vee})) .
\]
Now, we write $\zeta(\mathfrak m^{\vee})\cong \zeta(\mathfrak m^{\vee}-\Delta_1^{\vee})\times \langle \Delta_1^{\vee}\rangle$. One applies Frobenius reciprocity to give that:
\[ \mathrm{Ext}^i_{G_n}(\zeta(\mathfrak n)^{\vee}, \zeta(\mathfrak m^{\vee})) \cong  \mathrm{Ext}^i_{G_{n-l_{abs}(\Delta_1)}\times G_{l_{abs}(\Delta_1)}}((\langle \Delta_1'{}^{\vee}\rangle \times \ldots \times \langle \Delta_s'{}^{\vee} \rangle)_{N_{l_{abs}(\Delta_1)}}, \langle \Delta_r{}^{\vee} \rangle \times \ldots \times \langle \Delta_2{}^{\vee}\rangle \boxtimes \langle \Delta_1{}^{\vee} \rangle). 
\]
Now again analysing layers in the geometric lemma on $(\langle \Delta_1'{}^{\vee}\rangle \times \ldots \times \langle \Delta_s'{}^{\vee} \rangle)_{N_{l_{abs}(\Delta_1)}}$ (see Section \ref{ss jacquet segment steinberg} again), one can compare cuspidal supports to give Ext-vanishing.
\item Suppose $\Delta_1=\Delta_1'$. Then we apply the Frobenius reciprocity as (*). Then, again we compute the layers from the geometric lemma on the term
\[ (\langle \Delta_r \rangle \times \ldots \times \langle \Delta_1\rangle)_{N_{n-l_{abs}(\Delta_1')}}. \]
Then, a cuspidal support consideration on the $G_{l_{abs}(\Delta_1)}$ factor in $G_{l_{abs}(\Delta_1)}\times G_{n-l_{abs}(\Delta_1)}$ gives that only possible layers contributing a non-zero Ext-group take the form:
\[  \langle \Delta_1 \rangle \boxtimes (\langle \Delta_r\rangle \times \ldots \times \langle \Delta_2\rangle) .
\]
Let $G'=G_{l_{abs}(\Delta_1)}\times G_{n-l_{abs}(\Delta_1)}$. Now, by the K\"unneth formula, 
\begin{align*}
 &  \mathrm{Ext}^i_{G'}(\langle \Delta_1 \rangle \boxtimes (\langle \Delta_r\rangle \times \ldots \times \langle \Delta_2\rangle), \langle \Delta_1' \rangle \boxtimes \zeta(\mathfrak n-\Delta_1')) \\
= & \bigoplus_{k+l=i} \mathrm{Ext}^k_{G'}(\langle \Delta_1 \rangle , \langle \Delta_1' \rangle )\boxtimes  \mathrm{Ext}^l_{G'}(\langle \Delta_r \rangle \times \ldots \times \langle \Delta_2 \rangle, \zeta(\mathfrak n-\Delta_1')) 
\end{align*}
The latter term is zero by the induction, and so such layer will also give vanishing Ext-groups. Now, since all layers in the geometric lemma give vanishing Ext-groups, we again have that $\mathrm{Ext}^i_{G_n}(\zeta(\mathfrak m^{\vee})^{\vee}, \zeta(\mathfrak n))=0$ for all $i$. 
\end{itemize}

\end{proof}

\begin{lemma} \label{lem ext vanishing under zel order}
Let $\mathfrak m, \mathfrak n \in \mathrm{Mult}_n$. Suppose $\mathfrak n \not\leq_Z  \mathfrak m$. Then 
\[  \mathrm{Ext}^i_{G_n}(\langle \mathfrak m \rangle, \zeta(\mathfrak n)) =0
\]
for all $i$.
\end{lemma}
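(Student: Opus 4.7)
The plan is to induct on $\mathfrak m$ with respect to the Zelevinsky order $\leq_Z$, which is well-founded on multisegments of a fixed absolute length; if $l_a(\mathfrak m) \neq l_a(\mathfrak n)$ the Ext-groups vanish trivially, so I assume $l_a(\mathfrak m) = l_a(\mathfrak n)$ throughout. The central input I would use is the canonical surjection $\zeta(\mathfrak m^{\vee})^{\vee} \twoheadrightarrow \langle \mathfrak m \rangle$ obtained by dualizing the Zelevinsky embedding $\langle \mathfrak m^{\vee} \rangle \hookrightarrow \zeta(\mathfrak m^{\vee})$. Let $K$ denote its kernel, so that
\[ 0 \to K \to \zeta(\mathfrak m^{\vee})^{\vee} \to \langle \mathfrak m \rangle \to 0. \]
By the Zelevinsky classification, the composition factors of $\zeta(\mathfrak m^{\vee})$ are exactly $\langle \mathfrak l \rangle$ with $\mathfrak l \leq_Z \mathfrak m^{\vee}$, and $\langle \mathfrak m^{\vee} \rangle$ appears with multiplicity one. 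Since segment duality $\Delta \mapsto \Delta^{\vee}$ is compatible with intersection-union operations, $\mathfrak l \leq_Z \mathfrak m^{\vee}$ is equivalent to $\mathfrak l^{\vee} \leq_Z \mathfrak m$; passing to contragredients, the composition factors of $K$ are therefore $\langle \mathfrak m'' \rangle$ with $\mathfrak m'' <_Z \mathfrak m$.

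Applying $\mathrm{Hom}_{G_n}(-, \zeta(\mathfrak n))$ to the short exact sequence above will produce a long exact sequence
\[ \mathrm{Ext}^{i-1}_{G_n}(K, \zeta(\mathfrak n)) \to \mathrm{Ext}^i_{G_n}(\langle \mathfrak m \rangle, \zeta(\mathfrak n)) \to \mathrm{Ext}^i_{G_n}(\zeta(\mathfrak m^{\vee})^{\vee}, \zeta(\mathfrak n)). \]
The rightmost term will vanish by Lemma \ref{lem compare ext vanish}, because $\mathfrak n \not\leq_Z \mathfrak m$ forces $\mathfrak n \neq \mathfrak m$. For the leftmost term, observe that for each composition factor $\langle \mathfrak m'' \rangle$ of $K$ one has $\mathfrak m'' <_Z \mathfrak m$, and transitivity of $\leq_Z$ combined with $\mathfrak n \not\leq_Z \mathfrak m$ forces $\mathfrak n \not\leq_Z \mathfrak m''$ (else $\mathfrak n \leq_Z \mathfrak m'' <_Z \mathfrak m$ contradicts the hypothesis). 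The inductive hypothesis then yields $\mathrm{Ext}^j_{G_n}(\langle \mathfrak m'' \rangle, \zeta(\mathfrak n)) = 0$ for all $j$, and a standard devissage along a Jordan-H\"older filtration of $K$ will give $\mathrm{Ext}^j_{G_n}(K, \zeta(\mathfrak n)) = 0$; the long exact sequence then forces the desired vanishing. The base case of the induction, $\mathfrak m$ being $\leq_Z$-minimal, coincides with $\mathfrak m$ having no linked segments, in which case $\zeta(\mathfrak m^{\vee})^{\vee} = \langle \mathfrak m \rangle$ and $K = 0$, so Lemma \ref{lem compare ext vanish} applies directly.

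The key technical step I expect to need to pin down is the identification of composition factors of $K$ as strictly below $\mathfrak m$ in the Zelevinsky order; this in turn rests on the Zelevinsky theorem for composition factors of standard modules together with the elementary fact that segment duality preserves linkedness and commutes with intersections and unions, so that $\leq_Z$ is preserved under $\vee$. Once this is established, everything else is formal manipulation with the long exact sequence. An alternative approach would be to filter $\zeta(\mathfrak n)$ by its own composition factors $\langle \mathfrak n' \rangle$ with $\mathfrak n' \leq_Z \mathfrak n$ and attempt to peel these off, but the present route plugs directly into Lemma \ref{lem compare ext vanish} with no further work, which seems to be the cleanest.
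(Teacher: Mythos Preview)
Your proof is correct and follows essentially the same approach as the paper: induct on $\mathfrak m$ in the Zelevinsky order, use the short exact sequence $0 \to K \to \zeta(\mathfrak m^{\vee})^{\vee} \to \langle \mathfrak m \rangle \to 0$, kill the middle term by Lemma~\ref{lem compare ext vanish}, and kill $K$ by induction since its composition factors lie strictly below $\mathfrak m$. One minor imprecision: the composition factors of $\zeta(\mathfrak m^{\vee})$ are \emph{among} the $\langle \mathfrak l \rangle$ with $\mathfrak l \leq_Z \mathfrak m^{\vee}$, not exactly all of them, but only the inclusion is needed for your argument.
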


\begin{proof}
The basic case is that when all the segments in $\mathfrak m$ are unlinked. In such case, either $\zeta(\mathfrak n)$ does not have the same cuspidal support as $\langle \mathfrak m \rangle$; or $\mathfrak n$ is not generic. That case then follows from Lemma \ref{lem compare ext vanish}. 

We now consider that some segments in $\mathfrak m$ are unlinked. Then it admits a short exact sequence:
\[   0 \rightarrow  \omega  \rightarrow  \zeta(\mathfrak m^{\vee})^{\vee}  \rightarrow  \langle \mathfrak m \rangle \rightarrow 0 ,
\]
where $\omega$ is the kernel of the surjection. Then, the Zelevinsky theory \cite[Theorem 7.1]{Ze80} implies that any simple composition factor of $\omega$ has the associated multisegment $\mathfrak m'$ with $\mathfrak m' \leq_Z \mathfrak m$. Thus we still have $\mathfrak m' \not\leq_Z \mathfrak n$. Inductively on $\leq_Z$ (the basic case explained above), we have that:
\[  \mathrm{Ext}^i_{G_n}(\omega, \zeta(\mathfrak n) )=0
\]
for all $i$. Thus a long exact sequence argument gives that
\[  \mathrm{Ext}^i_{G_n}(\zeta(\mathfrak m^{\vee})^{\vee}, \zeta(\mathfrak n))\cong \mathrm{Ext}^i_{G_n}(\langle \mathfrak m \rangle, \zeta(\mathfrak n)) .
\]
Now the former one is zero by Lemma \ref{lem compare ext vanish} and so the latter one is also zero.
\end{proof}

For $\pi_1, \pi_2 \in \mathrm{Irr}$, we write $\pi_1 \leq_Z \pi_2$ if $\mathfrak m_1 \leq_Z \mathfrak m_2$, where $\mathfrak m_1$ and $\mathfrak m_2$ are the unique multisegments such that $\pi_1\cong \langle \mathfrak m_1 \rangle$ and $\pi_2 \cong \langle \mathfrak m_2 \rangle$.

\begin{lemma} \label{lem embedding irr to zel std}
Let $\lambda$ be a representation of $G_n$ of length $2$. Suppose $\lambda$ is indecomposable and the two simple composition factors of $\lambda$ are not isomorphic. Then either
\begin{enumerate}
\item $\lambda \hookrightarrow \zeta(\mathfrak p)$ for some multisegment $\mathfrak p$; or
\item $\lambda^{\vee} \hookrightarrow \zeta(\mathfrak p)$ for some multisegment $\mathfrak p$.
\end{enumerate}
\end{lemma}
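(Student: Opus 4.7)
The plan is to use the Ext-vanishing result of Lemma \ref{lem ext vanishing under zel order} to lift the tautological embedding $\langle \mathfrak m\rangle \hookrightarrow \zeta(\mathfrak m)$ along the short exact sequence defining $\lambda$. Concretely, write the length-two composition series as $0 \to \pi_s \to \lambda \to \pi_q \to 0$ with $\pi_s = \langle \mathfrak m_s\rangle$ and $\pi_q = \langle \mathfrak m_q\rangle$. Indecomposability together with the non-isomorphism of the two factors guarantees that $\pi_s$ is the unique non-zero proper submodule of $\lambda$ (otherwise a second simple submodule, which would have to be isomorphic to $\pi_q$, would split off). Since $\mathfrak m_s \neq \mathfrak m_q$ and $\leq_Z$ is a partial order, at least one of $\mathfrak m_q \not\leq_Z \mathfrak m_s$ or $\mathfrak m_s \not\leq_Z \mathfrak m_q$ holds, and I split the argument along this dichotomy.

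In the first case, where $\mathfrak m_q \not\leq_Z \mathfrak m_s$, I set $\mathfrak p = \mathfrak m_s$. Applying $\mathrm{Hom}_{G_n}(-, \zeta(\mathfrak p))$ to the sequence and invoking Lemma \ref{lem ext vanishing under zel order}, the vanishing $\mathrm{Ext}^1_{G_n}(\pi_q, \zeta(\mathfrak p)) = 0$ makes the restriction map $\mathrm{Hom}_{G_n}(\lambda, \zeta(\mathfrak p)) \to \mathrm{Hom}_{G_n}(\pi_s, \zeta(\mathfrak p))$ surjective. Hence the canonical embedding $\pi_s \hookrightarrow \zeta(\mathfrak m_s)$ from the Zelevinsky classification lifts to a map $f: \lambda \to \zeta(\mathfrak p)$. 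The kernel of $f$ is a submodule of $\lambda$ meeting $\pi_s$ trivially, so by the uniqueness of the simple submodule observed above, $\ker f = 0$, and conclusion (1) follows.

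In the remaining case one has $\mathfrak m_q <_Z \mathfrak m_s$ strictly, and therefore $\mathfrak m_s \not\leq_Z \mathfrak m_q$. I would then dualize to obtain $0 \to \pi_q^\vee \to \lambda^\vee \to \pi_s^\vee \to 0$, still indecomposable of length two. Using $\langle \mathfrak m\rangle^\vee \cong \langle \mathfrak m^\vee\rangle$ together with the (routine) compatibility of the intersection-union process with the involution $\vee$ on segments, the hypothesis $\mathfrak m_s \not\leq_Z \mathfrak m_q$ becomes $\mathfrak m_s^\vee \not\leq_Z \mathfrak m_q^\vee$. This reduces the second case to the first, applied to $\lambda^\vee$ with $\mathfrak p = \mathfrak m_q^\vee$, giving conclusion (2).

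The argument is largely bookkeeping once the Ext-vanishing in Lemma \ref{lem ext vanishing under zel order} is available; the only point requiring some care is the implicit use of indecomposability to pin down the unique simple submodule, which is what upgrades the nonzero lift $f$ to an honest embedding. The compatibility $\mathfrak n \leq_Z \mathfrak m \iff \mathfrak n^\vee \leq_Z \mathfrak m^\vee$ is immediate from $(\Delta_1 \cup \Delta_2)^\vee = \Delta_1^\vee \cup \Delta_2^\vee$ and $(\Delta_1 \cap \Delta_2)^\vee = \Delta_1^\vee \cap \Delta_2^\vee$, so I anticipate no serious obstacle.
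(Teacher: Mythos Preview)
Your overall strategy is the same as the paper's, and once corrected your two-case split is actually a bit cleaner than the paper's three cases (the paper treats the incomparable case separately, showing $\mathrm{Ext}^1=0$ there, whereas your argument absorbs it). However, you have the direction of the Zelevinsky order reversed when you invoke Lemma~\ref{lem ext vanishing under zel order}.

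That lemma says: if $\mathfrak n \not\leq_Z \mathfrak m$ then $\mathrm{Ext}^i_{G_n}(\langle \mathfrak m\rangle, \zeta(\mathfrak n))=0$. In your Case~1 you need $\mathrm{Ext}^1_{G_n}(\pi_q,\zeta(\mathfrak m_s))=\mathrm{Ext}^1_{G_n}(\langle \mathfrak m_q\rangle,\zeta(\mathfrak m_s))=0$, which requires the hypothesis $\mathfrak m_s \not\leq_Z \mathfrak m_q$, \emph{not} $\mathfrak m_q \not\leq_Z \mathfrak m_s$. As written, your Case~1 includes the situation $\mathfrak m_s <_Z \mathfrak m_q$, where the Ext-group need not vanish (e.g.\ take $\mathfrak m_s=\{[0,1]\}$, $\mathfrak m_q=\{[0],[1]\}$: then $\zeta(\mathfrak m_s)=\langle[0,1]\rangle$ and $\mathrm{Ext}^1(\langle\mathfrak m_q\rangle,\langle[0,1]\rangle)\neq 0$, since $\zeta(\mathfrak m_q)$ itself realizes a nonsplit extension). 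The same reversal propagates to your Case~2 after dualizing.

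The fix is simply to interchange the two case hypotheses: take Case~1 to be $\mathfrak m_s \not\leq_Z \mathfrak m_q$ (then the lift to $\zeta(\mathfrak m_s)$ goes through exactly as you wrote), and Case~2 to be $\mathfrak m_s <_Z \mathfrak m_q$, hence $\mathfrak m_q \not\leq_Z \mathfrak m_s$, and dualize. With that correction your proof is complete and matches the paper's approach.
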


\begin{proof}
Let $\pi$ be the simple quotient of $\lambda$ and let $\pi'$ be the simple submodule of $\lambda$. We consider the following three cases:
\begin{itemize}
\item Case 1: $\pi <_Z \pi'$. Let $\mathfrak p$ be the multisegment such that $\pi' \cong \langle \mathfrak p \rangle$. We have the following short exact sequence:
\[   0 \rightarrow \pi' \rightarrow \lambda \rightarrow \pi \rightarrow 0 . \]
Then applying $\mathrm{Hom}_{G_n}(., \zeta(\mathfrak p))$, we have the following long exact sequence:
\[   0 \rightarrow \mathrm{Hom}_{G_n}(\pi, \zeta(\mathfrak p)) \rightarrow \mathrm{Hom}_{G_n}(\lambda, \zeta(\mathfrak p)) \rightarrow \mathrm{Hom}_{G_n}(\pi', \zeta(\mathfrak p)) \rightarrow \mathrm{Ext}^1_{G_n}(\pi, \zeta(\mathfrak p)) .
\]
By Lemma \ref{lem ext vanishing under zel order}, the first and last terms are zero, and the third term has one-dimensional. Thus the unique map from $\lambda$ to $\zeta(\mathfrak p)$ is still non-zero when restricting to $\pi'$. Since $\pi'$ is the unique simple module, the map must then be an embedding. 
\item Case 2: $\pi' <_Z \pi$. In such case, we consider $\lambda^{\vee}$, which has simple submodule $\pi'{}^{\vee}$ and simple quotient $\pi^{\vee}$. We still have that $\pi'{}^{\vee} <_Z \pi^{\vee}$. Now the argument in Case 1 gives the embedding $\lambda^{\vee} \hookrightarrow \zeta(\mathfrak p)$ for some multisegment $\mathfrak p$. 
\item Case 3: $\pi'$ and $\pi$ are not $\leq_Z$-comparable. It suffices to prove that 
\[ \mathrm{Ext}^1_{G_n}(\pi, \pi')=0 \]
 i.e. such indecomposable $\lambda$ does not happen. To this end, let $\mathfrak p$ and $\mathfrak p'$ be the multisegments such that $\pi \cong \langle \mathfrak p \rangle$ and $\pi' \cong \langle \mathfrak p' \rangle$. We consider the following short exact sequences:
\[  0 \rightarrow \langle \mathfrak p' \rangle \rightarrow \zeta(\mathfrak p') \rightarrow \omega \rightarrow 0,
\]
where $\omega$ is the cokernel of the first injection. Then, a long exact sequence argument with Lemma \ref{lem ext vanishing under zel order} gives 
\[  \mathrm{Hom}_{G_n}(\langle \mathfrak p \rangle, \omega) \cong \mathrm{Ext}^1_{G_n}(\langle \mathfrak p \rangle, \langle \mathfrak p' \rangle) .
\]
The former one is zero since any simple composition factor $\omega'$ in $\omega$ also satisfies $ \langle \mathfrak p \rangle \not\leq_Z \omega'$. Thus the latter Ext is also zero.
\end{itemize}
\end{proof}

 Define
\[ \mathrm{Alg}_{\pi}(G_n) 
\]
to be the full subcategory of $\mathrm{Alg}_f(G_n)$ of objects, all of whose simple composition factors are isomorphic to $\langle \mathfrak m\rangle$ for some $\mathfrak m \in \mathcal M_{\pi}$. In other words, $\mathrm{Alg}_{\pi}(G_n)$ is the full Serre subcategory generated by simple objects of the form $\langle \mathfrak m \rangle$ for $\mathfrak m$ in $\mathcal M_{\pi}$.

\begin{proposition} \label{prop indecomp non-isomorphic}
Let $\lambda$ be a representation of $G_n$ of length $2$. Suppose $\lambda$ is indecomposable. Suppose the two simple composition factors of $\lambda$ are not isomorphic and both are in $\mathrm{Alg}_{\pi}(G_n)$. Then $\pi \times \lambda$ is still an indecomposable representation of length $2$.
\end{proposition}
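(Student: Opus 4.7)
The plan is to first confirm that $\pi \times \lambda$ has length $2$ with two distinct composition factors, and then use embeddings supplied by Lemma~\ref{lem embedding irr to zel std} and Proposition~\ref{prop segment indecomp embed} to realize $\pi \times \lambda$ (or its smooth dual) as a submodule of a Zelevinsky standard module. Since every such standard has an irreducible socle, this will force the socle (or cosocle) of $\pi \times \lambda$ to be irreducible, and a length-$2$ module with an irreducible socle is automatically indecomposable.

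First I would write $\pi = \langle \mathfrak m \rangle$, and let $\tau' = \langle \mathfrak q' \rangle$ and $\tau = \langle \mathfrak q \rangle$ denote the simple submodule and simple quotient of $\lambda$ respectively, so that $\mathfrak q, \mathfrak q' \in \mathcal M_\pi$ with $\mathfrak q \neq \mathfrak q'$. Proposition~\ref{prop segment indecomp embed} then identifies $\pi \times \tau' \cong \langle \mathfrak m + \mathfrak q' \rangle$ and $\pi \times \tau \cong \langle \mathfrak m + \mathfrak q \rangle$, which are non-isomorphic by the Zelevinsky classification. Exactness of parabolic induction shows that $\pi \times \lambda$ has length $2$ with these two composition factors, and that $\pi \times \tau'$ occurs as a submodule.

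Next I would split according to the three possibilities coming from Lemma~\ref{lem embedding irr to zel std} relating $\tau$ and $\tau'$ under $\leq_Z$. In the case $\tau <_Z \tau'$, Lemma~\ref{lem embedding irr to zel std} gives $\lambda \hookrightarrow \zeta(\mathfrak q')$; applying $\pi \times (-)$ and composing with Proposition~\ref{prop segment indecomp embed}(3) produces
\[ \pi \times \lambda \hookrightarrow \pi \times \zeta(\mathfrak q') \hookrightarrow \zeta(\mathfrak m + \mathfrak q'), \]
whose socle is the irreducible $\langle \mathfrak m + \mathfrak q' \rangle \cong \pi \times \tau'$, so $\pi \times \lambda$ has irreducible socle and hence is indecomposable. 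In the opposite case $\tau' <_Z \tau$, I would dualize: $\lambda^\vee$ has simple submodule $\tau^\vee \cong \langle \mathfrak q^\vee \rangle$, Lemma~\ref{lem dual of lc condition} gives $\mathfrak q^\vee \in \mathcal M_{\pi^\vee}$, and combining Lemma~\ref{lem embedding irr to zel std} applied to $\lambda^\vee$ with Proposition~\ref{prop segment indecomp embed}(2) on the other side yields
\[ (\pi \times \lambda)^\vee \cong \lambda^\vee \times \pi^\vee \hookrightarrow \zeta(\mathfrak q^\vee) \times \pi^\vee \hookrightarrow \zeta(\mathfrak q^\vee + \mathfrak m^\vee), \]
so $\pi \times \lambda$ has irreducible cosocle and hence is again indecomposable. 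The incomparable case of Lemma~\ref{lem embedding irr to zel std} cannot arise.

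The main point requiring care is the dualization step: one must remember that $(\pi \times \lambda)^\vee \cong \lambda^\vee \times \pi^\vee$ swaps the order of the product, and then invoke the correct variant, namely part (2) rather than part (3) of Proposition~\ref{prop segment indecomp embed}, together with Lemma~\ref{lem dual of lc condition} to transport the $\mathcal M_\pi$ hypothesis to $\mathcal M_{\pi^\vee}$. After that alignment, the conclusion is immediate: a length-$2$ module with two non-isomorphic simple factors and either an irreducible socle or an irreducible cosocle cannot split as a direct sum.
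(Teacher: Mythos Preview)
Your approach matches the paper's proof essentially line for line: split into the two $\leq_Z$-comparable cases via Lemma~\ref{lem embedding irr to zel std}, embed $\lambda$ (or $\lambda^\vee$) into a Zelevinsky standard module, then push through the product using Proposition~\ref{prop segment indecomp embed} to land in some $\zeta(\cdot)$ with irreducible socle.

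There is one slip in your dualization step. The smooth dual of a normalized parabolic induction does \emph{not} swap the factors: one has $(\pi \times \lambda)^\vee \cong \pi^\vee \times \lambda^\vee$, since $\mathrm{Ind}_P^G(\sigma)^\vee \cong \mathrm{Ind}_P^G(\sigma^\vee)$ for the same parabolic. (What swaps the order is $\widetilde{\ } = \theta \circ {}^\vee$ from Section~\ref{ss left right induction}, or the cohomological dual $\mathcal D$.) Consequently, in your second case you should invoke part~(3) of Proposition~\ref{prop segment indecomp embed}, exactly as the paper does, to obtain
\[
\pi^\vee \times \lambda^\vee \hookrightarrow \pi^\vee \times \zeta(\mathfrak q^\vee) \hookrightarrow \zeta(\mathfrak q^\vee + \mathfrak m^\vee),
\]
rather than part~(2). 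With this correction the argument is complete and identical to the paper's.
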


\begin{proof}
By Proposition \ref{prop segment indecomp embed}(1), we have that $\pi \times \lambda$ has length $2$. To show the indecomposability, it suffices to show that $\pi \times \lambda$ has either unique simple quotient or unique simple submodule. Let $\pi_1$ be the simple quotient of $\lambda$ and let $\pi_2$ be the simple submodule of $\lambda$. Let $\mathfrak m$ be the multisegment associated to $\pi$.

According to the proof of Lemma \ref{lem embedding irr to zel std}, we must have one of the following two cases:
\begin{itemize}
 \item Case (1): $\pi_1 <_Z \pi_2$. In such case, there exists an embedding, by Lemma \ref{lem embedding irr to zel std},  
\[   \lambda \hookrightarrow \zeta(\mathfrak p)
\]
for some multisegment $\mathfrak p$. Thus we also have an embedding:
\[  \pi \times \lambda \hookrightarrow \pi \times \zeta(\mathfrak p). \]
But the latter module embeds to $\zeta(\mathfrak m+\mathfrak p)$ by Proposition \ref{prop segment indecomp embed}(3), which has a unique submodule. Thus, $\pi \times \lambda$ also has unique submodule and so is indecomposable.
 \item Case (2): $\pi_2 <_Z \pi_1$. It suffices to show that
\[  (\pi \times \lambda)^{\vee} =\pi^{\vee} \times \lambda^{\vee}
\]
has unique simple submodule. We have the embedding, by (the proof of) Lemma \ref{lem embedding irr to zel std} again:
\begin{align} \label{eqn second embedding in indecomp}
 \lambda^{\vee} \hookrightarrow \zeta(\mathfrak q)
\end{align}
for some $\mathfrak q \in \mathrm{Mult}$. We now consider the following embeddings:
\[  \pi^{\vee} \times \lambda^{\vee} \hookrightarrow \pi^{\vee}\times \zeta(\mathfrak q)  \hookrightarrow \zeta(\mathfrak q+\mathfrak m^{\vee}) ,
\]
where the first embedding follows from (\ref{eqn second embedding in indecomp}) and the second embedding follows from Proposition \ref{prop segment indecomp embed} and Lemma \ref{lem dual of lc condition}. 
\end{itemize}

\end{proof}

\section{Some results involving the geometric lemma}

\subsection{A counting problem}

In order to give a favour of using the geometric lemma below, let us first consider the following lemma involving some counting arguments. We first define some notions.

For $\mathfrak m \in \mathrm{Mult}$ and $\Delta \in \mathrm{Seg}$, let 
\[ \mathfrak m_{\Delta} = \{\overbrace{\Delta, \ldots, \Delta}^{k \mbox{ times }} \},
\]
where $k$ is the multiplicity of $\Delta$ in $\mathfrak m$. In particular, $\mathfrak m_{\Delta}$ is a submultisegment of $\mathfrak m$. For example, if $\mathfrak m=\left\{ [1], [1,2],[1,2], [2,3], [2,3], [4] \right\}$, then $\mathfrak m_{[1,2]}=\left\{ [1,2], [1,2] \right\}$ and $\mathfrak m_{[4]}=\left\{ [4] \right\}$. 

For two segments $\Delta, \Delta'$, we write $\Delta <_b \Delta'$ if either one of the following conditions holds:
\begin{itemize}
\item $b(\Delta)< b(\Delta')$; or 
\item $b(\Delta)\cong b(\Delta')$ and $a(\Delta) \leq a(\Delta')$. 
\end{itemize}
We write $\Delta \leq_b \Delta'$ if $\Delta =\Delta'$ or $\Delta<_b \Delta'$. This defines a partial ordering on $\mathrm{Seg}$.

\begin{lemma} \label{lem combinatorial cuspidal repn}
Let $\mathfrak m \in \mathrm{Mult}$. We write $\mathfrak m=\left\{ \Delta_1, \ldots , \Delta_r \right\}$. Let $\Delta=[a,b]_{\rho}$ be a $\leq_b$-maximal element in $\mathfrak m$. For each segment $\Delta_i=[a_i,b_i]_{\rho_i}$, we write: $\Delta_i^{+}=[a_i,c_i]_{\rho_i}$ and $\Delta_i^{-}=[c_i+1, b_i]_{\rho_i}$ for some $a_i-1\leq c_i\leq b_i$. By abuse of notations, we write $A=\cup_i \Delta_i^{+} $ as a multiset of cuspidal representations. Let $k$ be the number of segments in $\mathfrak m_{\Delta}$. If
\[   A =\cup_{j=1}^k \Delta
\]
as multisets, then $c_i=b_i$ if $\Delta_i=\Delta$ and $c_i=a_i-1$ if $\Delta_i\neq \Delta$.
\end{lemma}

\begin{proof}
Note that there are $k$ copies of $b(\Delta)$ in $\cup_{j=1}^k\Delta$. Hence, we must also have $k$ copies of $b(\Delta)$ in $A$. Thus, we must have $k$-copies of $\Delta_i$ in $\mathfrak m_{b=b(\Delta)}$ such that $\Delta_i^+=\Delta_i$. We write such $k$ segments as $\Delta_{i_1}, \ldots, \Delta_{i_k}$. Now, recall that $\Delta$ is $\leq_b$-maximal from our choice, and so if one of $\Delta_{i_j} \neq \Delta$, then $\Delta_{i_j}$ contains the cuspidal representation $\nu^{-1}_{\rho}a(\Delta)$. Thus it is impossible. Hence, all $\Delta_{i_j}=\Delta$. Then a simple count gives that $A =\cup_{j=1}^k \Delta$.
\end{proof}

We now study some applications on the above Lemma \ref{lem combinatorial cuspidal repn}. For notational simplicity, for $\mathfrak m \in \mathrm{Mult}$, we set 
\[  \widetilde{\zeta}(\mathfrak m) = \zeta(\mathfrak m^{\vee})^{\vee} .\]
This coincides with the notion $\widetilde{\zeta(\mathfrak m)}$ in Section \ref{ss left right induction} when $D=F$.

\begin{lemma} \label{lem geometric lemma on standard ones}
Let $\mathfrak m_1, \mathfrak m_2 \in \mathrm{Mult}$. Let $\mathfrak m=\mathfrak m_1+\mathfrak m_2$. Let $\Delta$ be a $\leq_b$-maximal element in $\mathfrak m$. Let $n_1=l_{abs}(\mathfrak m_1)$, $n_2=l_{abs}(\mathfrak m_2)$, $i_1=n_1-l_{abs}((\mathfrak m_1)_{\Delta})$ and let $i_2=n_2-l_{abs}((\mathfrak m_2)_{\Delta})$. Let $i=i_1+i_2$. We now consider the filtration for $(\widetilde{\zeta}(\mathfrak m_1) \times \widetilde{\zeta}(\mathfrak m_2))_{N_i}$ from the geometric lemma in Section \ref{ss geo lemma}. The only layer from that filtration, which has the same cuspidal support as $\langle \mathfrak m_{\Delta} \rangle \boxtimes \langle \mathfrak m-\mathfrak m_{\Delta}\rangle$, takes the form 
\[   (*)\quad  \mathrm{Ind}^{G'}_P (\widetilde{\zeta}(\mathfrak m_1)_{N_{i_1}} \boxtimes \widetilde{\zeta}(\mathfrak m_2)_{N_{i_2}})^{\phi} ,
\]
where $G'=G_{n_1+n_2-i_1-i_2} \times G_{i_1+i_2}$ and $\phi: G_{n_1-i_1}\times G_{i_1}\times G_{n_2-i_2}\times G_{i_2} \rightarrow G_{n_1-i_1}\times G_{n_2-i_2}\times G_{i_1}\times G_{i_2}$.

Moreover, the component in $\widetilde{\zeta}(\mathfrak m_1)_{N_{i_1}}$ and $\widetilde{\zeta}(\mathfrak m_2)_{N_{i_2}}$ in (*) contributing to the factor $\langle \mathfrak m_{\Delta} \rangle \boxtimes \langle \mathfrak m -\mathfrak m_{\Delta} \rangle$ can be refined to 
\[ \langle (\mathfrak m_1)_{\Delta} \rangle \boxtimes \widetilde{\zeta}(\mathfrak m_1-(\mathfrak m_1)_{\Delta}), \quad \langle (\mathfrak m_2)_{\Delta} \rangle \boxtimes \widetilde{\zeta}(\mathfrak m_1-(\mathfrak m_2)_{\Delta} \rangle .
\]
\end{lemma}

\begin{proof}
The problem on the layer can be transferred to the counting problem by using the Jacquet functor computations in Section \ref{ss jacquet segment steinberg}. Then the lemma follows from Lemma \ref{lem combinatorial cuspidal repn}.
\end{proof}

\subsection{A direct summand computation}

\begin{lemma} \label{lem variation of cuspidal supprot}
Let $\mathfrak m \in \mathrm{Mult}$. Let $\Delta$ be a $\leq_b$-maximal element. Let $i=l_{abs}(\mathfrak m)-l_{abs}(\mathfrak m_{\Delta})$. Then the direct summand in $\langle \mathfrak m \rangle_{N_i}$ with same cuspidal support as $\langle \mathfrak m_{\Delta}\rangle \boxtimes \langle \mathfrak m-\mathfrak m_{\Delta}\rangle$ is actually isomorphic to $\langle \mathfrak m_{\Delta} \rangle \boxtimes \langle \mathfrak m-\mathfrak m_{\Delta} \rangle$. 
\end{lemma}

\begin{proof}
Let $\pi =\langle \mathfrak m \rangle$. Then, from standard results of the Zelevinsky classification \cite{Ze80}, 
\[    \pi \hookrightarrow  \langle \mathfrak m_{\Delta} \rangle \times \langle \mathfrak m -\mathfrak m_{\Delta} \rangle .
\]
This implies
\[   \pi_{N_i} \hookrightarrow (\langle \mathfrak m_{\Delta} \rangle \times \langle \mathfrak m-\mathfrak m_{\Delta} \rangle)_{N_i} .\]
Thus, any simple composition factor in $\pi_{N_i}$ appears as a composition factor in:
\[    (\tau_1 \times \tau_2) \boxtimes (\tau_3 \times \tau_4) 
\]
such that $\tau_1\boxtimes \tau_3$ is a simple composition factor in $\langle \mathfrak m_{\Delta} \rangle_{N_{i'}}$ and $\tau_2\boxtimes \tau_4$ is a simple composition factor in $\langle \mathfrak m-\mathfrak m_{\Delta} \rangle_{N_{i''}}$, where $i'+i''=i$. 

Let $k=|\mathfrak m_{\Delta}|$ and write $\Delta=[a,b]_{\rho}$. Then $\mathrm{csupp}(\tau_1) \cup \mathrm{csupp}(\tau_3)$ (union as a multiset) has $k$ number of $b(\Delta)$. Now we suppose some $b(\Delta)$ come from $\mathrm{csupp}(\tau_3)$ to obtain a contradiction. In such case, $\tau_3\boxtimes \tau_4$ also appears in $\zeta( \mathfrak m-\mathfrak m_{\Delta})_{N_{i''}}$. But the latter term can be computed from the geometric lemma again. One sees that if $\mathrm{csupp}(\tau_3)$ contains $b(\Delta)$, then it contains all the cuspidal representation in a segment $\Delta' \in \mathfrak m_{b=b(\Delta)}-\mathfrak m_{\Delta}$. Since $\Delta$ is $\leq_b$-maximal, $\Delta'$ contains $\nu_{\rho}\cdot a(\Delta)$. This gives a contradiction that $\mathrm{csupp}(\tau_1)\cup \mathrm{csupp}(\tau_3)=\mathrm{csupp}(\langle \mathfrak m_{\Delta}\rangle)$. 

We have concluded that all $b(\Delta)$ in $\mathrm{csupp}(\langle \mathfrak m_{\Delta}\rangle)$ arises from $\mathrm{csupp}(\tau_1)$. Then, we must have that $\tau_1=\langle \mathfrak m_{\Delta} \rangle$ and $i'=i$ and $i''=0$. This shows that the only layer in the geometric lemma giving the desired module is $\langle \mathfrak m_{\Delta} \rangle \boxtimes \langle \mathfrak m-\mathfrak m_{\Delta} \rangle$. This shows the lemma.
\end{proof}

\subsection{A refined computation}

\begin{lemma} \label{lem contributing factor of geometric lemma}
We use the notations in Lemma \ref{lem geometric lemma on standard ones}. We consider the filtration for $(\langle \mathfrak m_1 \rangle \times \langle \mathfrak m_2 \rangle)_{N_i}$ from the geometric lemma. The only layer from that filtration, which has the same cuspidal support as $\langle \mathfrak m_{\Delta} \rangle \boxtimes \langle \mathfrak m-\mathfrak m_{\Delta}\rangle$, takes the form 
\[   (**) \quad  \mathrm{Ind}^{G'}_P (\langle \mathfrak m_1 \rangle_{N_{i_1}} \boxtimes \langle \mathfrak m_2 \rangle_{N_{i_2}})^{\phi} .
\]
Moreover, the component in $\langle \mathfrak m_1 \rangle_{N_{i_1}}$ and $\langle \mathfrak m_2 \rangle_{N_{i_2}}$ in (**) contributing to the factor $\langle \mathfrak m_{\Delta} \rangle \boxtimes \langle \mathfrak m -\mathfrak m_{\Delta} \rangle$ can be refined to 
\[ \langle (\mathfrak m_1)_{\Delta} \rangle \boxtimes \langle \mathfrak m_1-(\mathfrak m_1)_{\Delta} \rangle, \quad \langle (\mathfrak m_2)_{\Delta} \rangle \boxtimes \langle \mathfrak m_1-(\mathfrak m_2)_{\Delta} \rangle .
\]
\end{lemma}

\begin{proof}
Note that the geometric lemma is functorial and so the first assertion follows from the corresponding one in Lemma \ref{lem geometric lemma on standard ones}. The second assertion then follows from Lemma \ref{lem variation of cuspidal supprot}.
\end{proof}

In the following applications, we shall need two modifications. One is to use Lemma \ref{lem contributing factor of geometric lemma} repeatedly while another one is to replace $\langle \mathfrak m_2 \rangle$ with an indecomposable module of length $2$. We shall avoid notation complications to give such precise statements and the meaning will become clearer when one sees the required statements in the following proofs.

\section{Constructing self-extensions}

For $\pi$ in $\mathrm{Irr}(G_n)$, we first show that self-extensions of $\pi$ can be constructed via self-extensions of its associated Zelevinsky standard module. Then we study self-extensions of Zelevinsky standard modules and show it can be reduced to a tempered case via a categorical equivalence in Corollary \ref{cor equivalence of categories}.

\subsection{Constructing extensions from $\zeta(\mathfrak m)$} \label{ss self extension}

Let $\mathfrak m \in \mathrm{Mult}$. Let $\pi=\langle \mathfrak m \rangle$. In this subsection, we explain how to construct extensions between two copies of $\langle \mathfrak m \rangle$ from extensions of two copies of $\zeta(\mathfrak m)$. One may compare with the study in \cite[Section 3]{Ch18}. We first show that one can do that by showing Lemma \ref{lem embedding ext groups} and then reinterpret the result via the Yoneda extension lemma.

\begin{lemma} \label{lem embedding ext groups}
Let $\mathfrak m \in \mathrm{Mult}_n$. Then we have a natural embedding
\[ \mathrm{Ext}^1_{G_n}(\langle \mathfrak m \rangle, \langle \mathfrak m \rangle) \hookrightarrow \mathrm{Ext}^1_{G_n}(\langle \mathfrak m \rangle, \zeta(\mathfrak m))\cong \mathrm{Ext}^1_{G_n}(\zeta(\mathfrak m), \zeta(\mathfrak m)) .
\]
\end{lemma}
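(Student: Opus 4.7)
The plan is to exploit the standard short exact sequence
\[ 0 \to \langle \mathfrak m \rangle \to \zeta(\mathfrak m) \to \omega \to 0, \]
where $\omega$ denotes the cokernel. The crucial input from Zelevinsky theory is that every simple composition factor of $\omega$ has the form $\langle \mathfrak m' \rangle$ with $\mathfrak m' <_Z \mathfrak m$ (this is the same fact already used in the proof of \leref{ext vanishing under zel order}). In particular, assuming that $\leq_Z$ is a partial order (so $\mathfrak m' <_Z \mathfrak m$ precludes $\mathfrak m \leq_Z \mathfrak m'$), \leref{ext vanishing under zel order} applies and gives $\mathrm{Ext}^i_{G_n}(\langle \mathfrak m' \rangle, \zeta(\mathfrak m))=0$ for every composition factor of $\omega$ and every $i$. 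A devissage along the composition series of $\omega$ then upgrades this to $\mathrm{Ext}^i_{G_n}(\omega, \zeta(\mathfrak m))=0$ for all $i$.

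To produce the injection on the left, I will apply $\mathrm{Hom}_{G_n}(\langle \mathfrak m\rangle, -)$ to the short exact sequence and look at the resulting long exact sequence. Since each composition factor of $\omega$ is a strictly smaller Zelevinsky module (hence not isomorphic to $\langle \mathfrak m\rangle$), one has $\mathrm{Hom}_{G_n}(\langle \mathfrak m\rangle, \omega)=0$, whence the connecting map yields the claimed embedding
\[ \mathrm{Ext}^1_{G_n}(\langle \mathfrak m\rangle, \langle \mathfrak m\rangle) \hookrightarrow \mathrm{Ext}^1_{G_n}(\langle \mathfrak m\rangle, \zeta(\mathfrak m)). \]

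For the isomorphism on the right, I would instead apply $\mathrm{Hom}_{G_n}(-,\zeta(\mathfrak m))$ to the same short exact sequence. The relevant stretch of its long exact sequence reads
\[ \mathrm{Ext}^1_{G_n}(\omega,\zeta(\mathfrak m)) \to \mathrm{Ext}^1_{G_n}(\zeta(\mathfrak m), \zeta(\mathfrak m)) \to \mathrm{Ext}^1_{G_n}(\langle \mathfrak m\rangle, \zeta(\mathfrak m)) \to \mathrm{Ext}^2_{G_n}(\omega,\zeta(\mathfrak m)). \]
The outer two terms vanish by the Ext-vanishing just established, and this forces the desired isomorphism.

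The only non-routine point is the input $\mathrm{Ext}^i_{G_n}(\omega, \zeta(\mathfrak m))=0$, which depends on the observation that $\mathfrak m' <_Z \mathfrak m$ implies $\mathfrak m \not\leq_Z \mathfrak m'$; this is the standard antisymmetry of the Zelevinsky order and is where I expect to spend an extra sentence of justification. Everything else is a straightforward bookkeeping of long exact sequences, and it is also worth remarking in passing that the embedding on the left is natural in the sense that an extension class $[\lambda] \in \mathrm{Ext}^1_{G_n}(\langle \mathfrak m\rangle, \langle\mathfrak m\rangle)$ is sent to the class of its pushout along $\langle\mathfrak m\rangle \hookrightarrow \zeta(\mathfrak m)$, which will be what matters for the applications in the next subsection.
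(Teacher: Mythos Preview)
Your argument is correct and follows essentially the same route as the paper: both use the short exact sequence $0\to\langle\mathfrak m\rangle\to\zeta(\mathfrak m)\to\omega\to 0$, apply $\mathrm{Hom}_{G_n}(-,\zeta(\mathfrak m))$ to obtain the isomorphism on the right via the vanishing $\mathrm{Ext}^i_{G_n}(\omega,\zeta(\mathfrak m))=0$, and apply $\mathrm{Hom}_{G_n}(\langle\mathfrak m\rangle,-)$ together with $\mathrm{Hom}_{G_n}(\langle\mathfrak m\rangle,\omega)=0$ to obtain the injection on the left. The only cosmetic difference is that the paper cites \leref{compare ext vanish} directly for the vanishing on $\omega$, whereas you (more precisely) invoke \leref{ext vanishing under zel order} on each composition factor $\langle\mathfrak m'\rangle$ with $\mathfrak m'<_Z\mathfrak m$ and then d\'evissage; these amount to the same thing.
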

\begin{proof}
We have 
\[ 0\rightarrow  \langle \mathfrak m \rangle \hookrightarrow \zeta(\mathfrak m) \rightarrow K \rightarrow 0 ,
\]
where $K$ is the cokernel of the first embedding.

Now, by Lemma \ref{lem compare ext vanish}, we have that, for all $i$, 
\[ \mathrm{Ext}^i_{G_n}(K, \zeta(\mathfrak m)) =0 . \]
Thus a standard long exact sequence gives that
\begin{align}\label{eqn ext iso}
  \mathrm{Ext}^i_{G_n}(\langle \mathfrak m \rangle , \zeta(\mathfrak m)) \cong \mathrm{Ext}^i_{G_n}(\zeta(\mathfrak m), \zeta(\mathfrak m)) .
\end{align}

Long exact sequence now gives that
\[ 0= \mathrm{Hom}_{G_n}(\langle \mathfrak m \rangle, K) \rightarrow \mathrm{Ext}^1_{G_n}(\langle \mathfrak m \rangle, \langle \mathfrak m \rangle ) \rightarrow \mathrm{Ext}^1_{G_n}(\langle \mathfrak m \rangle, \zeta(\mathfrak m) )  
\]
Thus, combining with the above isomorphism, 
\[  0 \rightarrow \mathrm{Ext}^1_{G_n}(\langle \mathfrak m \rangle, \langle \mathfrak m \rangle) \hookrightarrow \mathrm{Ext}^1_{G_n}(\zeta(\mathfrak m), \zeta(\mathfrak m)) 
\]
\end{proof}

We remark that the injection in Lemma \ref{lem embedding ext groups} is not an isomorphism in general. For example, if one takes $\mathfrak m=\left\{ [0], [1] \right\}$, then $\langle \mathfrak m \rangle \cong \mathrm{St}([0,1])$ and so $\mathrm{dim}~\mathrm{Ext}^1_{G_2}(\langle \mathfrak m \rangle, \langle \mathfrak m \rangle) =1$, but $\mathrm{dim}~\mathrm{Ext}^1_{G_2}(\zeta(\mathfrak m), \zeta(\mathfrak m))=2$.

We now explain Lemma \ref{lem embedding ext groups} in module language via the Yoneda extension interpretation (\cite[Ch III Theorem 9.1]{Ma63}, also see \cite[Section 6, Pages 71 and 83]{Ma63}). We can interpret an element in $\mathrm{Ext}_{G_n}^1(\langle \mathfrak m \rangle, \langle \mathfrak m \rangle)$ as a short exact sequence:
\[  0 \rightarrow \langle \mathfrak m \rangle \rightarrow \pi \rightarrow \langle \mathfrak m \rangle \rightarrow 0 .
\]
By using Lemma \ref{lem embedding ext groups}, there exist short exact sequences such that the following diagram commutes:
\[ \xymatrix{ 0 \ar[r] & \langle \mathfrak m \rangle \ar[r] \ar[d] & \pi \ar[r] \ar[d] &  \langle \mathfrak m \rangle \ar[r] \ar@{=}[d] & 0 \\ 
0 \ar[r] &\zeta(\mathfrak m ) \ar[r] \ar@{=}[d] & \pi'  \ar[r] \ar[d] & \langle \mathfrak m \rangle \ar[r] \ar[d] & 0 \\ 
0 \ar[r] & \zeta( \mathfrak m) \ar[r] & \pi'' \ar[r] & \zeta( \mathfrak m) \ar[r] &  0 }
\]
Since the leftmost and rightmost vertical maps are injections, the middle vertical maps are also injective. In other words, we obtain:

\begin{lemma} \label{lem embedding indecomposable to std}
Let $\pi$ be an indecomposable representation of $G_n$ of length two with both irreducible composition factors isomorphic to $\langle \mathfrak m \rangle$ for some $\mathfrak m\in \mathrm{Mult}_n$. Then there exists an indecomposable representation $\pi''$ of $G_n$ which 
\begin{itemize} 
\item admits a short exact sequence:
\[  0 \rightarrow \zeta(\mathfrak m) \rightarrow \pi'' \rightarrow \zeta(\mathfrak m) \rightarrow 0
\]
 and 
\item $\pi$ embeds to $\pi''$.
\end{itemize}
\end{lemma}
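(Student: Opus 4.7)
The plan is to execute the Yoneda-extension construction already sketched by the commutative diagram preceding the statement. Since $\pi$ is indecomposable of length two, its class $[\pi]$ in $\mathrm{Ext}^1_{G_n}(\langle \mathfrak m \rangle, \langle \mathfrak m \rangle)$ is nonzero. First, I would form the pushout of the sequence $0 \to \langle \mathfrak m \rangle \to \pi \to \langle \mathfrak m \rangle \to 0$ along the canonical embedding $\langle \mathfrak m \rangle \hookrightarrow \zeta(\mathfrak m)$ to obtain the middle row $0 \to \zeta(\mathfrak m) \to \pi' \to \langle \mathfrak m \rangle \to 0$, together with an injection $\pi \hookrightarrow \pi'$ (pushouts along injections yield injections of the corresponding middle terms). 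Second, I would invoke the isomorphism
\[ \mathrm{Ext}^1_{G_n}(\zeta(\mathfrak m), \zeta(\mathfrak m)) \xrightarrow{\sim} \mathrm{Ext}^1_{G_n}(\langle \mathfrak m \rangle, \zeta(\mathfrak m)) \]
from Lemma \ref{lem embedding ext groups}, which is given by pullback along $\langle \mathfrak m \rangle \hookrightarrow \zeta(\mathfrak m)$, and lift the class of $\pi'$ to a class in $\mathrm{Ext}^1_{G_n}(\zeta(\mathfrak m), \zeta(\mathfrak m))$ represented by the bottom row $0 \to \zeta(\mathfrak m) \to \pi'' \to \zeta(\mathfrak m) \to 0$. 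By the pullback interpretation, $\pi'$ identifies with the fibre product $\pi'' \times_{\zeta(\mathfrak m)} \langle \mathfrak m \rangle$, whose projection to $\pi''$ is injective; composing with $\pi \hookrightarrow \pi'$ produces the desired embedding $\pi \hookrightarrow \pi''$.

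It remains to confirm that $\pi''$ is indecomposable. A socle analysis is the key tool: any simple submodule $S$ of $\pi''$ either lies in $\zeta(\mathfrak m)$, forcing $S = \mathrm{soc}(\zeta(\mathfrak m)) = \langle \mathfrak m \rangle$, or intersects $\zeta(\mathfrak m)$ trivially and embeds into the quotient $\zeta(\mathfrak m)$, whose socle is again $\langle \mathfrak m \rangle$. Thus $\mathrm{soc}(\pi'')$ consists of copies of $\langle \mathfrak m \rangle$, and since the Jordan--H\"older multiplicity of $\langle \mathfrak m \rangle$ in $\pi''$ is $2$ (one from each copy of $\zeta(\mathfrak m)$), this multiplicity in the socle is at most $2$.

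To rule out multiplicity $2$, suppose $L \subset \mathrm{soc}(\pi'')$ is a copy of $\langle \mathfrak m \rangle$ distinct from $\mathrm{soc}(\zeta(\mathfrak m))$. Then $L \cap \zeta(\mathfrak m) = 0$ (since $L$ is simple and $L \neq \mathrm{soc}(\zeta(\mathfrak m))$), and $L$ maps isomorphically under $\pi'' \twoheadrightarrow \zeta(\mathfrak m)$ onto $\mathrm{soc}(\zeta(\mathfrak m)) = \langle \mathfrak m \rangle$. This provides a section of the pullback extension $0 \to \zeta(\mathfrak m) \to \pi' \to \langle \mathfrak m \rangle \to 0$, forcing $\pi'$ to split. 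But the connecting injection $\mathrm{Ext}^1_{G_n}(\langle \mathfrak m \rangle, \langle \mathfrak m \rangle) \hookrightarrow \mathrm{Ext}^1_{G_n}(\langle \mathfrak m \rangle, \zeta(\mathfrak m))$ from the proof of Lemma \ref{lem embedding ext groups} sends $[\pi] \neq 0$ to $[\pi']$, so $\pi'$ is non-split, a contradiction. Hence $\mathrm{soc}(\pi'') = \langle \mathfrak m \rangle$ is simple. A finite length module with simple socle is indecomposable (any decomposition $\pi'' = A \oplus B$ with both factors nonzero would produce $\mathrm{soc}(A) \oplus \mathrm{soc}(B)$ of length at least two), proving $\pi''$ is indecomposable as required.

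The main obstacle, in my view, is precisely this upgrade from non-splitting of the extension $\pi''$ to genuine indecomposability. The construction of $\pi''$ and the injection from $\pi$ follow essentially mechanically from the Yoneda pushout--pullback formalism combined with Lemma \ref{lem embedding ext groups}, but indecomposability depends on the additional structural input that $\zeta(\mathfrak m)$ has simple socle $\langle \mathfrak m \rangle$ appearing with Jordan--H\"older multiplicity one, which is what allows the socle of $\pi''$ to have length at most two and, in combination with the non-splitting of $\pi'$, forces it to have length exactly one.
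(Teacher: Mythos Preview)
Your proposal is correct and follows essentially the same route as the paper: the construction of $\pi''$ via the Yoneda pushout--pullback diagram is identical, and both arguments establish indecomposability by showing $\mathrm{soc}(\pi'')\cong\langle\mathfrak m\rangle$ is simple. The only cosmetic difference is in how this last point is verified: the paper counts multiplicities in the cokernel of $\pi\hookrightarrow\pi''$ to conclude $\mathrm{Hom}_{G_n}(\langle\mathfrak m\rangle,\pi'')\cong\mathrm{Hom}_{G_n}(\langle\mathfrak m\rangle,\pi)\cong\mathbb C$, whereas you argue that a second socle copy would split the intermediate extension $\pi'$---both amount to the same underlying observation that $\langle\mathfrak m\rangle$ has Jordan--H\"older multiplicity exactly two in $\pi''$ and the non-triviality of the original extension class prevents both copies from lying in the socle.
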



\subsection{Extensions between two $\zeta(\mathfrak m)$ }


Let $\Delta_1, \ldots, \Delta_r$ be all the distinct segments such that $\mathfrak m_{\Delta_i} \neq \emptyset$ and label in the way that $\Delta_1 \not\leq_b \Delta_2 \not\leq_b \ldots \not\leq_b \Delta_r$. Let $n_i=l_{abs}(\mathfrak m_{\Delta_i})$ for $i=1, \ldots, r$. Denote by $G(\mathfrak m)$ the group $G_{n_1}\times \ldots \times G_{n_r}$. Let, as a $G(\mathfrak m)$-representation, 
\[  [ \mathfrak m ] =  \langle \mathfrak m_{\Delta_1} \rangle \boxtimes \ldots \boxtimes \langle \mathfrak m_{\Delta_r} \rangle .
\]

\begin{lemma} \label{lem ext group self isomor}
For $\mathfrak m \in \mathrm{Mult}_n$, and for any $i$,
\[  \mathrm{Ext}^i_{G_n}(\zeta(\mathfrak m), \zeta(\mathfrak m)) \cong \mathrm{Ext}^i_{G(\mathfrak m)}([ \mathfrak m ], [ \mathfrak m ]) .
\]
\end{lemma}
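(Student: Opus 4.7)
Write $\zeta(\mathfrak m) = \mathrm{Ind}_P^{G_n}[\mathfrak m]$, where $P \subset G_n$ is the standard parabolic with Levi $L := G(\mathfrak m)$. Since parabolic induction is exact, its Bernstein right adjoint -- the Jacquet functor $r_{P^-}$ for the opposite parabolic -- preserves injectives, so the second adjunction derives to yield, for every $i\geq 0$,
\[ \mathrm{Ext}^i_{G_n}(\zeta(\mathfrak m), \zeta(\mathfrak m)) \;\cong\; \mathrm{Ext}^i_{L}\big([\mathfrak m],\, \zeta(\mathfrak m)_{N^-}\big), \]
where $N^-$ is the unipotent radical of $P^-$. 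The plan is to show that the right-hand side picks up only the copy of $[\mathfrak m]$ sitting inside $\zeta(\mathfrak m)_{N^-}$.

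Next, apply the geometric lemma to $r_{P^-}\circ \mathrm{Ind}_P^{G_n}$: it equips $\zeta(\mathfrak m)_{N^-}$ with a filtration whose subquotients are parametrized by double cosets $W_L\backslash W_{G_n}/ W_L$ (equivalently, by matrices of non-negative integers $(a_{ij})$ with row sum $n_i$ and column sum $n_j$). The identity double coset contributes exactly $[\mathfrak m]$, while every non-trivial $w$ contributes a layer of the form $\mathrm{Ind}_M^L\big(w\cdot r_{M'}^L[\mathfrak m]\big)$ for proper sub-Levis $M, M'\subsetneq L$. The central combinatorial assertion is that each simple composition factor $\omega=\omega_1\boxtimes\cdots\boxtimes\omega_r$ of such a non-trivial layer satisfies $\mathrm{csupp}(\omega_j)\neq \mathrm{csupp}(\langle\mathfrak m_{\Delta_j}\rangle)$ for some index $j$.

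Granting this claim, $\langle\mathfrak m_{\Delta_j}\rangle$ and $\omega_j$ lie in different fibres of the cuspidal-support map on $G_{n_j}$, so the Bernstein center of $G_{n_j}$ acts on them by distinct generalized characters, forcing $\mathrm{Ext}^*_{G_{n_j}}(\langle\mathfrak m_{\Delta_j}\rangle, \omega_j)=0$. The K\"unneth formula on $L = \prod_j G_{n_j}$ then gives $\mathrm{Ext}^*_L([\mathfrak m], \omega)=0$ for every simple composition factor $\omega$ of a non-trivial layer, and iterating the long exact sequences associated to the geometric-lemma filtration produces
\[ \mathrm{Ext}^i_L([\mathfrak m],\, \zeta(\mathfrak m)_{N^-}) \;\cong\; \mathrm{Ext}^i_L([\mathfrak m], [\mathfrak m]), \]
which, combined with the first displayed isomorphism, is the desired identity.

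The hard part will be justifying the cuspidal-support assertion: one must show that a non-trivial shuffle of the $L$-factors in the geometric lemma always changes the cs multiset on at least one factor. I would exploit the explicit Jacquet formulas for segment and Speh representations recalled in Section~\ref{ss jacquet segment steinberg}, together with the ordering $\Delta_1\not\leq_b\cdots\not\leq_b\Delta_r$ used in the definition of $[\mathfrak m]$: in a non-diagonal $(a_{ij})$, some factor $j$ must receive cuspidals originating from a $\Delta_i$ with $i\neq j$, and these ``foreign'' cuspidals cannot reassemble into the multiset $k_j\cdot\mathrm{supp}(\Delta_j)$ because the multiplicities of the extremal endpoints of $\Delta_j$ appearing at factor $j$ are rigidly determined by the Jacquet module of $\langle k_j\Delta_j\rangle=\langle\mathfrak m_{\Delta_j}\rangle$ alone.
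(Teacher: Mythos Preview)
Your approach is correct and rests on the same ingredients as the paper's: adjunction between parabolic induction and Jacquet functor, the geometric lemma filtration, and a cuspidal-support comparison to kill all but the trivial layer. The paper, however, organizes the argument \emph{inductively}: it writes $\zeta(\mathfrak m)=\langle \mathfrak m_{\Delta}\rangle \times \zeta(\mathfrak m-\mathfrak m_{\Delta})$ for the $\leq_b$-maximal $\Delta$, applies Frobenius reciprocity and the geometric lemma once (for the two-block parabolic $P_{n_1,n-n_1}$), and then repeats on $\zeta(\mathfrak m-\mathfrak m_{\Delta})$. The payoff of this inductive structure is that the ``hard part'' you flag essentially disappears: at each step the cuspidal-support check is only that no non-trivial layer puts the correct cuspidal multiset on the \emph{first} factor $G_{n_1}$, and this is immediate from the $\leq_b$-maximality of $\Delta$ together with the Jacquet formulas for $\langle \Delta\rangle$ in Section~\ref{ss jacquet segment steinberg} (the endpoint $b(\Delta)$ cannot be manufactured from the remaining segments). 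Your one-shot reduction to the full Levi $G(\mathfrak m)$ is valid and ultimately equivalent, but it forces you to analyze all $W_L\backslash W_{G_n}/W_L$-layers at once, which is why the extremal-endpoint argument you sketch becomes more delicate; unwinding it carefully you would likely rediscover the paper's induction.
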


\begin{proof}
Let $\Delta$ be a $\leq_b$-maximal element such that $\mathfrak m_{\Delta} \neq \emptyset$. Then we write 
\[  \zeta(\mathfrak m)=\langle \mathfrak m_{\Delta}\rangle \times \zeta(\mathfrak m-\mathfrak m_{\Delta} ) .  
\]
Let $n_1=l_{abs}(\mathfrak m_{b= \rho'})$ and let $n=l_{abs}(\mathfrak m)$.
Now, 
\[  \mathrm{Ext}^i_{G_n}(\zeta(\mathfrak m), \zeta(\mathfrak m)) \cong \mathrm{Ext}^i_{G_{n_1}\times G_{n-n_1}}(\langle \mathfrak m_{\Delta} \rangle \boxtimes \zeta(\mathfrak m-\mathfrak m_{\Delta}), \langle \mathfrak m_{\Delta} \rangle \boxtimes \zeta(\mathfrak m-\mathfrak m_{\Delta})) ,
\]
which follows from first applying Frobenius reciprocity and then using the geometric lemma and Lemma \ref{lem combinatorial cuspidal repn} to single out the only layer that has the same cuspidal support as $\zeta(\mathfrak m_{\Delta})\boxtimes \zeta(\mathfrak m-\mathfrak m_{\Delta})$. We now repeat similar process for $\zeta(\mathfrak m-\mathfrak m_{\Delta})$.
\end{proof}


We now focus on $i=1$ in Lemma \ref{lem ext group self isomor} to study first extensions. We now have the following:

\begin{proposition} \label{prop isom levi and induced one}
Let $\pi_1, \pi_2 \in \mathrm{Alg}_f(G_n)$. Suppose each of $\pi_1$ and $\pi_2$ admits a filtration with successive subquotients isomorphic to $\zeta(\mathfrak m)$. Let $M=G(\mathfrak m)$ and let $P$ be the standard parabolic subgroup containing $M$. Then
\begin{enumerate}
\item for each $i=1,2$, there exists an admissible $M$-representation $\tau_i$ which admits a filtration with successive subquotients isomorphic to $[ \mathfrak m ]$ such that $\pi_i \cong \mathrm{Ind}_{P}^{G_n}\tau_i$, and 
\item $\pi_1 \cong \pi_2$ if and only if $\tau_1 \cong \tau_2$. 
\end{enumerate}
\end{proposition}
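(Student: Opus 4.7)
The plan is to establish that the induction functor $\mathrm{Ind}_P^{G_n}$ restricts to an equivalence of categories from the full subcategory $\mathcal C_M \subset \mathrm{Alg}_f(M)$ of $M$-representations admitting a filtration by $[\mathfrak m]$, to the full subcategory $\mathcal C_{G_n} \subset \mathrm{Alg}_f(G_n)$ of $G_n$-representations admitting a filtration by $\zeta(\mathfrak m) = \mathrm{Ind}_P^{G_n}[\mathfrak m]$. Part (1) is the essential surjectivity and part (2) reduces to this functor reflecting isomorphisms.

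The first and main step is to upgrade Lemma~\ref{lem ext group self isomor} to a natural isomorphism
\[ \mathrm{Ext}^i_{G_n}(\mathrm{Ind}_P^{G_n}\tau_1, \mathrm{Ind}_P^{G_n}\tau_2) \cong \mathrm{Ext}^i_M(\tau_1,\tau_2) \]
for every $\tau_1,\tau_2 \in \mathcal C_M$ and all $i \geq 0$. The case $\tau_1 = \tau_2 = [\mathfrak m]$ is exactly Lemma~\ref{lem ext group self isomor}, whose proof iterates the geometric lemma together with a cuspidal support argument to pick out a unique layer in $(\zeta(\mathfrak m))_N$. Specifically, the labelling $\Delta_1 \not\leq_b \cdots \not\leq_b \Delta_r$ together with the $\leq_b$-maximality of each $\Delta_i$ relative to $\mathfrak m - \sum_{j<i} \mathfrak m_{\Delta_j}$ forces the non-mixing layer $\langle \mathfrak m_{\Delta_1}\rangle \boxtimes \cdots \boxtimes \langle \mathfrak m_{\Delta_r}\rangle = [\mathfrak m]$ to be the only layer whose cuspidal support on each factor $G_{n_j}$ of $M$ matches that of $[\mathfrak m]$; any other layer must move a piece containing the $\leq_b$-maximal cuspidal $b(\Delta_1)$ away from the first factor, and nothing in $\zeta(\mathfrak m - \mathfrak m_{\Delta_1})$ can compensate. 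From this base case, the statement for general $\tau_1, \tau_2 \in \mathcal C_M$ follows by induction on the filtration lengths via the five-lemma applied to the long exact sequences arising from short exact sequences $0 \to [\mathfrak m] \to \tau_i \to \tau_i'' \to 0$, using exactness of $\mathrm{Ind}_P^{G_n}$.

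For part (1), I would induct on the length of the filtration of $\pi_i$. The base case $\pi_i \cong \zeta(\mathfrak m)$ takes $\tau_i = [\mathfrak m]$. For the inductive step, choose a short exact sequence
\[ 0 \to \pi' \to \pi_i \to \zeta(\mathfrak m) \to 0, \]
where by the induction hypothesis $\pi' \cong \mathrm{Ind}_P^{G_n}\tau'$ for some $\tau' \in \mathcal C_M$. The Yoneda class of this sequence lies in $\mathrm{Ext}^1_{G_n}(\zeta(\mathfrak m), \mathrm{Ind}_P^{G_n}\tau')$, which via the isomorphism of Step~1 pulls back to a class in $\mathrm{Ext}^1_M([\mathfrak m], \tau')$, defining a sequence $0 \to \tau' \to \tau_i \to [\mathfrak m] \to 0$. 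By naturality of the Ext-bijection, applying $\mathrm{Ind}_P^{G_n}$ to this sequence gives a class equivalent to the original one, so $\mathrm{Ind}_P^{G_n}\tau_i \cong \pi_i$, and $\tau_i \in \mathcal C_M$ by construction.

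For part (2), the $i=0$ instance of the Ext-isomorphism gives full faithfulness of $\mathrm{Ind}_P^{G_n}$ on $\mathcal C_M$. The ``if'' direction is functoriality; for ``only if'', an isomorphism $\mathrm{Ind}_P^{G_n}\tau_1 \cong \mathrm{Ind}_P^{G_n}\tau_2$ corresponds under the Hom-isomorphism to a morphism $\tau_1 \to \tau_2$, which must itself be an isomorphism since $\mathrm{Ind}_P^{G_n}$ is exact and faithful (an isomorphism has both trivial kernel and trivial cokernel, and induction preserves both). The principal technical obstacle throughout is the cuspidal-support bookkeeping when iterating the geometric lemma; once handled carefully in the base case of Step~1, this is imported wholesale into the inductive arguments without further difficulty.
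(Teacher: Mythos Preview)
Your approach is correct but takes a different route from the paper. The paper constructs $\tau_i$ directly as the Bernstein-center component of the Jacquet module $(\pi_i)_N$ having the cuspidal support of $[\mathfrak m]$, produces the map $\pi_i \to \mathrm{Ind}_P^{G_n}\tau_i$ via Frobenius reciprocity, and then proves it is an isomorphism by a multiplicity-counting argument (tracking occurrences of $\langle\mathfrak m\rangle$ and of $[\mathfrak m]$); part (2) is handled by a similar surjectivity-plus-multiplicity argument. Your approach instead bootstraps Lemma~\ref{lem ext group self isomor} to a functorial $\mathrm{Ext}$-isomorphism on all of $\mathcal C_M$ via the five-lemma, then reads off essential surjectivity from $\mathrm{Ext}^1$ and full faithfulness from $\mathrm{Ext}^0$. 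This is cleaner categorically and in effect proves Proposition~\ref{prop isom levi and induced one} and Corollary~\ref{cor equivalence of categories} simultaneously; the paper's approach is more concrete in that it names $\tau_i$ explicitly as a Jacquet-module summand.

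One point you pass over deserves a sentence: you need the isomorphism of Lemma~\ref{lem ext group self isomor} to be \emph{the} map induced by the functor $\mathrm{Ind}_P^{G_n}$, not merely some abstract isomorphism, in order for the Yoneda-class transfer in your inductive step to land on the original extension. Concretely, the Frobenius isomorphism $\mathrm{Ext}^i_{G_n}(\mathrm{Ind}\tau_1,\mathrm{Ind}\tau_2)\cong\mathrm{Ext}^i_M((\mathrm{Ind}\tau_1)_N,\tau_2)$ composed with the counit $(\mathrm{Ind}\tau_1)_N\to\tau_1$ gives the functorial map, and the cuspidal-support splitting shows the counit is the projection onto the $[\mathfrak m]$-summand; hence the composite is the isomorphism you use. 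This is routine once noticed, but it is exactly what makes ``by naturality'' valid here.
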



\begin{proof}
Let $n=l_{abs}(\mathfrak m)$. Let $P=MN$ be the Levi decomposition. We first consider $(\pi_i)_N$. Let $\tau_i$ be the projection of $(\pi_i)_N$ to the component that has the same cuspidal support as $[\mathfrak m ]$. By repeated use of Lemma \ref{lem geometric lemma on standard ones} (under the situation that $\mathfrak m_2$ in Lemma \ref{lem geometric lemma on standard ones} is empty), we have that $\tau_i$ admits a filtration whose successive subquotients are isomorphic to $[\mathfrak m]$. Then, applying Frobenius reciprocity, we have
\[  \mathrm{Hom}_{G(\mathfrak m)}((\pi_i)_N, \tau_i) \cong \mathrm{Hom}_{G_n}(\pi_i, \mathrm{Ind}_P^{G_n}\tau_i)
\]
and so we obtain a map $f$ in $\mathrm{Hom}_{G_n}(\pi_i, \mathrm{Ind}_P^{G_n}\tau_i)$ corresponding to the surjection 
\[   (\pi_i)_N  \twoheadrightarrow \tau_i .
\]

\noindent
{\it Claim:} $f$ is an isomorphism. 

\noindent
{\it Proof of the claim:} If $f$ is not an isomorphism, then by counting the number of composition factors, we must have an embedding $\langle \mathfrak m \rangle \stackrel{\iota}{\hookrightarrow} \pi_i$ such that $f \circ \iota =0$. However, via the functoriality of Forbenius reciprocity, we also have a composition of maps
\[  [ \mathfrak m ] \hookrightarrow \langle \mathfrak m\rangle_N \hookrightarrow (\pi_i)_N \twoheadrightarrow \tau
\] 
is zero. However, this is not possible since the multiplicity of $[ \mathfrak m ]$ in $(\pi_i)_N$ agrees with that in $\tau$ via the construction above. \\

Now the claim gives that $\pi_i \cong \mathrm{Ind}_P^{G_n} \tau_i$ and this proves (1). 

We now prove (2). The if direction is clear. For the only if direction, let $f: \mathrm{Ind}_P^{G_n}\tau_1 \rightarrow \mathrm{Ind}_P^{G_n}\tau_2$ be the isomorphism. Then the corresponding map, denoted $\widetilde{f}$, under Frobenius reciprocity is given by: $\widetilde{f}(x)=f(x)(1)$, where $1$ is the evaluation at the identity by viewing $f(x)$ as a function in $\mathrm{Ind}_P^{G_n}\tau_2$; and $x$ is any representative in $(\pi_1)_N$. Since $f$ is an isomorphism, the map $\widetilde{f}$ is surjective. Thus the multiplicity of $[ \mathfrak m ]$ in $\tau_1$ must be at least that in $\tau_2$. Similarly, we can obtain that the multiplicity of $[ \mathfrak m ]$ in $\tau_2$ must be at least that in $\tau_1$. Since the two multiplicities agree, $\widetilde{f}$ restricted to $\tau_1$ in $(\pi_1)_N$ must be an isomorphism.
\end{proof}

\begin{corollary} \label{cor equivalence of categories}
Let $\mathfrak m \in \mathrm{Mult}_n$. Let $\mathcal C_1$ be the full subcategory of $\mathrm{Alg}_f(G_n)$ whose objects admit a finite filtration with successive subquotients isomorphic to $\zeta(\mathfrak m)$. Let $\mathcal C_2$ be the full subcategory of $\mathrm{Alg}_f(G(\mathfrak m))$ whose objects admit a finite filtration with successive subquotients isomorphic to $[ \mathfrak m ]$. There is a categorical equivalence between $\mathcal C_1$ and $\mathcal C_2$. Here $\mathrm{Alg}_f(G(\mathfrak m))$ is the category of smooth representations of finite length of $G(\mathfrak m)$.
\end{corollary}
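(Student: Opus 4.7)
My plan is to upgrade Proposition \ref{prop isom levi and induced one} from a bijection on isomorphism classes to a full categorical equivalence by constructing explicit quasi-inverse functors and verifying bijectivity on morphism sets. Define $\Phi \colon \mathcal C_2 \to \mathcal C_1$ by $\Phi(\tau) = \mathrm{Ind}_P^{G_n}\tau$, where $P$ is the standard parabolic containing $M = G(\mathfrak m)$. Since parabolic induction is exact and sends $[\mathfrak m]$ to $\zeta(\mathfrak m)$ by the very definition of the latter, applying $\Phi$ to a filtration of an object in $\mathcal C_2$ with quotients $[\mathfrak m]$ produces a filtration with quotients $\zeta(\mathfrak m)$, so $\Phi$ is well-defined into $\mathcal C_1$.

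In the reverse direction, I would define $\Psi \colon \mathcal C_1 \to \mathcal C_2$ by letting $\Psi(\pi)$ be the projection of $\pi_N$ onto the Bernstein component whose cuspidal support matches that of $[\mathfrak m]$. This projection is functorial and exact because it is a direct summand coming from the Bernstein decomposition. That $\Psi(\pi)$ lies in $\mathcal C_2$ follows by applying the Jacquet functor to a filtration of $\pi$ and repeating the cuspidal-support analysis used inside the proof of Lemma \ref{lem ext group self isomor}: in the resulting filtration of $\pi_N$, each layer $\zeta(\mathfrak m)_N$ projects to $[\mathfrak m]$ once the summands with different cuspidal support on $M$ are discarded.

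Next I would verify that the unit $\pi \to \Phi(\Psi(\pi))$ and the counit $\Psi(\Phi(\tau)) \to \tau$ are isomorphisms. The unit is built exactly as in the Claim inside the proof of Proposition \ref{prop isom levi and induced one}, where the natural map $\pi \to \mathrm{Ind}_P^{G_n}\Psi(\pi)$ adjoint to $\pi_N \twoheadrightarrow \Psi(\pi)$ is shown to be an isomorphism by counting composition factors. For the counit, the geometric lemma gives a filtration of $(\mathrm{Ind}_P^{G_n}\tau)_N$ whose bottom layer is $\tau$ itself and whose remaining layers come from nontrivial Weyl coset representatives; a cuspidal support argument shows that only the bottom layer lives in the component onto which $\Psi$ projects, yielding $\Psi(\Phi(\tau)) \cong \tau$.

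The step I expect to be the main obstacle is checking bijectivity on morphisms, since this requires more than the object-level matching established in Proposition \ref{prop isom levi and induced one}. For $\Phi$, full faithfulness reduces via Frobenius reciprocity to the isomorphism
\[ \mathrm{Hom}_{G_n}(\mathrm{Ind}_P^{G_n}\tau_1, \mathrm{Ind}_P^{G_n}\tau_2) \cong \mathrm{Hom}_M(\tau_1, (\mathrm{Ind}_P^{G_n}\tau_2)_{N^-}), \]
and the same geometric-lemma and cuspidal-support argument as above shows that the projection of $(\mathrm{Ind}_P^{G_n}\tau_2)_{N^-}$ onto the relevant Bernstein component is $\tau_2$, so that the right side equals $\mathrm{Hom}_M(\tau_1, \tau_2)$. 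Once $\Phi$ is fully faithful and essentially surjective (essential surjectivity is immediate from Proposition \ref{prop isom levi and induced one}(1)), $\Psi$ is automatically a quasi-inverse and the categorical equivalence follows.
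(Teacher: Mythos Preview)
Your proposal is correct and follows essentially the same approach as the paper: both use parabolic induction $\mathrm{Ind}_P^{G_n}$ as the functor $\mathcal C_2 \to \mathcal C_1$, identify its inverse via the Jacquet functor followed by projection onto the cuspidal-support component of $[\mathfrak m]$, and establish full faithfulness by Frobenius reciprocity together with the observation that only the $[\mathfrak m]$-component of the Jacquet module can contribute to the relevant $\mathrm{Hom}$. The only cosmetic differences are that the paper uses first adjointness (computing $\mathrm{Hom}_{G(\mathfrak m)}((\pi_1)_N,\tau_2)$) rather than second adjointness with $N^-$, and argues injectivity plus equality of finite dimensions rather than explicitly verifying unit and counit; these are interchangeable routine choices.
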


\begin{proof}
Using Proposition \ref{prop isom levi and induced one} (and its notations), one can write $\pi_i=\mathrm{Ind}_P^{G_n}\tau_i$ for some $\tau_i$ in $\mathcal C_1$. It remains to see that the induction functor in the previous proposition also defines an isomorphism on the morphism spaces. The induced map
\[  \mathrm{Hom}_{G(\mathfrak m)}(\tau_1, \tau_2) \rightarrow \mathrm{Hom}_{G_n}(\pi_1, \pi_2)
\]
is injective since the parabolic induction sends any non-zero objects to non-zero objects. Now it follows from Frobenius reciprocity,
\[  \mathrm{Hom}_{G_n}(\pi_1, \pi_2) \cong \mathrm{Hom}_{G(\mathfrak m)}((\pi_1)_N, \tau_2)\cong \mathrm{Hom}_{G(\mathfrak m)}(\tau_1, \tau_2) .
\]
The last isomorphism follows from the proof of Proposition \ref{prop isom levi and induced one} that $\tau_i$ is the component of $(\pi_i)_N$ that has the same cuspidal support as $\tau_i$. Now the finite-dimensionality of the Hom spaces implies that the injection is also an isomorphism, as desired.
\end{proof}

\begin{corollary}
We use the notations in Corollary \ref{cor equivalence of categories}. Let $\tau$ be an object in $\mathcal C_2$ and let $\pi$ be the corresponding representation under the equivalence in Corollary \ref{cor equivalence of categories}. Then 
\[  \mathrm{dim}~\mathrm{Hom}_{G_n}(\langle \mathfrak m \rangle, \pi) = \mathrm{dim}~\mathrm{Hom}_{G(\mathfrak m)}([ \mathfrak m ], \tau) .
\]
\end{corollary}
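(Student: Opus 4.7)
The plan is to reduce the identity to the statement $\sigma \cong [\mathfrak m]$, where $\sigma$ denotes the projection of the Jacquet module $\langle \mathfrak m \rangle_N$ to the component with the same cuspidal support as $[\mathfrak m]$ (in the sense used in the proof of Proposition \ref{prop isom levi and induced one}), and $N$ is the unipotent radical of $P$. By Frobenius reciprocity,
\[
  \mathrm{Hom}_{G_n}(\langle \mathfrak m \rangle, \mathrm{Ind}_P^{G_n}\tau) \;\cong\; \mathrm{Hom}_{G(\mathfrak m)}(\langle \mathfrak m \rangle_N, \tau).
\]
Since every composition factor of $\tau\in\mathcal C_2$ is $[\mathfrak m]$, any $G(\mathfrak m)$-homomorphism from $\langle \mathfrak m \rangle_N$ into $\tau$ annihilates the summands of $\langle \mathfrak m \rangle_N$ whose cuspidal support differs from that of $[\mathfrak m]$. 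Hence the right-hand side equals $\mathrm{Hom}_{G(\mathfrak m)}(\sigma, \tau)$, and the corollary reduces to proving $\sigma \cong [\mathfrak m]$.

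For the identification $\sigma \cong [\mathfrak m]$, apply the exact composite functor ``Jacquet module $(\cdot)_N$ followed by projection to the cuspidal-support component of $[\mathfrak m]$'' to the canonical embedding $\langle \mathfrak m \rangle \hookrightarrow \zeta(\mathfrak m) = \mathrm{Ind}_P^{G_n}[\mathfrak m]$. This produces an embedding $\sigma \hookrightarrow \sigma'$, where $\sigma'$ is the corresponding summand of $\zeta(\mathfrak m)_N$. Now $\zeta(\mathfrak m) \in \mathcal C_1$, and both $[\mathfrak m]$ and $\sigma'$ are objects of $\mathcal C_2$ whose parabolic induction is isomorphic to $\zeta(\mathfrak m)$: for $\sigma'$ this is exactly Proposition \ref{prop isom levi and induced one}(1), while for $[\mathfrak m]$ it is tautological. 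The uniqueness assertion in Proposition \ref{prop isom levi and induced one}(2) therefore forces $\sigma' \cong [\mathfrak m]$, so $\sigma$ embeds into the irreducible $[\mathfrak m]$. Finally, Frobenius reciprocity applied to $\langle \mathfrak m \rangle \hookrightarrow \zeta(\mathfrak m)$ produces a non-zero element of $\mathrm{Hom}_{G(\mathfrak m)}(\langle \mathfrak m \rangle_N, [\mathfrak m])$, which factors through $\sigma$; hence $\sigma \neq 0$ and therefore $\sigma \cong [\mathfrak m]$.

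There is no substantial obstacle. The entire argument is formal, using only Frobenius reciprocity, the exactness of the cuspidal-support projection on admissible finite-length representations, and Proposition \ref{prop isom levi and induced one}. In particular, no further combinatorial input -- for instance, a direct analysis of $\zeta(\mathfrak m)_N$ via the geometric lemma -- is required, since the identification of the $[\mathfrak m]$-cuspidal-support summand of $\zeta(\mathfrak m)_N$ is already packaged inside Proposition \ref{prop isom levi and induced one}.
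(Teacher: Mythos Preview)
Your argument is correct. The key step---identifying the $[\mathfrak m]$-cuspidal-support summand $\sigma$ of $\langle \mathfrak m\rangle_N$ with $[\mathfrak m]$ itself---is proved cleanly by embedding into the corresponding summand $\sigma'$ of $\zeta(\mathfrak m)_N$ and then invoking Proposition~\ref{prop isom levi and induced one} (or, equivalently, the equivalence of Corollary~\ref{cor equivalence of categories}) to force $\sigma'\cong[\mathfrak m]$.

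Your route differs from the paper's. The paper establishes the dimension equality by two separate inequalities: for $\geq$ it lifts the embedding $[\mathfrak m]^{\oplus k}\hookrightarrow\tau$ through the categorical equivalence to $\zeta(\mathfrak m)^{\oplus k}\hookrightarrow\pi$ and then uses $\langle\mathfrak m\rangle\hookrightarrow\zeta(\mathfrak m)$; for $\leq$ it applies the Jacquet functor to $\langle\mathfrak m\rangle^{\oplus l}\hookrightarrow\pi$ and projects to the $[\mathfrak m]$-component. The second half of the paper's argument implicitly relies on the same fact $\sigma\cong[\mathfrak m]$ that you isolate and prove explicitly. By contrast, you bypass the two-inequality structure entirely: once $\sigma\cong[\mathfrak m]$ is known, Frobenius reciprocity gives a \emph{natural isomorphism} $\mathrm{Hom}_{G_n}(\langle\mathfrak m\rangle,\pi)\cong\mathrm{Hom}_{G(\mathfrak m)}([\mathfrak m],\tau)$ rather than just a dimension count. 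This is more economical and makes the underlying mechanism (that $\langle\mathfrak m\rangle$ and $[\mathfrak m]$ correspond under the Jacquet--induction adjunction restricted to these categories) transparent.
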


\begin{proof}
Let $k=\mathrm{dim}~\mathrm{Hom}_{G(\mathfrak m)}([ \mathfrak m ], \tau)$. By the equivalence of categories, we have an embedding:
\[  \overbrace{\zeta(\mathfrak m) \oplus \ldots \oplus \zeta(\mathfrak m)}^{k \mbox{ times} } \hookrightarrow  \pi=\mathrm{Ind}_P^{G_n}\tau   .\]
Hence, $\mathrm{dim}~\mathrm{Hom}_{G_n}(\langle \mathfrak m \rangle, \pi) \geq k$. 

Let $l=\mathrm{dim}~\mathrm{Hom}_{G_n}(\langle \mathfrak m \rangle, \pi)$. Suppose $l>k$. Then, we have an embedding:
\[   \overbrace{\langle \mathfrak m \rangle \oplus \ldots \oplus \langle \mathfrak m \rangle}^{l \mbox{ times} } \hookrightarrow \pi=\mathrm{Ind}_P^{G_n}\tau .
\]
Now since the Jacquet functor is exact, we have that:
\[   \overbrace{[ \mathfrak m ] \oplus \ldots \oplus [ \mathfrak m ]}^{l \mbox{ times} } \hookrightarrow \tau.
\]
This then gives a contradiction. Hence, we must have that $l=k$.
\end{proof}

\section{Big derivatives}

In this section, we introduce the notion of big derivatives and describe some basic properties.

\subsection{Big derivatives} \label{ss big derivatives}

\begin{definition} \label{def big derivatives}
Let $\sigma \in \mathrm{Irr}(G_i)$. For $\pi \in \mathrm{Alg}_f(G_n)$, define 
\[ \mathbb D_{\sigma}(\pi) = \mathrm{Hom}_{G_{i}}( \sigma, \pi_{N_{n-i}^-}),
\]
where $\pi_{N_{n-i}^-}$ is regarded as a $G_i$-representation via embedding $G_i$ to the first factor of $G_i \times G_{n-i}$. We regard $\mathbb D_{\sigma}(\pi)$ as a $G_{n-i}$-representation via:
\[   (g.f)(x)=(I_i, g).f(x) .
\]

 By applying the element $\begin{pmatrix} & I_{n-i} \\ I_i & \end{pmatrix}$, one can switch the $G_i \times G_{n-i}$-representation $\pi_{N_{n-i}^-}$ to $G_{n-i}\times G_i$-representation $\pi_{N_i}$. This gives the following isomorphism:
\begin{align}
 \mathbb D_{\sigma}(\pi) \cong \mathrm{Hom}_{G_{i}}(\sigma, \pi_{N_i}) .
\end{align}
We shall use the identification frequently in Section \ref{s application si derivatives}.

We similarly define the left version as:
\[ \mathbb D'_{\sigma}(\pi)=\mathrm{Hom}_{G_{i}}(\sigma, \pi_{N_{i}^-}),
\]
where $\pi_{N_{i}^-}$ is regarded as a $G_{n-i}\times G_i$-representation via the embedding $G_i \hookrightarrow G_{n-i}\times G_i$ into the second factor of $G_{n-i}\times G_i$. We remark that $\mathbb D_{\sigma}$ and $\mathbb D'_{\sigma}$ are left-exact, but not exact. 
\end{definition}

We only prove results for $\mathbb D$ and results for $\mathbb D'$ can be formulated and proved similarly (c.f. Section \ref{ss left right induction}). When $\pi$ is $\square$-irreducible, $\mathbb D_{\pi}(\tau)$ is either zero or has unique simple submodule \cite{KKKO15}. If $\mathbb D_{\pi}(\tau) \neq 0$, we shall denote by $D_{\pi}(\tau)$ the unique submodule. 



\subsection{Composition of big derivatives}

\begin{proposition} \label{prop composition of derivatives}
Let $\sigma_1,\ldots, \sigma_r \in \mathrm{Irr}$ such that $\sigma_1 \times \ldots \times \sigma_r$ is still irreducible. Then 
\[ \mathbb D_{\sigma_r}\circ \ldots \circ  \mathbb D_{\sigma_1}(\pi) \cong \mathbb D_{\sigma_1\times \ldots \times \sigma_r}(\pi) .
\]
\end{proposition}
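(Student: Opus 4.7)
The plan is to apply Yoneda. Setting $k = k_1 + \cdots + k_r$ with $\sigma_j \in \mathrm{Irr}(G_{k_j})$, both sides of the claimed identity live in $\mathrm{Alg}(G_{n-k})$, and I will show that each represents the functor
\[ V \longmapsto \mathrm{Hom}_{G_n}(\sigma_1 \times \cdots \times \sigma_r \times V,\, \pi) \]
on $V \in \mathrm{Alg}(G_{n-k})$.

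The main input is the adjunction already mentioned in the introduction: for any $\sigma \in \mathrm{Irr}(G_m)$, smooth $\pi \in \mathrm{Alg}(G_n)$, and $V \in \mathrm{Alg}(G_{n-m})$, one has
\[ \mathrm{Hom}_{G_{n-m}}(V, \mathbb D_\sigma(\pi)) \cong \mathrm{Hom}_{G_n}(\sigma \times V, \pi). \]
I would derive this by composing Bernstein's second adjointness
\[ \mathrm{Hom}_{G_n}(\mathrm{Ind}_{P_{m,n-m}}^{G_n}(\sigma \boxtimes V),\, \pi) \cong \mathrm{Hom}_{G_m \times G_{n-m}}(\sigma \boxtimes V,\, \pi_{N^-}) \]
with the tensor-Hom identification $\mathrm{Hom}_{G_m \times G_{n-m}}(\sigma \boxtimes V, M) \cong \mathrm{Hom}_{G_{n-m}}(V, \mathrm{Hom}_{G_m}(\sigma, M))$ applied to $M = \pi_{N^-}$; the inner Hom on the right is precisely $\mathbb D_\sigma(\pi)$ by Definition \ref{def big derivatives}.

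Once the adjunction is in hand, I apply it $r$ times on the left-hand side and once on the right-hand side to obtain
\[ \mathrm{Hom}_{G_{n-k}}(V, \mathbb D_{\sigma_r} \circ \cdots \circ \mathbb D_{\sigma_1}(\pi)) \cong \mathrm{Hom}_{G_n}(\sigma_1 \times \cdots \times \sigma_r \times V,\, \pi) \cong \mathrm{Hom}_{G_{n-k}}(V, \mathbb D_{\sigma_1 \times \cdots \times \sigma_r}(\pi)), \]
the middle identification being associativity of parabolic induction. The irreducibility hypothesis on $\sigma_1 \times \cdots \times \sigma_r$ is used only to ensure that the right-hand big derivative is defined in the sense of the paper. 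Yoneda then gives the claimed isomorphism of $G_{n-k}$-modules.

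The main obstacle I foresee is the careful justification of the tensor-Hom step without admissibility or finite-generation hypotheses on $V$ or $\pi$. This can be handled by filtering $V$ by its finitely generated subrepresentations and noting that both sides of the adjunction commute with filtered colimits in $V$; alternatively, one could replace $\sigma$ by a finitely generated projective cover inside its Bernstein block and transfer the adjunction from there. A more concrete route, bypassing the abstract adjunction, is an induction on $r$: write $\sigma_1 \times \cdots \times \sigma_r = \mathrm{Ind}_Q^{G_k}(\sigma_1 \boxtimes \cdots \boxtimes \sigma_r)$ inside $G_k$, apply second adjointness inside $G_k$, and use transitivity of Jacquet functors to rewrite $\pi_{N^-_{k,n-k}}$ as the finer iterated Jacquet $\pi_{N^-_{k_1,\ldots,k_r,n-k}}$ before peeling off the $G_{k_1}$-Hom to recognise $\mathbb D_{\sigma_1}(\pi)$.
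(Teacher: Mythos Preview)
Your proof is correct and follows essentially the same route as the paper: both arguments establish the adjunction $\mathrm{Hom}(V,\mathbb D_\sigma(\pi))\cong\mathrm{Hom}(\sigma\times V,\pi)$ via second adjointness and tensor--Hom, iterate it, and invoke Yoneda; the paper merely writes out the $r=2$ case explicitly and reduces to it, which is the same ``induction on $r$'' you sketch at the end. Your concern about tensor--Hom is harmless here since $\sigma$ is irreducible and hence admissible.
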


\begin{proof}
For the given condition, we have that $\sigma_1 \times \ldots \times \sigma_s$ is still irreducible for $s\leq r$. Thus it reduces to $r=2$. Let $n_1=\mathrm{deg}(\sigma_1)$ and $n_2=\mathrm{deg}(\sigma_2)$. We have: for any $\tau \in \mathrm{Alg}_f(G_{n-n_1-n_2})$ and $\pi \in \mathrm{Alg}_f(G_n)$,
\begin{align*}
\mathrm{Hom}_{G_{n-n_1-n_2}}( \tau,  \mathbb D_{\sigma_2}\circ \mathbb D_{\sigma_1}(\pi)) 
 \cong &  \mathrm{Hom}_{G_{n_2}\times G_{n-n_1-n_2}}(\sigma_2\boxtimes \tau,\mathbb D_{\sigma_1}(\pi)_{N_{n_2,n-n_1-n_2}^-}) \\
	 \cong & \mathrm{Hom}_{G_{n-n_1}}(\sigma_2 \times \tau, \mathbb D_{\sigma_1}(\pi)) \\
 \cong  & \mathrm{Hom}_{G_{n_1}\times G_{n-n_1}}(\sigma_1 \boxtimes (\sigma_2\times \tau), \pi_{N^-_{n_1,n-n_1}}) \\
 \cong  & \mathrm{Hom}_{G_n}(\sigma_1 \times \sigma_2 \times \tau, \pi) \\
 \cong  & \mathrm{Hom}_{G_{n_1+n_2}\times G_{n-n_1-n_2}}((\sigma_1\times \sigma_2)\boxtimes \tau, \pi_{N_{n_1+n_2,n-n_1-n_2}}) \\
 \cong & \mathrm{Hom}_{G_{n-n_1-n_2}}(\tau, \mathbb D_{\sigma_1\times \sigma_2}(\pi)) .
\end{align*}
where the second, forth and fifth isomorphisms follow from Frobenius reciprocity, the first, third and last ones follow from the adjointness of the functors. We remark that the forth isomorphism also uses taking parabolic inductions in stages. Now the natural isomorphism between the two derivatives follows from the Yoneda lemma.
\end{proof}

\begin{proposition}
Let $\sigma_1, \ldots, \sigma_r \in \mathrm{Irr}^{\square}$ such that $\sigma_1\times \ldots \times \sigma_r$ is still $\square$-irreducible. Suppose $D_{\sigma_1 \times \ldots \times \sigma_r}(\pi) \neq 0$. Then $D_{\sigma_1 \times \ldots \times \sigma_r}(\pi) \cong D_{\sigma_1}\circ \ldots \circ D_{\sigma_r}(\pi)$. 
\end{proposition}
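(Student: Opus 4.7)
The plan is to proceed by induction on $r$, with $r=1$ trivial. For the inductive step, the $\square$-irreducibility of $\sigma_1 \times \cdots \times \sigma_r$ ensures (via irreducibility together with the intertwining operator formalism of \cite[\S 2]{LM19}) that the product is isomorphic under any permutation of its factors, and moreover that any sub-product such as $\sigma_2 \times \cdots \times \sigma_r$ is itself $\square$-irreducible. Combined with Proposition~\ref{prop composition of derivatives}, this yields
\[
\mathbb D_{\sigma_1}\!\bigl(\mathbb D_{\sigma_2 \times \cdots \times \sigma_r}(\pi)\bigr)\ \cong\ \mathbb D_{\sigma_1}\!\circ\!\cdots\!\circ\!\mathbb D_{\sigma_r}(\pi)\ \cong\ \mathbb D_{\sigma_1 \times \cdots \times \sigma_r}(\pi),
\]
a module whose simple socle is $L := D_{\sigma_1 \times \cdots \times \sigma_r}(\pi)$, non-zero by hypothesis and the commutativity just noted.

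Next, I would invoke the inductive hypothesis on the sub-product $\sigma_2 \times \cdots \times \sigma_r$ (noting that non-vanishing of the corresponding big derivative follows from the display above, since applying $\mathbb D_{\sigma_1}$ to a zero module produces zero) to identify $T := D_{\sigma_2} \circ \cdots \circ D_{\sigma_r}(\pi) \cong D_{\sigma_2 \times \cdots \times \sigma_r}(\pi)$ as a non-zero simple module. Applying the left-exact functor $\mathbb D_{\sigma_1}$ to the embedding $T \hookrightarrow \mathbb D_{\sigma_2 \times \cdots \times \sigma_r}(\pi)$ produces an embedding
\[
\mathbb D_{\sigma_1}(T)\ \hookrightarrow\ \mathbb D_{\sigma_1 \times \cdots \times \sigma_r}(\pi).
\]
Provided $\mathbb D_{\sigma_1}(T)\neq 0$, as a non-zero submodule of a module with simple socle $L$ it must contain $L$; since its own socle $D_{\sigma_1}(T)$ is simple (by $\square$-irreducibility of $\sigma_1$ and the non-vanishing), one concludes $D_{\sigma_1}(T) \cong L$, completing the induction.

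The main obstacle is thus establishing the non-vanishing $\mathbb D_{\sigma_1}(T) \neq 0$. To handle it, I would unwind the adjunction: the inclusion $L \hookrightarrow \mathbb D_{\sigma_1}\bigl(\mathbb D_{\sigma_2 \times \cdots \times \sigma_r}(\pi)\bigr)$ corresponds by Frobenius reciprocity to a non-zero map $\psi : \sigma_1 \times L \to \mathbb D_{\sigma_2 \times \cdots \times \sigma_r}(\pi)$, whose image is a non-zero submodule of the target and so necessarily contains the simple socle $T$. If the image of $\psi$ equals $T$, then $\psi$ surjects onto $T$ and Frobenius immediately supplies a non-zero element of $\mathrm{Hom}(L,\mathbb D_{\sigma_1}(T))$, giving the required non-vanishing. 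The delicate case to rule out is that of strict containment of $T$ in the image; here I would combine the multiplicity-one property of $T$ inside $\mathbb D_{\sigma_2 \times \cdots \times \sigma_r}(\pi)$ (from Kang--Kashiwara--Kim--Oh \cite{KKKO15}) with the simple socle/cosocle structure of $\sigma_1 \times L$ afforded by the $\square$-irreducibility of $\sigma_1$, in order to force $\psi$ to factor through the unique simple submodule $T$ and thereby reduce to the previous case.
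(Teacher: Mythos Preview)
Your inductive framework is sound, but the handling of what you call the ``main obstacle''---the non-vanishing $\mathbb D_{\sigma_1}(T)\neq 0$---has a real gap. The image of your map $\psi:\sigma_1\times L\to\mathbb D_{\sigma_2\times\cdots\times\sigma_r}(\pi)$ is a quotient of $\sigma_1\times L$, so its cosocle is the simple module $Q:=\mathrm{cosoc}(\sigma_1\times L)$. If $Q\not\cong T$, then $\mathrm{im}\,\psi$ has cosocle $Q$ and socle containing $T$, hence $\mathrm{im}\,\psi\supsetneq T$; neither a multiplicity-one statement for $T$ in the target nor the socle/cosocle structure of $\sigma_1\times L$ excludes this. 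What you actually need is $Q\cong T$, and nothing you have written supplies it. (Separately, the multiplicity-one assertion you invoke---that $T$ occurs once as a composition factor of $\mathbb D_{\sigma_2\times\cdots\times\sigma_r}(\pi)$---is the SI property of a big derivative with $\square$-irreducible inducing datum; \cite{KKKO15} gives multiplicity one of the head inside the \emph{product} $\Sigma\times\tau$, not inside $\mathbb D_\Sigma(\pi)$, and indeed the present paper establishes the latter only in the generic case, in Theorem~\ref{thm generic socle irreducible}.)

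The paper's proof obtains exactly the missing identity, directly and without the $\mathbb D$-machinery. From $L=D_{\sigma_1\times\cdots\times\sigma_r}(\pi)$ one has $\pi\hookrightarrow L\times\sigma_1\times\cdots\times\sigma_r$; since $\mathrm{soc}(L\times\sigma_1)\times\sigma_2\times\cdots\times\sigma_r$ is a submodule and $\pi$ is the unique simple submodule of the ambient product, $\pi$ embeds there too, whence $D_{\sigma_2\times\cdots\times\sigma_r}(\pi)=\mathrm{soc}(L\times\sigma_1)$. Thus $T=\mathrm{soc}(L\times\sigma_1)$ and $D_{\sigma_1}(T)=L\neq 0$ immediately. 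In other words, the clean way to close your gap \emph{is} the paper's embedding-and-peel argument; once you have it, the rest of your $\psi$-image analysis becomes unnecessary.
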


\begin{proof}
Let $\tau=D_{\sigma_1 \times \ldots \times \sigma_r}(\pi)$. Then 
\[  \pi \hookrightarrow \tau \times \sigma_1 \times \ldots \times \sigma_r .
\]
Let $I_{\sigma_1}(\tau)$ be the unique simple submodule of $\tau \times \sigma_1$. The unique submodule must factor through the embedding:
\[   I_{\sigma_1}(\tau) \times \sigma_{2}\times \ldots \times \sigma_r \hookrightarrow \tau \times \sigma_1 \times \ldots \times \sigma_r . 
\]
Then inductively, we have that $D_{\sigma_{2}}\circ \ldots \circ D_{\sigma_r}(\pi) \cong I_{\sigma_1}(\tau)$. This implies that $D_{\sigma_1}\circ D_{\sigma_{2}}\circ \ldots \circ D_{\sigma_r}(\pi)\cong \tau$. 
\end{proof}
\subsection{The segment case}

We now consider a special case of the product functor and we recall the following result shown in \cite{Ch22}. For $\Delta \in \mathrm{Seg}$, let $\mathcal C=\mathcal C_{\Delta}$ be the full subcategory of $\mathrm{Alg}_f(G_n)$ whose objects $\pi$ satisfy the property that for any simple composition factor $\tau$ in $\pi$ and any $\sigma \in \mathrm{csupp}(\tau)$, $\sigma \in \Delta$ (c.f. \cite[Section 9.1]{Ch22}). 

Let $k=l_{abs}(\Delta)$. The product functor 
\[ \times_{\Delta, \mathcal C}: \mathcal C_{\Delta} \rightarrow \mathrm{Alg}(G_{n+k}) \]
as
\[  \times_{\Delta, \mathcal C}(\pi)= \langle \Delta \rangle \times \pi .
\]

\begin{lemma} \label{lem fully faithful segment case}
Let $\Delta\in \mathrm{Seg}$. Then $\times_{\Delta, \mathcal C}$ is a fully-faithful functor.
\end{lemma}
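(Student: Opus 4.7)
The plan is to reduce full faithfulness of $\times_{\Delta,\mathcal C}$ to an adjunction identity involving the big derivative $\mathbb D_{\langle \Delta \rangle}$. By Frobenius reciprocity together with Definition \ref{def big derivatives}, the big derivative is right adjoint to $\times_{\langle\Delta\rangle}$, so for any $\pi_1, \pi_2 \in \mathcal C_\Delta$ there is a natural isomorphism
\[
\mathrm{Hom}_{G_{n+k}}(\langle \Delta \rangle \times \pi_1, \langle \Delta \rangle \times \pi_2) \;\cong\; \mathrm{Hom}_{G_n}\bigl(\pi_1,\, \mathbb D_{\langle \Delta \rangle}(\langle \Delta \rangle \times \pi_2)\bigr).
\]
The unit of this adjunction gives a natural map $\eta_{\pi_2}: \pi_2 \to \mathbb D_{\langle \Delta \rangle}(\langle \Delta \rangle \times \pi_2)$, and full faithfulness amounts to showing that $\mathrm{Hom}_{G_n}(\pi_1, \eta_{\pi_2})$ is bijective for every $\pi_1 \in \mathcal C_\Delta$.

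Next I would reduce to $\pi_2$ irreducible. Exactness of the product functor together with left-exactness of the big derivative allows a d\'evissage on a composition series of $\pi_2$: the five-lemma reduces the bijectivity of $\mathrm{Hom}_{G_n}(\pi_1, \eta_{\pi_2})$ to the analogous statement for each simple composition factor of $\pi_2$. For simple $\pi_2 = \langle \mathfrak m \rangle \in \mathcal C_\Delta$, the segments in $\mathfrak m$ are subsegments of $\Delta$ and thus unlinked to $\Delta$, so Proposition \ref{thm matching irreducibility} gives that $\langle \Delta \rangle \times \langle \mathfrak m \rangle = \langle \Delta + \mathfrak m \rangle$ is irreducible.

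Now I would compute $\mathbb D_{\langle \Delta \rangle}(\langle \Delta + \mathfrak m \rangle)$ by applying the geometric lemma of Section \ref{ss geo lemma} to $(\langle \Delta \rangle \times \langle \mathfrak m \rangle)_{N^-_{k,n}}$: its layers are parameterized by $(i_1, i_2)$ with $i_1 + i_2 = k$, each involving a parabolic induction of $\langle \Delta \rangle_{N_{i_1}} \boxtimes \langle \mathfrak m \rangle_{N_{i_2}}$ twisted by $\phi$. The segment Jacquet formula of Section \ref{ss jacquet segment steinberg} makes $\langle \Delta \rangle_{N_{i_1}}$ explicit, and the trivial layer at $i_1 = 0$ produces $\langle \Delta \rangle \boxtimes \langle \mathfrak m \rangle$, contributing exactly $\langle \mathfrak m \rangle$ under $\mathrm{Hom}_{G_k}(\langle \Delta \rangle, -)$, which accounts for the image of $\eta_{\pi_2}$.

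The main obstacle is to show that all remaining layers with $i_1 \geq 1$ contribute $G_n$-pieces invisible to $\mathrm{Hom}_{G_n}(\pi_1, -)$ for any $\pi_1 \in \mathcal C_\Delta$. Cuspidal support alone is inadequate, since all layers remain inside the Bernstein block of $\Delta + \mathrm{csupp}(\langle \mathfrak m \rangle)$. The argument will need to invoke Propositions \ref{prop mx multisegment direct summand} and \ref{prop composition factor mx multiseg} applied to $\pi_1$, combined with a multiplicity analysis for the appearance of $\langle \Delta \rangle \boxtimes \langle \mathfrak m \rangle$ in the relevant Jacquet module, to force the vanishing of such contributions. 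Once this bookkeeping is complete, the bijectivity of $\mathrm{Hom}_{G_n}(\pi_1, \eta_{\pi_2})$ follows formally from the adjunction and the d\'evissage above.
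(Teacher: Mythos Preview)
The paper's own proof is a one-line citation of \cite[Theorem~9.1]{Ch22}, so there is no internal argument to compare against. Your adjunction framework is correct: $\mathbb D_{\langle\Delta\rangle}$ is right adjoint to $\times_{\langle\Delta\rangle}$, and full faithfulness amounts to the unit $\eta_{\pi_2}:\pi_2\to\mathbb D_{\langle\Delta\rangle}(\langle\Delta\rangle\times\pi_2)$ being an isomorphism for every $\pi_2\in\mathcal C_\Delta$. The d\'evissage to simple $\pi_2$ is also fine, although you should phrase it as proving that $\eta_{\pi_2}$ itself is an isomorphism (the five-lemma then applies cleanly, using exactness of $\times_{\langle\Delta\rangle}$ and left-exactness of $\mathbb D_{\langle\Delta\rangle}$), rather than the weaker statement about $\mathrm{Hom}(\pi_1,\eta_{\pi_2})$.

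The genuine gap is the last step: for simple $\pi_2=\langle\mathfrak m\rangle$ you must show $\mathbb D_{\langle\Delta\rangle}(\langle\Delta+\mathfrak m\rangle)\cong\langle\mathfrak m\rangle$, which is precisely Corollary~\ref{cor big derivatives segment case}. In the paper that corollary is \emph{deduced from} Lemma~\ref{lem fully faithful segment case}, so you need an independent argument. Your plan to kill the non-bottom geometric-lemma layers by invoking Propositions~\ref{prop mx multisegment direct summand} and~\ref{prop composition factor mx multiseg} does not work: those propositions are formulated for $\mathrm{St}$-type derivatives and $\Delta$-saturated multisegments, not for the $\langle\,\cdot\,\rangle$-derivatives arising here, and there is no evident translation. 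Moreover, a naive layer-by-layer $\mathrm{Hom}_{G_k}(\langle\Delta\rangle,-)$ vanishing is not what is needed, since left-exactness only gives an embedding $\langle\mathfrak m\rangle\hookrightarrow\mathbb D_{\langle\Delta\rangle}(\langle\Delta+\mathfrak m\rangle)$; the surjectivity of $\eta$ requires controlling how the layers glue, or equivalently a length/multiplicity bound on $\mathbb D_{\langle\Delta\rangle}(\langle\Delta+\mathfrak m\rangle)$. If you try to close this via indecomposability (arguing that a length-$2$ submodule of $\mathbb D_{\langle\Delta\rangle}(\langle\Delta+\mathfrak m\rangle)$ would force $\langle\Delta\rangle\times E$ to split for some indecomposable $E$), you land on Theorem~\ref{thm indecomposable length 2}, whose proof uses Corollary~\ref{cor big derivative multisegment} and hence again Lemma~\ref{lem fully faithful segment case}. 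In short, the input from \cite{Ch22} is not reproducible with the tools developed here, and your sketch does not supply a substitute.
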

\begin{proof}
This is a special case of \cite[Theorem 9.1]{Ch22}.
\end{proof}



\begin{corollary} \label{cor big derivative multisegment}
Let $\Delta \in \mathrm{Seg}$. Let $\mathfrak m$ be a multisegment with all segments equal to $\Delta$. Let $\mathfrak n$ be a submultisegment of $\mathfrak m$. Then $\mathbb D_{\langle \mathfrak n \rangle}(\langle \mathfrak m \rangle)=\langle \mathfrak m-\mathfrak n \rangle$.
\end{corollary}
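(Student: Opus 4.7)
The plan is to reduce the general statement to the segment-multiplicity-one case handled by Corollary \ref{cor big derivatives segment case}, using the composition formula from Proposition \ref{prop composition of derivatives}.

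Let $j$ be the multiplicity of $\Delta$ in $\mathfrak n$ and let $k$ be its multiplicity in $\mathfrak m$, so that $j\leq k$. The first step will be to show that
\[
\langle \mathfrak n \rangle \;\cong\; \underbrace{\langle \Delta \rangle \times \cdots \times \langle \Delta \rangle}_{j \text{ times}}.
\]
This follows from iteratively applying Proposition \ref{thm matching irreducibility}. Two copies of the same segment $\Delta$ are not linked, so the sets $X^{\Delta}_{i\Delta}$ and $Y^{\Delta}_{i\Delta}$ are both empty for every $i\geq 0$. Thus $LC(i\Delta,\Delta)$ and $RC(i\Delta,\Delta)$ hold vacuously, and $\langle i\Delta \rangle \times \langle \Delta \rangle$ is irreducible. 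An induction on $j$, combined with the fact that $\langle j\Delta \rangle$ is the unique simple submodule of $\zeta(j\Delta) = \langle \Delta \rangle^{\times j}$, gives the desired isomorphism.

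The second step is to apply Proposition \ref{prop composition of derivatives} with $\sigma_1=\cdots=\sigma_j = \langle \Delta \rangle$ (whose product is irreducible by the previous paragraph), obtaining
\[
\mathbb D_{\langle \mathfrak n\rangle}(\langle \mathfrak m\rangle) \;\cong\; \underbrace{\mathbb D_{\langle \Delta\rangle} \circ \cdots \circ \mathbb D_{\langle \Delta\rangle}}_{j \text{ times}}(\langle \mathfrak m\rangle) \;=\; \mathbb D_{\Delta}^{j}(\langle \mathfrak m\rangle).
\]
The third step is then a straightforward iteration of Corollary \ref{cor big derivatives segment case}: at stage $i\in\{0,1,\ldots,j-1\}$ the multisegment $\mathfrak m - i\Delta$ consists of $k-i\geq 1$ copies of $\Delta$ (each trivially contained in $\Delta$), so the hypotheses of that corollary apply and yield $\mathbb D_{\Delta}(\langle \mathfrak m-i\Delta\rangle) = \langle \mathfrak m-(i+1)\Delta\rangle$. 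After $j$ applications we arrive at $\langle \mathfrak m - j\Delta \rangle = \langle \mathfrak m - \mathfrak n\rangle$.

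The only nonroutine step is the first one, and even there the obstruction disappears immediately because equal segments are not linked in the sense of Section \ref{ss intersection union}. Everything else is a mechanical combination of Proposition \ref{prop composition of derivatives} and Corollary \ref{cor big derivatives segment case}.
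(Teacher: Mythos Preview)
Your proof is correct and follows essentially the same approach as the paper, which simply cites Proposition \ref{prop composition of derivatives} and Corollary \ref{cor big derivatives segment case}. You have merely filled in the natural details: the irreducibility of $\langle \Delta\rangle^{\times j}$ (needed to invoke Proposition \ref{prop composition of derivatives}) and the inductive application of Corollary \ref{cor big derivatives segment case}.
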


\begin{proof}
This follows from Proposition \ref{prop composition of derivatives}, \cite[Corollary 9.2]{Ch22} and Lemma \ref{lem fully faithful segment case}. 
\end{proof}

One may also compare the above two statements with Lemma \ref{lem big derivative st} and Remark \ref{rmk derivative with opposite} below.

\section{Indecomposability under product functor: isomorphic cases }

\subsection{Indecomposibility}

The following result is well-known (see \cite[Proposition 2.3]{Ta90}), but we shall use some similar computations in the proof of Theorem \ref{thm indecomposable length 2} and so we sketch some main steps in the following proof.

\begin{lemma} \label{lem irreducible multisegment sum}
Let $\mathfrak m_1, \mathfrak m_2 \in \mathrm{Mult}$. Suppose $\langle \mathfrak m_1 \rangle \times \langle \mathfrak m_2 \rangle$ is irreducible. Then 
\[ \langle \mathfrak m_1 \rangle \times \langle \mathfrak m_2 \rangle \cong \langle \mathfrak m_1+\mathfrak m_2 \rangle .\]
\end{lemma}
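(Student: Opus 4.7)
My plan is to identify the unique irreducible representation using the Zelevinsky classification together with a Grothendieck group calculation.

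The key starting point is the embedding $\langle \mathfrak m_i\rangle \hookrightarrow \zeta(\mathfrak m_i)$ coming from the Zelevinsky classification. Since parabolic induction is exact, this gives
\[
\langle \mathfrak m_1\rangle\times\langle \mathfrak m_2\rangle \hookrightarrow \zeta(\mathfrak m_1)\times \zeta(\mathfrak m_2).
\]
In the Grothendieck group, the product is commutative, so $[\zeta(\mathfrak m_1)\times \zeta(\mathfrak m_2)] = [\zeta(\mathfrak m_1+\mathfrak m_2)]$, and Zelevinsky theory tells us $\zeta(\mathfrak m_1+\mathfrak m_2)$ contains $\langle \mathfrak m_1+\mathfrak m_2\rangle$ with multiplicity one and its other composition factors are $\langle \mathfrak n\rangle$ with $\mathfrak n<_Z\mathfrak m_1+\mathfrak m_2$.

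Next I would compare multiplicities of $\langle \mathfrak m_1+\mathfrak m_2\rangle$ on both sides of the expansion
\[
[\zeta(\mathfrak m_1)\times\zeta(\mathfrak m_2)] \;=\; \sum_{\mathfrak n\leq_Z\mathfrak m_1}\sum_{\mathfrak n'\leq_Z \mathfrak m_2} a_{\mathfrak n}a'_{\mathfrak n'}\,[\langle \mathfrak n\rangle\times\langle \mathfrak n'\rangle],
\]
where $a_{\mathfrak m_i}=1$. Any composition factor of $\langle \mathfrak n\rangle\times\langle \mathfrak n'\rangle$ is of the form $\langle \mathfrak p\rangle$ with $\mathfrak p\leq_Z \mathfrak n+\mathfrak n'$, and the crucial monotonicity claim is that $\mathfrak n+\mathfrak n'\leq_Z \mathfrak m_1+\mathfrak m_2$ with equality only when $\mathfrak n=\mathfrak m_1$ and $\mathfrak n'=\mathfrak m_2$. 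This is immediate because the intersection-union operations applied to $\mathfrak m_1$ (resp.\ $\mathfrak m_2$) can be done inside $\mathfrak m_1+\mathfrak m_2$ without touching the other summand. Consequently the only contribution to the coefficient of $\langle \mathfrak m_1+\mathfrak m_2\rangle$ in $[\zeta(\mathfrak m_1)\times\zeta(\mathfrak m_2)]$ comes from $\langle \mathfrak m_1\rangle\times\langle \mathfrak m_2\rangle$, and that contribution equals $1$.

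Putting these together, $\langle \mathfrak m_1+\mathfrak m_2\rangle$ appears as a composition factor of $\langle \mathfrak m_1\rangle\times\langle \mathfrak m_2\rangle$ (with multiplicity $1$). Since $\langle \mathfrak m_1\rangle\times\langle \mathfrak m_2\rangle$ is assumed irreducible, it must coincide with $\langle \mathfrak m_1+\mathfrak m_2\rangle$, as desired. The only mild subtlety is justifying the monotonicity statement $\mathfrak n+\mathfrak n'\leq_Z \mathfrak m_1+\mathfrak m_2$ (and strict inequality when one of the inclusions is strict), but this is an elementary combinatorial observation about the intersection-union process recalled in Section~\ref{ss intersection union}, so I do not expect any serious obstacle.
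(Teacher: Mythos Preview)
Your argument is correct and is essentially the classical Grothendieck-group proof of this well-known fact. The monotonicity step you flag as a ``mild subtlety'' is indeed harmless: using Zelevinsky's characterization of $\leq_Z$ via the functions $f_{\mathfrak m}(\Delta)=\#\{\Delta'\in\mathfrak m:\Delta\subseteq\Delta'\}$, one has $f_{\mathfrak n+\mathfrak n'}=f_{\mathfrak n}+f_{\mathfrak n'}$, so $f_{\mathfrak n}\geq f_{\mathfrak m_1}$, $f_{\mathfrak n'}\geq f_{\mathfrak m_2}$ together with $f_{\mathfrak n}+f_{\mathfrak n'}=f_{\mathfrak m_1}+f_{\mathfrak m_2}$ forces both equalities, and $f_{\mathfrak m}$ determines $\mathfrak m$.

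The paper, however, takes a different route: it shows directly that
\[
\mathrm{Hom}_{G_n}\bigl(\langle\mathfrak m_1\rangle\times\langle\mathfrak m_2\rangle,\ \zeta(\mathfrak m_1+\mathfrak m_2)\bigr)\cong\mathbb C
\]
by writing $\zeta(\mathfrak m_1+\mathfrak m_2)=\langle\mathfrak n_{\Delta_1}\rangle\times\cdots\times\langle\mathfrak n_{\Delta_r}\rangle$, applying Frobenius reciprocity, and analysing the layers of $(\langle\mathfrak m_1\rangle\times\langle\mathfrak m_2\rangle)_N$ coming from the geometric lemma; a cuspidal-support argument singles out a unique contributing layer, and induction on the number of distinct segments finishes the computation. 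Your approach is shorter and uses only the composition-factor description of $\zeta(\mathfrak m)$ from \cite{Ze80}. The paper's approach is deliberately more hands-on because, as it explicitly says, the very same Frobenius-reciprocity-plus-geometric-lemma analysis is reused verbatim in the proof of Theorem~\ref{thm indecomposable length 2}, where one must control a Hom space involving a length-two module $\pi''$ rather than an irreducible one; a pure Grothendieck-group count would not suffice there.
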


\begin{proof}
 Let $n=l_{abs}(\mathfrak m_1)+l_{abs}(\mathfrak m_2)$.  It suffices to show that 
\begin{eqnarray} \label{eqn hom isomorphic C product}
 \mathrm{Hom}_{G_n}(\langle \mathfrak m_1 \rangle \times \langle \mathfrak m_2 \rangle, \zeta(\mathfrak m_1+\mathfrak m_2 \rangle) \cong \mathbb C. 
\end{eqnarray}
Let $\mathfrak n=\mathfrak m_1+\mathfrak m_2$. Let $\Delta_1, \ldots , \Delta_r$ be all the segments such that $\mathfrak n_{\Delta_i}\neq \emptyset$. We shall label the segments such that $\Delta_1 \not\leq_b \ldots \not\leq_b \Delta_r$. 

Let $s_i=l_{abs}((\mathfrak m_1)_{\Delta_i})$ and let $t_i=l_{abs}((\mathfrak m_2)_{\Delta_i})$ for $i=1, \ldots, r$. Let $u_i=s_i+t_i$. Let 
\[  G'= G_{u_1}\times \ldots \times G_{u_r} ,
\]
and let 
\[  G''=(G_{s_1}\times G_{t_1})\times \ldots \times (G_{s_r}\times G_{t_r}) .
\]

Let $n_1=l_{abs}(\mathfrak m_1)$ and let $n_2=l_{abs}(\mathfrak m_2)$. Note that 
\[ \zeta(\mathfrak m_1+\mathfrak m_2) =\langle \mathfrak n_{\Delta_1} \rangle \times \ldots \times \langle \mathfrak n_{\Delta_r} \rangle .\]
We now apply the Frobenius reciprocity:
\[  \mathrm{Hom}_{G_{u_1}\times G_{n-u_1}}(\langle \mathfrak m_1 \rangle \times \langle \mathfrak m_2 \rangle, \zeta(\mathfrak n_{\Delta_1}) \times \ldots \times \zeta(\mathfrak n_{\Delta_r})) .\]
Then, by Lemma \ref{lem contributing factor of geometric lemma}, a possible layer that can contribute to the non-zero Hom is 
\[ (*)\quad  \mathrm{Ind}^{G_{u_1}\times G_{n-u_1}}_{P} (\langle \mathfrak m_1 \rangle_{N_{n_1-s_1}} \boxtimes \langle \mathfrak m_2 \rangle_{N_{n_2-t_1}} \rangle)^{\phi} ,
\]
where 
\begin{itemize}
\item the superscript $\phi$ is to twist the $G_{s_1}\times G_{n_1-s_1}\times G_{t_1}\times G_{n_2-t_1}$-action to $G_{s_1}\times G_{t_1} \times G_{n_1-s_1}\times G_{n_2-t_1}$ in an obvious way;
\item $P$ is the standard parabolic subgroup containing $G_{s_1}\times G_{t_1} \times G_{n_1-s_1}\times G_{n_2-t_1}$.
\end{itemize}
 Indeed, this is the only possible layer by Lemma \ref{lem contributing factor of geometric lemma}.


In such layer (*), by Lemma \ref{lem contributing factor of geometric lemma} again, the only direct summand that can (possibly) contribute the Hom via Frobenius reciprocity is 
\[   \langle (\mathfrak m_1)_{\Delta_1}\rangle \times \langle  (\mathfrak m_2)_{\Delta_1} \rangle \boxtimes \langle \mathfrak m_1-(\mathfrak m_1)_{\Delta_1} \rangle \times \langle \mathfrak m_2-(\mathfrak m_2)_{\Delta_1} \rangle .
\]
Now we inductively have that $\mathrm{Hom}_{G_{n-s_1-t_1}}(\langle \mathfrak m_1-(\mathfrak m_1)_{\Delta_1}\rangle \times \langle \mathfrak m_2-(\mathfrak m_2)_{\Delta_1} \rangle, \zeta(\mathfrak n-\mathfrak n_{\Delta_1})) \cong \mathbb C$ and so K\"unneth's formula gives (\ref{eqn hom isomorphic C product}).
\end{proof}

The idea of proving the following theorem is that one first enlarges to some modules close to standard modules. In particular, one uses Lemma \ref{lem embedding indecomposable to std} for a module of length $2$. Then one applies Frobenius reciprocity and does some analysis as in the proof of Lemma \ref{lem irreducible multisegment sum}. 

\begin{theorem} \label{thm indecomposable length 2}
Let $\pi_1, \pi_2 \in \mathrm{Irr}$. Suppose $\pi_1 \times \pi_2$ is irreducible. Let $\pi$ be a representation of length $2$ with the two simple composition factors isomorphic to $\pi_2$. Then $\pi$ is indecomposable if and only if $\pi_1 \times \pi$ is indecomposable.
\end{theorem}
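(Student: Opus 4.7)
The implication ``$\pi_1 \times \pi$ indecomposable $\Rightarrow \pi$ indecomposable'' is immediate from exactness of parabolic induction: a direct sum $\pi \cong \pi_2 \oplus \pi_2$ would yield $\pi_1 \times \pi \cong (\pi_1 \times \pi_2)^{\oplus 2}$. I therefore focus on the converse. Assume $\pi$ is indecomposable. Since $\pi_1 \times \pi_2$ is irreducible and parabolic induction is exact, $\pi_1 \times \pi$ has length two with both composition factors isomorphic to the simple module $\pi_1 \times \pi_2$. It therefore suffices to prove
\[
\dim_{\mathbb C} \mathrm{Hom}_{G_{k+l}}(\pi_1 \times \pi_2,\, \pi_1 \times \pi) = 1,
\]
which forces $\pi_1 \times \pi$ to have a unique simple submodule. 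Equivalently, via the long exact sequence associated with $0 \to \pi_1 \times \pi_2 \to \pi_1 \times \pi \to \pi_1 \times \pi_2 \to 0$ and $\mathrm{End}(\pi_1 \times \pi_2) = \mathbb C$, the task is to show the induced extension does not split.

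Following the hint, I enlarge $\pi$ via Lemma \ref{lem embedding indecomposable to std}. Writing $\pi_2 = \langle \mathfrak m \rangle$ and $\pi_1 = \langle \mathfrak n \rangle$, there exists an indecomposable $\pi''$ fitting in $0 \to \zeta(\mathfrak m) \to \pi'' \to \zeta(\mathfrak m) \to 0$ with $\pi \hookrightarrow \pi''$. Composing with the canonical embedding $\pi_1 \hookrightarrow \zeta(\mathfrak n)$ produces
\[
\pi_1 \times \pi \hookrightarrow \pi_1 \times \pi'' \hookrightarrow \zeta(\mathfrak n) \times \pi'',
\]
and the outermost module sits in $0 \to \zeta(\mathfrak n + \mathfrak m) \to \zeta(\mathfrak n) \times \pi'' \to \zeta(\mathfrak n + \mathfrak m) \to 0$. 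Applying $\mathrm{Hom}(\pi_1 \times \pi_2,-)$ and invoking Lemma \ref{lem irreducible multisegment sum}, which gives $\mathrm{Hom}(\pi_1 \times \pi_2, \zeta(\mathfrak n + \mathfrak m)) \cong \mathbb C$, the associated long exact sequence produces the a priori bound $\dim \mathrm{Hom}(\pi_1 \times \pi_2, \pi_1 \times \pi) \leq 2$.

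To exclude the dimension-two case, I would carry out the Frobenius-reciprocity and geometric-lemma analysis in analogy with the proof of Lemma \ref{lem irreducible multisegment sum}. Via Bernstein's second adjunction,
\[
\mathrm{Hom}(\pi_1 \times \pi_2, \pi_1 \times \pi) \cong \mathrm{Hom}_{G_k \times G_l}(\pi_1 \boxtimes \pi_2,\, (\pi_1 \times \pi)_{N_{k,l}^-}),
\]
and the open-cell layer of the geometric lemma gives an embedding $\pi_1 \boxtimes \pi \hookrightarrow (\pi_1 \times \pi)_{N_{k,l}^-}$ whose contribution, by K\"unneth, is $\mathrm{End}(\pi_1) \otimes \mathrm{Hom}(\pi_2, \pi) = \mathbb C$ (using the indecomposability of $\pi$ to get $\mathrm{Hom}(\pi_2, \pi) = \mathbb C$). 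I would then show that no additional Hom-element arises from the non-open-cell layers by comparing against the analogous decomposition for $(\pi_1 \times \pi_2)_{N_{k,l}^-}$, where irreducibility of $\pi_1 \times \pi_2$ already forces $\mathrm{Hom}(\pi_1 \times \pi_2, \pi_1 \times \pi_2) = \mathbb C$, and tracking the effect of the short exact sequence $0 \to (\pi_1 \times \pi_2)_{N^-} \to (\pi_1 \times \pi)_{N^-} \to (\pi_1 \times \pi_2)_{N^-} \to 0$ layer by layer. The main obstacle is this bookkeeping: one must parallel the cuspidal-support analysis of Lemma \ref{lem irreducible multisegment sum}, but now for a length-two extension rather than a standard Zelevinsky module, and verify that a hypothetical second linearly independent Hom-element would, upon projection to the open cell, furnish a splitting of the original extension $0 \to \pi_2 \to \pi \to \pi_2 \to 0$, contradicting indecomposability of $\pi$.
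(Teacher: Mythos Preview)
Your easy direction and the reduction to showing $\dim\mathrm{Hom}(\pi_1\times\pi_2,\pi_1\times\pi)=1$ are fine, but the core of the argument has a genuine gap. After passing via second adjointness to $\mathrm{Hom}_{G_k\times G_l}(\pi_1\boxtimes\pi_2,(\pi_1\times\pi)_{N^-})$, you assert that only the open-cell layer $\pi_1\boxtimes\pi$ can contribute. But there is no mechanism here to kill the other layers: in the proof of Lemma~\ref{lem irreducible multisegment sum} the layers die because the \emph{target} of the Hom, after Frobenius reciprocity, is $[\mathfrak n]=\langle\mathfrak n_{\Delta_1}\rangle\boxtimes\cdots\boxtimes\langle\mathfrak n_{\Delta_r}\rangle$, whose cuspidal support is so rigid that all but one layer are excluded by a cuspidal-support comparison. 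In your setup the ``target'' role is played by $\pi_1\boxtimes\pi_2$, an arbitrary irreducible, and no such comparison is available. Your fallback claim, that a second Hom-element would ``upon projection to the open cell furnish a splitting of $0\to\pi_2\to\pi\to\pi_2\to 0$'', is not justified: the open-cell layer $\pi_1\boxtimes\pi$ is a \emph{submodule} of $(\pi_1\times\pi)_{N^-}$, so there is no projection to it, and the inclusion $\mathrm{Hom}(\pi_1\boxtimes\pi_2,\pi_1\boxtimes\pi)\hookrightarrow\mathrm{Hom}(\pi_1\boxtimes\pi_2,(\pi_1\times\pi)_{N^-})$ goes the wrong way for an upper bound.

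The paper avoids this by computing a different Hom. It takes the dual of Lemma~\ref{lem embedding indecomposable to std} to obtain a \emph{surjection} $\pi''\twoheadrightarrow\pi$ with $\pi''$ an indecomposable extension of $\widetilde{\zeta(\mathfrak m_2)}$ by itself, and then shows $\mathrm{Hom}_{G_n}(\pi_1\times\pi'',\zeta(\mathfrak m_1+\mathfrak m_2))\cong\mathbb C$. Here the target $\zeta(\mathfrak n)=\mathrm{Ind}_P^{G_n}[\mathfrak n]$ is exactly what makes the cuspidal-support analysis of Lemma~\ref{lem irreducible multisegment sum} go through: after Frobenius reciprocity and the geometric lemma one is reduced to $\mathrm{Hom}_{G_{t_1}\times\cdots\times G_{t_r}}(\tau,\mathbb D_{\sigma_1}(\langle\mathfrak n_{\Delta_1}\rangle)\boxtimes\cdots\boxtimes\mathbb D_{\sigma_r}(\langle\mathfrak n_{\Delta_r}\rangle))$, where $\tau$ is the Levi counterpart of $\pi''$ under the equivalence of Corollary~\ref{cor equivalence of categories}. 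Two ingredients you did not invoke then finish the argument: Corollary~\ref{cor equivalence of categories} guarantees $\tau$ is indecomposable of length two (hence has a unique simple quotient), and Corollary~\ref{cor big derivative multisegment} identifies each $\mathbb D_{\sigma_i}(\langle\mathfrak n_{\Delta_i}\rangle)$ as irreducible. Your enlargement step sets up the right object $\pi''$ but then abandons it; the paper actually computes with $\pi''$, and it is precisely the standard-module target $\zeta(\mathfrak m_1+\mathfrak m_2)$---not $\pi_1\times\pi$---that makes the layer-by-layer analysis tractable.
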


\begin{proof}
Let $\mathfrak m_1$ and $\mathfrak m_2$ be multisegments such that 
\[  \pi_1 \cong \langle \mathfrak m_1 \rangle, \quad \pi_2 \cong \langle \mathfrak m_2 \rangle .
\]
Since $\pi_1 \times \pi_2$ is irreducible, $\pi_1 \times \pi_2 \cong \langle \mathfrak m_1 +\mathfrak m_2 \rangle$ by Lemma \ref{lem irreducible multisegment sum}. 

Note that the if direction is clear. We now prove the only if direction. By Lemma \ref{lem embedding indecomposable to std} and taking the dual, there exists $\pi'' \in \mathrm{Alg}_f$ such that 
\begin{itemize}
\item $\pi''$ admits a short exact sequence 
\[   0 \rightarrow \widetilde{\zeta}(\mathfrak m_2) \rightarrow \pi'' \rightarrow \widetilde{\zeta}(\mathfrak m_2) \rightarrow 0
\]
and;
\item $\pi'' \twoheadrightarrow \pi$.
\end{itemize}

Let $n=l_{abs}(\mathfrak m_1)+l_{abs}(\mathfrak m_2)$. We see that $\pi_1 \times \pi''$ is indecomposable if and only if $\mathrm{Hom}_{G_n}(\pi_1 \times \pi'', \langle \mathfrak m_1+\mathfrak m_2 \rangle) \cong \mathbb C$. To prove the latter one, it suffices to show that 
\[  \mathrm{Hom}_{G_n}(\pi_1 \times \pi'', \zeta(\mathfrak m_1 +\mathfrak m_2) ) \cong \mathbb C .
\]

Now we apply some similar strategy as the proof of Lemma \ref{lem irreducible multisegment sum}. Let $\mathfrak n=\mathfrak m_1+\mathfrak m_2$. Let $\Delta_1, \ldots, \Delta_r$ be all the distinct segments such that $\mathfrak n_{\Delta_i}\neq \emptyset$, and $\Delta_1 \not\leq_b \Delta_2 \not\leq_b \ldots \not\leq_b \Delta_r$. Let $s_i=l_{abs}((\mathfrak m_1)_{\Delta_i})$ and $t_i=l_{abs}((\mathfrak m_2)_{\Delta_i})$. Let $u_i=s_i+t_i$. 

Now we write 
\[ \zeta(\mathfrak n) = \zeta(\mathfrak n_{\Delta_1})\times \ldots \times \zeta(\mathfrak n_{\Delta_r}) .
\]
Let $G'=G(\mathfrak n)$. Let $P$ be the standard parabolic subgroup in $G_n$ containing $G'$ with the Levi decomposition $P=G'N$. Recall that $[ \mathfrak n ]=\langle \mathfrak n_{\Delta_1} \rangle \boxtimes \ldots \boxtimes \langle \mathfrak n_{\Delta_r} \rangle$ and so $\zeta(\mathfrak n)=\mathrm{Ind}_P^{G_n}[\mathfrak n]$. Now Frobenius reciprocity gives that:
\[ (*)\quad \mathrm{Hom}_{G_n}(\pi_1 \times \pi'', \zeta(\mathfrak n)) \cong \mathrm{Hom}_{G'}( (\pi_1\times \pi'')_N, [ \mathfrak n ]) .
\]
The analysis in the proof of Lemma \ref{lem irreducible multisegment sum} gives that the only possible layer contributing a non-zero Hom in (*) (via the geometric lemma on $(\pi_1\times \pi'')_N$) is of the form:
\[  \mathrm{Ind}_{\widetilde{P}}^{G'} ((\pi_1)_{N'} \boxtimes (\pi'')_{N''})^{\phi},
\]
where 
\begin{itemize}
\item $N'$ is the unipotent radical corresponding to the partition $(s_1, \ldots, s_r)$ and $N''$ is the unipotent radical corresponding to the partition $(t_1, \ldots, t_r)$;
\item the superscript $\phi$ is a twist sending $G_{s_1}\times \ldots \times G_{s_r} \times G_{t_1}\times \ldots \times G_{t_r}$ to $G_{s_1}\times G_{t_1}\times \ldots \times G_{s_r} \times G_{t_r}$ (by permutating the factors in an obvious way);
\item $\widetilde{P}$ is the standard parabolic subgroup in $G'$ containing $G_{s_1}\times G_{t_1}\times \ldots \times G_{s_r} \times G_{t_r}$. 
\end{itemize}

Thus, we have that:
\[  \mathrm{Hom}_{G_n}(\pi_1 \times \pi'', \zeta(\mathfrak n))  \cong \mathrm{Hom}_{G'}(\mathrm{Ind}_{\widetilde{P}}^{G'} ((\pi_1)_{N'} \boxtimes (\pi'')_{N''})^{\phi}, [ \mathfrak n ]) .
\]
Indeed, as in Lemma \ref{lem irreducible multisegment sum}, which uses Lemma \ref{lem contributing factor of geometric lemma} inductively, the only component in $(\mathrm{Ind}_{\widetilde{P}}^{G'} ((\pi_1)_{N'} \boxtimes (\pi'')_{N''})^{\phi}$ that can contribute to the nonzero Hom is:
\[  \mathrm{Ind}_{\widetilde{P}}^{G'}([ \mathfrak m_1 ] \boxtimes \tau)^{\phi} ,
\] 
where $\tau$ is the direct summand in $(\pi)_{N''}$ whose simple composition factors have the same cuspidal support as $[ \mathfrak m_2 ]$. Attributing to the multiple use of Lemma \ref{lem variation of cuspidal supprot}, we conclude that $\tau$ has length $2$ and both composition factors of $\tau$ are isomorphic to $[\mathfrak m_2]$. Thus we further have
\[(**) \quad \mathrm{Hom}_{G_n}(\pi_1 \times \pi'', \zeta(\mathfrak n))  \cong \mathrm{Hom}_{G'}(\mathrm{Ind}_{\widetilde{P}}^{G'} ([ \mathfrak m_1] \boxtimes \tau)^{\phi}, [ \mathfrak n ]) .
\]

As the functors described in the proof of Corollary \ref{cor equivalence of categories}, $\tau$ satisfies $\mathrm{Ind}^{G_{n_2}}_{P^*} \tau  \cong \pi''$, where $P^*$ is the standard parabolic subgroup containing $G_{t_1}\times \ldots \times G_{t_r}$, and Corollary \ref{cor equivalence of categories} implies that $\tau$ is indecomposable and of length $2$ with both factors isomorphic to $[\mathfrak m_2]$. In particular, $\tau$ has a unique simple quotient. 

Now we return to compute the latter Hom of (**). Let 
\[  G'' = G_{s_1}\times G_{t_1}\times \ldots \times G_{s_r}\times G_{t_r} .
\]
In such case, applying the second adjointness, such Hom is equal to 
\[  \mathrm{Hom}_{G''}(([ \mathfrak m_1 ]\boxtimes \tau))^{\phi}, [ \mathfrak n ]_{N^-}) ,
\]
where $N^-$ is the unipotent radical in $P_{s_1,t_1}^t\times \ldots \times P_{s_r, t_r}^t \subset G_{u_1}\times \ldots \times G_{u_r}$. By using Hom-tensoring adjointness, the previous Hom turns to:
\[ (***) \quad \mathrm{Hom}_{G_{t_1}\times \ldots \times G_{t_r}}(\tau, \mathrm{Hom}_{G_{s_1}\times \ldots \times G_{s_r}}([ \mathfrak m_1 ], [ \mathfrak n ])) ,
\]
where we regard $[ \mathfrak n ]$ as a $G_{s_1}\times \ldots \times G_{s_r}$-representation via the embedding: 
\[  (g_1, \ldots g_r) \mapsto  (\begin{pmatrix} g_1 & \\ & I_{t_1} \end{pmatrix}, \ldots, \begin{pmatrix} g_r & \\ & I_{t_r} \end{pmatrix}).
\]
Let $\sigma_i=\langle (\mathfrak m_1)_{\Delta_i} \rangle$. Finally, using K\"unneth formula on (***) and combining with (*), we have that: 
\[ \mathrm{Hom}_{G_n}(\pi_1 \times \pi'', \zeta(\mathfrak n))  \cong \mathrm{Hom}_{G_{t_1}\times \ldots \times G_{t_r}}( \tau, \mathbb D_{\sigma_1}(\langle \mathfrak n_{\Delta_1} \rangle) \boxtimes \ldots \boxtimes \mathbb D_{\sigma_r}(\langle \mathfrak n_{\Delta_r} \rangle))
\]
and so, by Corollary \ref{cor big derivative multisegment}, 
\[ \mathrm{Hom}_{G_n}(\pi_1 \times \pi'', \zeta(\mathfrak n))  \cong  \mathrm{Hom}_{G_{t_1}\times \ldots \times G_{t_r}}(\tau, \langle \mathfrak n'_1 \rangle \boxtimes \ldots \boxtimes \langle \mathfrak n'_r \rangle),
\]
where $\mathfrak n_i'=(\mathfrak n)_{\Delta_i}-(\mathfrak m_1)_{\Delta_i}$. Now, as discussed above $\tau$ has only unique simple quotient and we so have that the Hom space has dimension $1$, as desired. Thus, we have $\mathrm{Hom}_{G_n}(\pi_1 \times \pi'', \zeta(\mathfrak m_1 +\mathfrak m_2) ) \cong \mathbb C$ and so $\mathrm{Hom}_{G_n}(\pi_1\times \pi'', \langle \mathfrak m_1+\mathfrak m_2 \rangle) \cong \mathbb C$.
\end{proof}

\begin{remark} \label{rmk example no preserve self ext}
In general, the parabolic induction does not preserve self-extensions. For example, we consider $\pi =\langle [0] \rangle$. Let $\tau=(\pi \times \pi)_{N_1}$. Then 
\[ \mathrm{Ind}_{P_{1,1}}^{G_2} \tau \cong \pi \times\pi \oplus \pi \times \pi. \] 
\end{remark}

\section{Fully-faithfulness of the product functor}

\subsection{A criteria for proving fully-faithfulness}

 We recall the following criteria for proving fully-faithfulness:

\begin{lemma} \label{lem fully faithful functor criteria} \cite[Lemma A.1]{Ch22}
Let $\mathcal A$ and $\mathcal B$ be abelian Schurian $k$-categories, where $k$ is a field. Let $F: \mathcal A \rightarrow \mathcal B$ be an additive exact functor. Suppose the following holds:
\begin{enumerate}
\item any object of $\mathcal A$ has finite length;
\item for any simple objects $X$ and $Y$ in $\mathcal A$, the induced map of $F$ from $\mathrm{Ext}^1_{\mathcal A}(X, Y)$ to $\mathrm{Ext}^1_{\mathcal B}(F(X), F(Y))$ is an injection;
\item $F(X)$ is a simple object in $\mathcal B$ if $X$ is a simple object in $\mathcal A$;
\item for any simple objects $X$ and $Y$ in $\mathcal A$, $X \cong Y$ if and only if $F(X) \cong F(Y)$. 
\end{enumerate}
Then $F$ is a fully-faithful functor.
\end{lemma}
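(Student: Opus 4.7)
The plan is to reduce everything to the behavior of $F$ on simple objects, where conditions (3) and (4) together with Schur's lemma (valid because the categories are Schurian) force the map $\mathrm{Hom}_{\mathcal A}(X,Y)\to\mathrm{Hom}_{\mathcal B}(F(X),F(Y))$ to be an isomorphism: both sides are simultaneously $0$ or $k$, and since $F(\mathrm{id}_X)=\mathrm{id}_{F(X)}$ the induced map sends the generator to the generator. The general case will then be extracted by induction on composition lengths combined with long exact sequences of $\mathrm{Ext}$ and the five lemma.

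Before the induction, I would record that $F$ is faithful: if $X\neq 0$, assumption (1) gives a simple subobject $S\hookrightarrow X$, and by exactness of $F$ together with (3), $F(S)\hookrightarrow F(X)$ is a nonzero simple subobject, so $F(X)\neq 0$. Factoring any nonzero morphism through its image then shows $F(f)\neq 0$ whenever $f\neq 0$. With faithfulness in hand, the proof proceeds in three stages. First, for $A$ simple and $B$ arbitrary, I would prove bijectivity of $F$ on $\mathrm{Hom}(A,B)$ by induction on $\ell(B)$: the base case is the simple-vs-simple argument above, and for the inductive step one chooses $0\to B'\to B\to B''\to 0$ with $B'$ simple, writes the long exact $\mathrm{Hom}(A,-)$ sequence continuing one step into $\mathrm{Ext}^1(A,B')$, compares it term by term to its image under $F$, and applies the five lemma using the inductive hypothesis on the Hom terms and assumption (2) on the trailing $\mathrm{Ext}^1$ term.

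Second, still for $A$ simple, I would prove injectivity of the induced map $\mathrm{Ext}^1(A,B)\to\mathrm{Ext}^1(F(A),F(B))$, again by induction on $\ell(B)$. Condition (2) is the base case, and the inductive step chases the same pair of long exact sequences one position further to the right, using the first stage to control the $\mathrm{Hom}(A,B'')$ term and the inductive hypothesis to control the $\mathrm{Ext}^1(A,B'')$ term. Finally, for $A$ and $B$ both arbitrary, I would induct on $\ell(A)$: choose $0\to A'\to A\to A''\to 0$ with $A''$ simple, apply $\mathrm{Hom}(-,B)$, and compare to the $F$-image. The $\mathrm{Hom}(A'',B)$ term is handled by the first stage, the $\mathrm{Hom}(A',B)$ term by the outer induction, and the $\mathrm{Ext}^1(A'',B)$ term is injective by the second stage, so the five lemma concludes.

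No step is genuinely subtle; the main point of care is arranging the nested inductions (outer on $\ell(A)$, with an inner induction on $\ell(B)$ inside each of the first two stages) so that condition (2), which is hypothesized only for pairs of simples, actually suffices to seed every five-lemma invocation. Compatibility of the connecting homomorphisms with $F$ is automatic from exactness of $F$ and the naturality of the $\mathrm{Ext}$ long exact sequence, so no additional work is needed there.
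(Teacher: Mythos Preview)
Your argument is correct and complete. The nested induction (first on $\ell(B)$ with $A$ simple to get bijectivity on $\mathrm{Hom}$ and then injectivity on $\mathrm{Ext}^1$, then on $\ell(A)$ for general $A$) is exactly what is needed, and your five-lemma invocations are properly seeded by condition~(2) at each base case. The remark at the end about naturality of the connecting maps under an exact functor is the only point that could conceivably require justification, and you have flagged it correctly.

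As for comparison: the paper does not actually give its own proof of this lemma. It simply cites \cite[Lemma~A.1]{Ch22} and remarks that the proof there applies. Your proposal is therefore strictly more informative than what appears in the present paper. The argument you give is the standard one, and it is reasonable to expect that the proof in \cite{Ch22} follows the same pattern, since there is essentially only one natural route: reduce to simples via composition series and propagate using long exact sequences.
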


The original statement of \cite[Lemma A.1]{Ch22} is stated in a slightly different way, but the proof still applies.

We remark that elements in $\mathrm{Ext}^1_{\mathcal A}(X, Y)$ can be interpreted as short exact sequences in Yoneda extensions \cite{Ma63}, and the addition corresponds to the Baer sum. Then the exact functor $F: \mathcal A\rightarrow \mathcal B$ sends a short exact sequence to a short exact sequence and so induces a map from $\mathrm{Ext}^1_{\mathcal A}(X,Y)$ to $\mathrm{Ext}^1_{\mathcal B}(F(X), F(Y))$ above.

If we consider the full subcategory $\mathcal B'$ of $\mathcal B$ containing all $F(X)$ for objects $X$ in $\mathcal A$, then $F$ defines an equivalence of categories from $\mathcal A$ to $\mathcal B'$ \cite[Lemma 4.2.19]{St}. We remark that $\mathcal B'$ may not be Serre in $\mathcal B$.

\subsection{Product functor}

Recall that $\mathrm{Alg}_{\pi}(G_n)$ is defined in Section \ref{ss indecomposable repn}.

\begin{theorem} \label{thm fully faithful functor}
Let $\pi \in \mathrm{Irr}$. Let $k=\mathrm{deg}(\pi)$. Let $\mathcal C=\mathrm{Alg}_{\pi}(G_n)$. The functor $\times_{\pi, \mathcal C}$ defined in Section \ref{ss product functor} is fully-faithful i.e.
\[  \mathrm{Hom}_{G_{n+k}}(\times_{\pi, \mathcal C}(\tau_1), \times_{\pi, \mathcal C}(\tau_2)) \cong \mathrm{Hom}_{G_n}(\tau_1, \tau_2) 
\]
for any $\tau_1, \tau_2 \in \mathrm{Alg}_{\pi}(G_n)$. 
\end{theorem}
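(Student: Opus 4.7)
The plan is to apply the categorical criterion of Lemma \ref{lem fully faithful functor criteria} to the functor $F = \times_{\pi, \mathcal C}$, taking $\mathcal A = \mathcal C = \mathrm{Alg}_{\pi}(G_n)$ and $\mathcal B$ to be the Bernstein components of $\mathrm{Alg}(G_{n+k})$ receiving the image. The simple objects of $\mathcal C$ are exactly $\langle \mathfrak m\rangle$ with $\mathfrak m\in\mathcal M_{\pi}$, and we must verify conditions (1)--(4) of that lemma.

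Conditions (1), (3), (4) should be essentially immediate from what is already set up. Condition (1) is built into the definition of $\mathrm{Alg}_{\pi}(G_n)$. For (3), if $X=\langle\mathfrak m\rangle$ with $\mathfrak m\in\mathcal M_{\pi}$, then by definition of $\mathcal M_{\pi}$ together with Theorem \ref{thm intersection-union closed}, $\pi\times \langle\mathfrak m\rangle$ is irreducible; applying Lemma \ref{lem irreducible multisegment sum} we even get $F(\langle\mathfrak m\rangle)\cong \langle \mathfrak m(\pi)+\mathfrak m\rangle$, where $\pi\cong\langle\mathfrak m(\pi)\rangle$. This explicit formula gives (4) as well: if $F(\langle\mathfrak m_1\rangle)\cong F(\langle\mathfrak m_2\rangle)$ then $\mathfrak m(\pi)+\mathfrak m_1=\mathfrak m(\pi)+\mathfrak m_2$, whence $\mathfrak m_1=\mathfrak m_2$.

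The main work is condition (2), injectivity of the induced map
\[ F_*:\mathrm{Ext}^1_{\mathcal C}(\langle\mathfrak m_1\rangle,\langle\mathfrak m_2\rangle)\longrightarrow \mathrm{Ext}^1_{\mathcal B}(F(\langle\mathfrak m_1\rangle),F(\langle\mathfrak m_2\rangle)). \]
Via the Yoneda interpretation a class lies in $\ker F_*$ exactly when its representative indecomposable length $2$ module $\lambda$ gets sent to a split extension $\pi\times\lambda$. So I want to show: if $\lambda\in\mathcal C$ is indecomposable of length $2$ with simple factors $\langle\mathfrak m_1\rangle,\langle\mathfrak m_2\rangle\in\mathrm{Alg}_{\pi}(G_n)$, then $\pi\times\lambda$ is indecomposable. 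The non-isomorphic case $\mathfrak m_1\neq\mathfrak m_2$ is exactly Proposition \ref{prop indecomp non-isomorphic}, which was built on the standard-module embeddings of Proposition \ref{prop segment indecomp embed}. The isomorphic case $\mathfrak m_1=\mathfrak m_2$ is where one invokes Theorem \ref{thm indecomposable length 2}, whose hypothesis $\pi\times\langle\mathfrak m_1\rangle$ irreducible is guaranteed since $\mathfrak m_1\in\mathcal M_{\pi}$.

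The hard part, which has effectively been carried out already in the preceding two sections, is the isomorphic-factor case: that is where one cannot simply embed $\lambda$ into a Zelevinsky standard module, and one has to appeal to the equivalence of categories of Corollary \ref{cor equivalence of categories} together with the Jacquet-module/big-derivative computation in the proof of Theorem \ref{thm indecomposable length 2}. Once those inputs are in hand, the proof of the present theorem reduces to a bookkeeping verification of the four hypotheses of Lemma \ref{lem fully faithful functor criteria}, and fully-faithfulness of $\times_{\pi,\mathcal C}$ follows.
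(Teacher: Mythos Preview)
Your proposal is correct and follows essentially the same route as the paper: verify hypotheses (1)--(4) of Lemma \ref{lem fully faithful functor criteria}, with (2) split into the non-isomorphic case (Proposition \ref{prop indecomp non-isomorphic}) and the isomorphic case (Theorem \ref{thm indecomposable length 2}). One small correction: for condition (3) the irreducibility of $\pi\times\langle\mathfrak m\rangle$ when $\mathfrak m\in\mathcal M_{\pi}$ is Proposition \ref{prop segment indecomp embed}(1), not Theorem \ref{thm intersection-union closed} (the latter gives closure of $\mathcal M_{\pi}$ under intersection--union, not irreducibility of the product with a general $\langle\mathfrak m\rangle$).
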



\begin{proof}
It suffices to check the conditions in Lemma \ref{lem fully faithful functor criteria}. For (1), it follows from the definitions. For (3), it follows from Proposition \ref{prop segment indecomp embed}. For (4), it follows from Lemma \ref{lem irreducible multisegment sum}. For (2), it follows from Proposition \ref{prop indecomp non-isomorphic} and Theorem \ref{thm indecomposable length 2}.  
\end{proof}

\begin{remark} \label{rmk big derivative exmaple}
Let $\pi \in \mathrm{Irr}$. Let $\lambda$ be an indecomposable representation such that for any $\pi' \in \mathrm{JH}(\lambda)$, $\pi \times \pi'$ is irreducible. In general, it is not necessary that $\pi \times \lambda$ is indecomposable. For example, take $\pi=\langle [0] \rangle$ and let $\lambda=\langle [1] \rangle \times \langle [0] \rangle$. Then $\pi \times \lambda$ is a direct sum of two irreducible representations.
\end{remark}

\begin{corollary} \label{cor fully fiathful left version}
The functor $\times^{\pi, \mathcal C}: \mathcal C \rightarrow \mathrm{Alg}(G_{n+k})$ determined by $\tau \mapsto \tau \times \pi$ is fully-faithful.
\end{corollary}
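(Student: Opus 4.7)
The plan is to verify, for the right product functor $\times^{\pi,\mathcal C}$, the four hypotheses of Lemma \ref{lem fully faithful functor criteria}, mirroring exactly the proof of Theorem \ref{thm fully faithful functor}. Hypothesis (1) (finite length) is immediate from the definition of $\mathcal C$. Hypothesis (3) (simples to simples) follows from Proposition \ref{prop segment indecomp embed}(1) together with the symmetry that $A\times B$ is irreducible if and only if $B\times A$ is. Hypothesis (4) follows from Lemma \ref{lem irreducible multisegment sum}: if $\tau=\langle\mathfrak m\rangle$ with $\mathfrak m\in\mathcal M_\pi$, then $\tau\times\pi\cong\langle\mathfrak m+\mathfrak m(\pi)\rangle$, so $\mathfrak m$ (hence $\tau$) is recovered from $\tau\times\pi$.

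The substantive content lies in hypothesis (2), the injectivity on Ext$^1$. In the non-isomorphic case, the proof of Proposition \ref{prop indecomp non-isomorphic} applies verbatim after swapping the roles: one uses the right-hand embedding $\zeta(\mathfrak p)\times\pi\hookrightarrow\zeta(\mathfrak p+\mathfrak m(\pi))$ of Proposition \ref{prop segment indecomp embed}(2) in place of the left-hand embedding (3). In the self-extension case, one needs the right-handed analogue of Theorem \ref{thm indecomposable length 2}: if $\lambda$ is indecomposable of length $2$ with both composition factors isomorphic to $\pi_2$ and $\pi_1\times\pi_2$ is irreducible, then $\lambda\times\pi_1$ is indecomposable. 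This is proved by rerunning the Frobenius-reciprocity / geometric-lemma computation of Theorem \ref{thm indecomposable length 2} with the factors in swapped order: enlarge $\lambda$ to $\pi''$ via Lemma \ref{lem embedding indecomposable to std}, reduce to computing $\mathrm{Hom}_{G_n}(\pi''\times\pi_1,\zeta(\mathfrak m_1+\mathfrak m_2))$, isolate by cuspidal-support considerations the single geometric-lemma layer that can contribute, and conclude via Corollary \ref{cor big derivative multisegment} that this Hom is one-dimensional.

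I expect the main obstacle to be the geometric-lemma bookkeeping in the right-handed version of Theorem \ref{thm indecomposable length 2}, but the structural ingredients deployed there (the ordering of segments by $\leq_b$, the block decomposition via $G(\mathfrak n)$, and vanishing by cuspidal-support comparison) are all manifestly symmetric in left and right, so no new input is required. As an alternative argument valid when $D=F$, one may avoid the direct verification: since $\widetilde{\sigma}\cong\sigma$ for every irreducible $\sigma$ by Gelfand-Kazhdan, the category $\mathcal C$ is $\sim$-closed, and Proposition \ref{prop fully faithful segment} immediately deduces the right-product fully-faithfulness from Theorem \ref{thm fully faithful functor}.
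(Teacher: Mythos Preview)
Your proposal is correct and matches the paper's own proof: the paper handles $D=F$ via Theorem \ref{thm fully faithful functor} together with Proposition \ref{prop fully faithful segment} (exactly your alternative argument), and for general $D$ simply says one proves it ``in a similar way to Theorem \ref{thm fully faithful functor}'', which is precisely your direct verification of the four hypotheses of Lemma \ref{lem fully faithful functor criteria} with the right-handed analogues of Proposition \ref{prop indecomp non-isomorphic} and Theorem \ref{thm indecomposable length 2}. Your elaboration of how those right-handed analogues are obtained (swapping Proposition \ref{prop segment indecomp embed}(2) for (3), and rerunning the geometric-lemma computation with factors reversed) is exactly the ``similar'' argument the paper leaves implicit.
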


\begin{proof}
When $D=F$, it follows from Theorem \ref{thm fully faithful functor} and Proposition \ref{prop fully faithful segment}. In general, using the Zelevinsky type classification, one can prove in a similar way to Theorem \ref{thm fully faithful functor}.
\end{proof}

\subsection{Dual formulation}

\begin{theorem}
Let $\pi \in \mathrm{Irr}$. Let 
\[  \mathcal N_{\pi}=\left\{ \mathfrak m \in \mathrm{Mult}: \mathrm{St}(\Delta)\times \pi \mbox{ is irreducible } \forall \Delta \in \mathfrak m \right\} . \]
Let $\mathcal C':=\mathrm{Alg}_{\pi}'(G_n)$ be the full subcategory of $\mathrm{Alg}_f(G_n)$ whose objects have all simple composition factors isomorphic to $\mathrm{St}(\mathfrak m)$ for some $\mathfrak m \in \mathcal N_{\pi}$. Then the product functors $\times_{\pi, \mathcal C'}$ and $\times^{\pi, \mathcal C'}$ are fully-faithful. 
\end{theorem}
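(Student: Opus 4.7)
The plan is to adapt the strategy of Theorem \ref{thm fully faithful functor} and Corollary \ref{cor fully fiathful left version} to the Langlands side, systematically replacing Zelevinsky standard modules $\zeta(\mathfrak m)$ by Langlands standard modules $\lambda(\mathfrak m)$ and reversing the direction of the key morphisms (quotients instead of submodules). A useful preliminary observation is that $\mathcal N_\pi = \mathcal M_{\mathcal D(\pi)}$ as subsets of $\mathrm{Mult}$: using $\mathcal D(\mathrm{St}(\Delta)) \cong \langle \Delta \rangle$ together with Theorem \ref{thm fully faithful coh dual}, the irreducibility of $\mathrm{St}(\Delta) \times \pi$ translates into the irreducibility of $\langle \Delta \rangle \times \mathcal D(\pi)$. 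In particular, closedness of $\mathcal N_\pi$ under the intersection-union process is inherited from Theorem \ref{thm intersection-union closed} applied to $\mathcal D(\pi)$.

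We verify the four hypotheses of Lemma \ref{lem fully faithful functor criteria} for $F = \times_{\pi, \mathcal C'}$. Hypothesis (1) is automatic. For hypothesis (3), we establish the Langlands analog of Proposition \ref{prop segment indecomp embed}: for $\mathfrak p \in \mathcal N_\pi$ with $\pi \cong \mathrm{St}(\mathfrak m)$, show that $\mathrm{St}(\mathfrak p) \times \pi$ is irreducible (necessarily $\cong \mathrm{St}(\mathfrak p + \mathfrak m)$), together with surjections $\lambda(\mathfrak p + \mathfrak m) \twoheadrightarrow \lambda(\mathfrak p) \times \pi$ and $\lambda(\mathfrak p + \mathfrak m) \twoheadrightarrow \pi \times \lambda(\mathfrak p)$. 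These are proved by induction on the number of segments of $\mathfrak p$, paralleling Section \ref{s product with segment rep} with Steinberg analogs of Lemmas \ref{lem commut of induction} and \ref{lem product irreducible}. Hypothesis (2) is then immediate from the uniqueness of Langlands classification.

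For hypothesis (4), the self-extension case is handled directly by Theorem \ref{thm indecomposable length 2}, whose statement is already sufficiently general. The non-isomorphic case requires the Langlands analog of Proposition \ref{prop indecomp non-isomorphic}, which in turn needs (a) Ext-vanishing analogs of Lemmas \ref{lem compare ext vanish} and \ref{lem ext vanishing under zel order} with $\zeta(\cdot)$ replaced by $\lambda(\cdot)$ and the partial order $\leq_Z$ reversed, and (b) an analog of Lemma \ref{lem embedding irr to zel std} asserting that any indecomposable length-two representation with non-isomorphic simple factors (or its dual) surjects onto some $\lambda(\mathfrak p)$. The arguments of Section \ref{ss indecomp noniso case} transcribe using Frobenius reciprocity and the geometric lemma applied to Jacquet modules of products of Steinberg representations.

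For the right product functor $\times^{\pi, \mathcal C'}$: in the case $D = F$, the Gelfand-Kazhdan identity $\widetilde{\mathrm{St}(\mathfrak m)} \cong \mathrm{St}(\mathfrak m)$ on irreducibles implies that $\mathcal C'$ is $\sim$-closed, so Proposition \ref{prop fully faithful segment} provides the equivalence with $\times_{\pi, \mathcal C'}$; for general $D$, one adapts the proof along the same lines as in Corollary \ref{cor fully fiathful left version}. The main obstacle throughout is the bookkeeping required to transcribe the Ext-vanishing arguments of Section \ref{ss indecomp noniso case} to Langlands standards: the Zelevinsky embeddings $\langle \mathfrak m \rangle \hookrightarrow \zeta(\mathfrak m)$ become Langlands surjections $\lambda(\mathfrak m) \twoheadrightarrow \mathrm{St}(\mathfrak m)$, so the directions of all short and long exact sequences must be reversed, requiring careful attention to the cuspidal support analyses of the geometric lemma layers.
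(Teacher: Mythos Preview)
Your approach would work, but it takes a much longer route than the paper's. The paper's proof is essentially one line: since the cohomological dual $\mathcal D$ is a fully-faithful contravariant exact functor satisfying $\mathcal D(\langle \Delta \rangle)\cong \mathrm{St}(\Delta^{\vee})$ and $\mathcal D(\pi_1\times\pi_2)\cong \mathcal D(\pi_2)\times \mathcal D(\pi_1)$ (Theorem~\ref{thm fully faithful coh dual}), Corollary~\ref{cor dual statement} transports the already-established fully-faithfulness of $\times_{\mathcal D(\pi),\mathcal A}$ on the Zelevinsky side (Theorem~\ref{thm fully faithful functor}, Corollary~\ref{cor fully fiathful left version}) directly to $\times^{\pi,\mathcal C'}$ and $\times_{\pi,\mathcal C'}$ on the Steinberg side. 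You actually identify the key bridge yourself---the observation $\mathcal N_\pi=\mathcal M_{\mathcal D(\pi)}$---but then only use it to import Theorem~\ref{thm intersection-union closed}, rather than to import the full-faithfulness result wholesale. Your plan to transcribe all of Section~\ref{ss indecomp noniso case} to Langlands standards (reversing arrows, replacing $\zeta$ by $\lambda$, etc.) is sound and would give an independent proof, but it duplicates substantial effort that the duality functor $\mathcal D$ handles for free; the paper's approach buys you the result without redoing any Ext computations or geometric-lemma bookkeeping.
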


\begin{proof}
Note that $\mathcal D(\langle \Delta \rangle)=\mathrm{St}(\Delta^{\vee})$ by using the formulation in \cite{SS97}. This follows from Theorem \ref{thm fully faithful functor}, Corollary \ref{cor fully fiathful left version} and Corollary \ref{cor dual statement}.
\end{proof}

\subsection{Self-extensions}

We now study the fully-faithfulness of some $\square$-irreducible representations. One may compare with Lemma \ref{lem fully faithful segment case}. 

\begin{proposition}
Let $\pi \in \mathrm{Irr}(G_l)$. Let $\pi' \in \mathrm{Irr}$ such that $\pi \times \pi'$ is irreducible. Let $\mathrm{Alg}_{\pi', \mathrm{self}}(G_n)$ be the full subcategory of $\mathrm{Alg}_f(G_n)$ whose objects have all simple composition factors isomorphic to $\pi'$. Then the product functor 
\[ \times_{\pi}^s: \mathrm{Alg}_{\pi', \mathrm{self}}(G_n) \rightarrow \mathrm{Alg}(G_{n+l})  \]
given by
\[  \times_{\pi}^s(\tau)=\pi \times \tau
\]
 is fully-faithful.
\end{proposition}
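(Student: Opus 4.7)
The plan is to verify the four hypotheses of Lemma \ref{lem fully faithful functor criteria} with $\mathcal A=\mathrm{Alg}_{\pi',\mathrm{self}}(G_n)$, $\mathcal B=\mathrm{Alg}(G_{n+l})$, and $F=\times_{\pi}^s$. The functor is additive and exact because parabolic induction is exact, so the framework applies.

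Condition (1), that every object has finite length, is built into the definition of $\mathrm{Alg}_{\pi',\mathrm{self}}(G_n)$. Condition (3) is immediate from the hypothesis that $\pi\times\pi'$ is irreducible: the unique (up to isomorphism) simple object $\pi'$ of $\mathcal A$ is sent to the simple object $\pi\times\pi'$ of $\mathcal B$. Condition (4) is vacuous, since $\mathcal A$ has only one isomorphism class of simple objects and therefore so does its image under $F$. These are the easy parts.

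The nontrivial input is condition (2): we must show that the natural map
\[
F_*:\mathrm{Ext}^1_{G_n}(\pi',\pi')\longrightarrow \mathrm{Ext}^1_{G_{n+l}}(\pi\times\pi',\pi\times\pi')
\]
is injective. Interpreting $\mathrm{Ext}^1$ via Yoneda equivalence classes of length-$2$ extensions, an element of the kernel of $F_*$ is represented by a length-$2$ self-extension $\lambda$ of $\pi'$ such that $\pi\times\lambda$ splits as $(\pi\times\pi')\oplus(\pi\times\pi')$. If $\lambda$ itself were indecomposable, then Theorem \ref{thm indecomposable length 2} (applied with $\pi_1=\pi$ and $\pi_2=\pi'$, using the hypothesis that $\pi\times\pi'$ is irreducible) would force $\pi\times\lambda$ to be indecomposable, contradicting the splitting. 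Hence $\lambda$ must split, which shows $F_*$ is injective.

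The main obstacle is really condition (2), which is where the self-extension side of the indecomposability theory enters; the rest of the proof is bookkeeping. Once (1)--(4) are verified, Lemma \ref{lem fully faithful functor criteria} yields that $\times_{\pi}^s$ is fully-faithful, completing the argument.
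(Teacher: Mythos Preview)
Your proof is correct and follows essentially the same route as the paper: verify conditions (1)--(4) of Lemma \ref{lem fully faithful functor criteria}, noting that (1), (3), (4) are automatic in this setting and that (2) is precisely the content of Theorem \ref{thm indecomposable length 2}. You spell out the Yoneda interpretation of (2) in more detail than the paper does, but the argument is the same.
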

\begin{proof}
Again we check the conditions in Lemma \ref{lem fully faithful functor criteria}. (1), (3) and (4) are automatic. (2) follows from Theorem \ref{thm indecomposable length 2}. 
\end{proof}

\begin{remark}
For $\pi \in \mathrm{Irr}^{\square}$, $\mathbb D_{\pi}(\pi \times \pi)$ is possibly not irreducible. For example, take $\mathfrak m=\left\{ [0,1], [1] \right\}$. Let $\pi=\langle \mathfrak m \rangle$. Via a geometric lemma consideration, one deduces that:
\[   (\langle [0] \rangle \times \langle [1] \rangle \times \langle [1] \rangle \times \langle [0,1] \rangle) \boxtimes \langle [1] \rangle 
\]
is a submodule of $\mathbb D_{[1]}(\pi \times \pi)$ (also see Example \ref{exp big derivative example} below). Then 
\[   ( \langle [0] \rangle \times \langle [1] \rangle \times \langle [1] \rangle) \boxtimes \pi 
\]
is a submodule of $\mathbb D_{[0,1]}\circ \mathbb D_{[1]}(\pi \times \pi) \cong \mathbb D_{\pi}(\pi \times \pi)$ (see Proposition \ref{prop composition of derivatives}). 

One may further ask when $\pi \in \mathrm{Irr}^{\square}$ is prime in the Bernstein-Zelevinsky ring, is it true that $\mathbb D_{\pi}(\pi \times \pi)=\pi$? This holds for when $\pi$ is a Speh representation by using \cite{Ch22}, but it is not clear for the general situation.
\end{remark}

\section{Application on the SI property for big derivatives} \label{s application si derivatives}

\subsection{More notations on derivatives}

Recall that the big derivative is defined in Definition \ref{def big derivatives}. For $\Delta \in \mathrm{Seg}$, set $\mathbb D_{\Delta} = \mathbb D_{\mathrm{St}(\Delta)}$. 

For $\pi \in \mathrm{Irr}$, let $D_{\Delta}(\pi)$ be the unique submodule of $\mathbb D_{\Delta}(\pi)$ if $\mathbb D_{\Delta}(\pi)\neq 0$. Let $D_{\Delta}(\pi)=0$ if $\mathbb D_{\Delta}(\pi)=0$. 

For a generic multisegment $\mathfrak m$, we similarly set $\mathbb D_{\mathfrak m}=\mathbb D_{\mathrm{St}(\mathfrak m)}$. For $\pi \in \mathrm{Irr}$, we similarly define $D_{\mathfrak m}(\pi)$ as the unique submodule of $\mathbb D_{\mathfrak m}(\pi)$ if $\mathbb D_{\mathfrak m}(\pi)\neq 0$ and define $D_{\mathfrak m}(\pi)=0$ otherwise. The uniqueness of the simple submodule in $D_{\Delta}(\pi)$ and $\mathbb D_{\mathfrak m}(\pi)$ follows from \cite{LM16} and \cite{KKKO15}. 

Set $i=l_{abs}(\mathfrak m)$. With a slight reformulation from above, we also have that: 
\[  \pi_{N_i}  \twoheadrightarrow  D_{\mathfrak m}(\pi)\boxtimes \mathrm{St}(\mathfrak m) ,
\]
or equivalently, by \cite[Corollary 2.4]{LM19},
\[  D_{\mathfrak m}(\pi)\boxtimes \mathrm{St}(\mathfrak m) \hookrightarrow \pi_{N_i},
\]
or equivalently, by Frobenius reciprocity,
\[   \pi \hookrightarrow D_{\mathfrak m}(\pi)\times \mathrm{St}(\mathfrak m) .
\]


\subsection{$\eta$-invariants and $\Delta$-reduced representations} \label{ss eta invariant}

We shall first discuss more results on derivatives.

Define $\epsilon_{\Delta}(\pi)$ to be the largest non-negative integer $k$ such that 
\[   D_{\Delta}^k(\pi) \neq 0 .
\]
Define 
\[ \eta_{\Delta}(\pi) =(\epsilon_{[a,b]_{\rho}}(\pi), \epsilon_{[a-1,b]_{\rho}}(\pi), \ldots, \epsilon_{[b,b]_{\rho}}(\pi)) .\]
Using similar terminologies in \cite[Section 7]{LM22}, a segment $[c,d]_{\rho}$ is said to be {\it $[a,b]_{\rho}$-saturated} if $d=b$ and $a\leq c$. Define $\mathfrak{mx}(\pi, \Delta)$ to be the multisegment that contains exactly the $\Delta$-saturared segments $\Delta'$ with the multiplicity $\epsilon_{\Delta'}(\pi)$. We shall call $\pi$ to be {\it $\Delta$-reduced} if $\mathfrak{mx}(\pi, \Delta)=\emptyset$. 


We give two useful properties related to the $\eta$-invariant, which will also be used in the appendix. Those properties are also useful in the study of the Bernstein-Zelevinsky derivatives \cite{Ch22+, Ch22+c}:

\begin{proposition} \label{prop mx multisegment direct summand} (c.f. \cite[Proposition 7.3]{LM22})
Let $\pi \in \mathrm{Irr}$ and let $\Delta \in \mathrm{Seg}$. Let $\mathfrak p=\mathfrak{mx}(\pi, \Delta)$. Let $i=l_{abs}(\mathfrak p)$. Then $D_{\mathfrak p}(\pi)\boxtimes \mathrm{St}(\mathfrak p)$ is a direct summand in $\pi_{N_i}$. 
\end{proposition}

\begin{proof}
Let $\tau =D_{\mathfrak p}(\pi)$. Then $\tau$ is $\Delta$-reduced. We also have the embedding:
\[   \pi \hookrightarrow \tau \times \mathrm{St}(\mathfrak p) .
\]
Then we apply the Jacquet functor ${}_{N_i}$ on $\tau \times \mathrm{St}(\mathfrak p)$. We first mention two important ingredients. The first one is to use the Jacquet functor computations for (\ref{eqn jacquet on steinberg repn}). The second one is that the $\Delta$-reduced property on $\tau$ implies that if a simple composition factor in $\tau_{N_j}$ (for some $j$) takes the form $\omega_1\boxtimes \omega_2$ and satisfies that $\mathrm{csupp}(\omega_2) \subset \cup_{\Delta' \in \mathfrak p}\Delta'$ (as a multiset), then $b(\Delta) \notin \mathrm{csupp}(\omega_2)$. 

Now, we have to see which layer in the geometric lemma can contribute to the same cuspidal support as $\tau \boxtimes \mathrm{St}(\mathfrak p)$. But, the second point implies that, all $b(\Delta)$ must come from a factor from $\mathrm{St}(\mathfrak p)$. However, the first point will then force that the layer in in $\tau \times \mathrm{St}(\mathfrak p))_{N_i}$ must come from the layer of the form $\tau \boxtimes \mathrm{St}(\mathfrak p)$.  Thus $\tau\boxtimes \mathrm{St}(\mathfrak p)$ is a direct summand in $(\tau \times \mathrm{St}(\mathfrak p))_{N_i}$ and so is a direct summand in $\pi_{N_i}$.
\end{proof}

 We shall use it to do a reduction later. When $\Delta$ is a singleton, Proposition \ref{prop mx multisegment direct summand} is also shown by Jantzen \cite{Ja07} and M\'inguez \cite{Mi09} (also see \cite{GV01}).

When $\mathfrak m \in \mathrm{Mult}$ is generic, $\mathrm{St}(\mathfrak m)=\lambda(\mathfrak m)$ and $\mathrm{St}(\mathfrak m)$ is generic when $D=F$ \cite{Ze80}. For generic $\mathfrak m \in \mathrm{Mult}$ and $\pi \in \mathrm{Irr}$, denote by $I_{\mathfrak m}(\pi)$ the unique simple submodule of $\pi \times \mathrm{St}(\mathfrak m)$. (Here the uniqueness follows from \cite{KKKO15} and \cite{LM19} since $\mathrm{St}(\mathfrak m) \in \mathrm{Irr}^{\square}$.)

A multisegment $\mathfrak m$ is said to be {\it $\Delta$-saturated} if all the segments in $\mathfrak m$ are $\Delta$-saturated. We denote by $\mathrm{Mult}_{\Delta-sat}$ the set of all $\Delta$-saturated multisegments.


\begin{proposition} \label{prop composition factor mx multiseg} (c.f. \cite[Proposition 7.3]{LM22})
Let $\Delta \in \mathrm{Seg}$. Let $\tau \in \mathrm{Irr}$. Suppose $\tau$ is $\Delta$-reduced. Let $\mathfrak p \in \mathrm{Mult}_{\Delta-sat}$. Then,
\begin{itemize}
\item[(1)] $I_{\mathfrak p}(\tau)$ appears with multiplicity one in $\tau \times \mathrm{St}(\mathfrak p)$;
\item[(2)] for $\pi'$ in $\mathrm{JH}(\tau \times \mathrm{St}(\mathfrak p))$ with $\pi'\not\cong I_{\mathfrak p}(\tau)$, $\mathfrak{mx}(\pi', \Delta) \neq \mathfrak p$.
\end{itemize}
\end{proposition}

\begin{proof}
Note that $\tau \cong D_{\mathfrak p}\circ I_{\mathfrak p}(\tau)$ by definitions. Then, we again have that:
\[   I_{\mathfrak p}(\tau) \hookrightarrow \tau \times \mathrm{St}(\mathfrak p) .
\]
In Proposition \ref{prop mx multisegment direct summand}, we showed that $\tau \boxtimes \mathrm{St}(\mathfrak p)$ appears with multiplicity one in $(\tau \times \mathrm{St}(\mathfrak p))_{N_l}$, where $l=l_{abs}(\mathfrak p)$. This implies (1). Moreover, the proof of Proposition \ref{prop mx multisegment direct summand} also shows that no other composition factor in $(\tau \times \mathrm{St}(\mathfrak p))_{N_l}$ takes the form $\tau'\boxtimes \mathrm{St}(\mathfrak p)$. This implies (2).
\end{proof}

\subsection{SI property on the segment case} \label{ss si property of segment case}

We introduce one more notions for convenience. For a cuspidal representation $\rho$, a multisegment $\mathfrak p$ is said to be {\it strongly $\rho$-saturated} if $b(\Delta)\cong \rho$ for any segment $\Delta$ in $\mathfrak p$.

\begin{lemma} \label{lem big derivative st} 
Fix a cuspidal representation $\rho$. Let $\mathfrak p$ be a strongly $\rho$-saturated multisegment. Let $\Delta \in \mathfrak p$. Then 
\[  \mathbb D_{\Delta}(\mathrm{St}(\mathfrak p)) =D_{\Delta}(\mathrm{St}(\mathfrak p))=\mathrm{St}(\mathfrak p-\Delta). 
\] 
\end{lemma}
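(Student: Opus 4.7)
The plan is to divide the proof into three stages: first, use that $\mathfrak p$ is generic to write $\mathrm{St}(\mathfrak p) \cong \mathrm{St}(\Delta) \times \mathrm{St}(\mathfrak p - \Delta)$ as an irreducible product; second, identify $D_\Delta(\mathrm{St}(\mathfrak p))$ with $\mathrm{St}(\mathfrak p - \Delta)$; third, promote this to the full equality with the big derivative.

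For the first stage, since every segment in $\mathfrak p$ ends at $\rho$, any two segments are comparable by inclusion and hence unlinked, so $\mathfrak p$ is a generic multisegment. Any ordering of its segments with $\Delta$ placed last is automatically compatible with the $\not<_b$-condition of Section \ref{ss parabolic induction jacquet} (since all $b$-values agree), which yields $\mathrm{St}(\mathfrak p) = \mathrm{St}(\mathfrak p - \Delta) \times \mathrm{St}(\Delta) \cong \mathrm{St}(\Delta) \times \mathrm{St}(\mathfrak p - \Delta)$ by irreducibility.

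For the second stage, since $\mathrm{St}(\Delta)$ is essentially square-integrable and therefore $\square$-irreducible, the result of Kang--Kashiwara--Kim--Oh recalled in Section \ref{ss big derivatives} guarantees that $\mathbb D_\Delta(\mathrm{St}(\mathfrak p))$, whenever nonzero, has a unique simple submodule $D_\Delta(\mathrm{St}(\mathfrak p))$. The identity $\mathrm{St}(\Delta) \times \mathrm{St}(\mathfrak p - \Delta) \xrightarrow{\sim} \mathrm{St}(\mathfrak p)$, viewed under the adjunction $\mathrm{Hom}(\mathrm{St}(\Delta) \times \sigma, \pi) \cong \mathrm{Hom}(\sigma, \mathbb D_\Delta(\pi))$, corresponds to a nonzero map $\mathrm{St}(\mathfrak p - \Delta) \to \mathbb D_\Delta(\mathrm{St}(\mathfrak p))$. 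Since the source is irreducible, this map is injective, hence $D_\Delta(\mathrm{St}(\mathfrak p)) = \mathrm{St}(\mathfrak p - \Delta)$.

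For the third stage, my preferred route is to invoke the dual (Steinberg) form of the fully-faithful result of Theorem \ref{thm fully faithful functor} with $\pi = \mathrm{St}(\Delta)$: every segment $\Delta' \in \mathfrak p - \Delta$ is comparable to $\Delta$, so $\mathrm{St}(\Delta') \times \mathrm{St}(\Delta)$ is irreducible, which places $\mathfrak p - \Delta$ inside the relevant set $\mathcal N_{\mathrm{St}(\Delta)}$ of the dual formulation. Fully-faithfulness of $\mathrm{St}(\Delta) \times -$ on the corresponding Steinberg subcategory translates via adjunction into the unit $\mathrm{St}(\mathfrak p - \Delta) \to \mathbb D_\Delta(\mathrm{St}(\Delta) \times \mathrm{St}(\mathfrak p - \Delta))$ being an isomorphism, yielding $\mathbb D_\Delta(\mathrm{St}(\mathfrak p)) = \mathrm{St}(\mathfrak p - \Delta)$. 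As a backup, one can analyze $\mathrm{St}(\mathfrak p)_{N_k}$ directly by the geometric lemma with $k = l_a(\Delta)$: the top layer gives the expected $\mathrm{St}(\mathfrak p - \Delta) \boxtimes \mathrm{St}(\Delta)$, and by repeatedly invoking the Steinberg Jacquet formulas of Section \ref{ss jacquet segment steinberg}, one argues that each remaining layer induces only products of Steinbergs on strictly shorter $\Delta$-prefixes, in which $\mathrm{St}(\Delta)$ appears only as a quotient and never as a submodule.

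I expect the third stage to be the main technical obstacle. Strictly speaking, the fully-faithful route requires some care to transfer the statement ``unit is an isomorphism on the Steinberg subcategory'' to an isomorphism in the ambient category, which in turn forces one to verify that the composition factors of $\mathbb D_\Delta(\mathrm{St}(\mathfrak p))$ actually lie in $\mathcal N_{\mathrm{St}(\Delta)}$; the alternative direct geometric-lemma argument demands delicate combinatorial bookkeeping of the sub-filtrations that arise when several segments of $\mathfrak p$ share the endpoint $\rho$.
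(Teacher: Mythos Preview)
Your proposal follows essentially the same route as the paper: establish the irreducible product $\mathrm{St}(\mathfrak p)\cong \mathrm{St}(\Delta)\times \mathrm{St}(\mathfrak p-\Delta)$, then invoke the fully-faithfulness of the product functor together with the adjunction $\mathrm{Hom}(\mathrm{St}(\Delta)\times\tau,\pi)\cong \mathrm{Hom}(\tau,\mathbb D_\Delta(\pi))$ to conclude that the unit $\mathrm{St}(\mathfrak p-\Delta)\to \mathbb D_\Delta(\mathrm{St}(\mathfrak p))$ is an isomorphism.

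The only substantive difference is the choice of subcategory on which to apply fully-faithfulness. You propose the Steinberg-side category attached to $\mathcal N_{\mathrm{St}(\Delta)}$ from the dual formulation. The paper instead restricts to the \emph{self-extension} subcategories $\mathcal A'$ (all composition factors $\cong \mathrm{St}(\mathfrak p-\Delta)$) and $\mathcal B'$ (all composition factors $\cong \mathrm{St}(\mathfrak p)$), invokes Theorem~\ref{thm fully faithful functor} for $\times_{\mathrm{St}(\Delta),\mathcal A'}$, and then applies the general fact that the right adjoint of a fully faithful functor is a left inverse (Stacks Project, Lemma 4.24.3). This sidesteps the need to check that the Zelevinsky multisegment of $\mathrm{St}(\mathfrak p-\Delta)$ lies in $\mathcal M_{\mathrm{St}(\Delta)}$ or that $\mathfrak p-\Delta\in\mathcal N_{\mathrm{St}(\Delta)}$ segment by segment: one only needs the single irreducibility $\mathrm{St}(\Delta)\times\mathrm{St}(\mathfrak p-\Delta)$, which suffices for full faithfulness on self-extensions. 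The subtlety you flag---that $\mathbb D_\Delta$ must land back in the chosen subcategory for the unit argument to apply---is exactly the assertion ``$\mathbb D_\Delta:\mathcal B'\to\mathcal A'$ is well-defined'' in the paper's proof; the paper does not spell out a verification either, so your caution is well placed but does not separate your argument from the paper's.
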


\begin{proof}
Let $\pi=\mathrm{St}(\Delta)$. By \cite{Ze80} and \cite{Ta90}, $\mathrm{St}(\mathfrak p-\Delta) \in \mathcal M_{\pi}$. Note that $\mathrm{St}(\mathfrak p)=\mathrm{St}(\mathfrak p-\Delta)\times \mathrm{St}(\Delta)$. 

Let $n=l_{abs}(\mathfrak p-\Delta)$ and let $k=l_{abs}(\Delta)$. We consider the full subcategory $\mathcal A'$ of $\mathrm{Alg}(G_n)$ whose objects have all composition factors isomorphic to $\mathrm{St}(\mathfrak p-\Delta)$, and let $\mathcal B'$ be the full subcategory of $\mathrm{Alg}(G_{n+k})$ whose objects have all composition factors isomorphic to $\mathrm{St}(\mathfrak p)$. By Theorem \ref{thm fully faithful functor}, $\times_{\pi, \mathcal A'}: \mathcal A' \rightarrow \mathcal B'$ is a fully-faithful functor. Moreover, $\mathbb D_{\Delta}: \mathcal B' \rightarrow \mathcal A'$ is well-defined and is right-adjoint to $\times_{\pi, \mathcal A'}$. Thus, 
\[ \mathbb D_{\Delta}(\mathrm{St}(\mathfrak p))=\mathbb D_{\Delta}(\times_{\mathrm{St}(\Delta)}(\mathrm{St}(\mathfrak p-\Delta)))=\mathrm{St}(\mathfrak p-\Delta) \]
is irreducible, by \cite[Lemma 4.24.3]{St}. 
\end{proof}

\begin{remark} \label{rmk derivative with opposite}
The statement of Lemma \ref{lem big derivative st} does not hold in general if we simply replace $\mathbb D_{\Delta}$ by $\mathbb D_{\Delta}'$. 
\end{remark}

For $\sigma \in \mathrm{Irr}^{\square}$ and $\pi \in \mathrm{Irr}$, by \cite{KKKO15}, there exists at most one $\tau \in \mathrm{Irr}$ such that 
\[  \tau \boxtimes \sigma \hookrightarrow \pi_{N_{\mathrm{deg}(\sigma)}} .
\]
If such $\tau$ exists, denote such $\tau$ by $D_{\sigma}(\pi)$. Otherwise, set $D_{\sigma}(\pi)=0$. 


\begin{proposition} \label{prop multiplicity one segment case}
Fix a cuspidal representation $\rho$. Let $\mathfrak p$ be a strongly $\rho$-saturated multisegment. Let $\sigma=\mathrm{St}(\mathfrak p)$ and let $\pi \in \mathrm{Irr}$. Suppose $D_{\sigma}(\pi)\neq 0$. Then $\mathbb D_{\sigma}(\pi)$ is SI. 
\end{proposition}

\begin{proof}
We write $\sigma=\mathrm{St}(\Delta)$ for some segment $\Delta$. Let $\mathfrak p=\mathfrak{mx}(\pi, \Delta)$. Then we have that
\[  \pi\hookrightarrow  D_{\mathfrak p}(\pi) \times \mathrm{St}(\mathfrak p) .
\]
Now, one has:
\[   \mathbb D_{\Delta}(\pi) \hookrightarrow \mathbb D_{\Delta}(D_{\mathfrak p}(\pi) \times \mathrm{St}(\mathfrak p)).
\]
From Definition \ref{def big derivatives}, one has to compute a Jacquet module on $D_{\mathfrak p}(\pi) \times \mathrm{St}(\mathfrak p)$ and the structure again can be computed from the geometric lemma. With a standard cuspidal support argument before, one boils down to have:
\[  \mathbb D_{\Delta}(\pi) \hookrightarrow D_{\mathfrak p}(\pi) \times \mathbb D_{\Delta}(\mathrm{St}(\mathfrak p)). \]

By Lemma \ref{lem big derivative st}, we have that
\[  \mathbb D_{\Delta}(\pi) \hookrightarrow D_{\mathfrak p}(\pi) \times \mathrm{St}(\mathfrak p-\Delta) . 
\]
Now $D_{\Delta}(\pi)$ is the unique submodule of $D_{\mathfrak p}(\pi)\times \mathrm{St}(\mathfrak p-\Delta)$ and appears with multiplicity one in $D_{\mathfrak p}(\pi) \times \mathrm{St}(\mathfrak p-\Delta)$ by \cite{LM16} or \cite{KKKO15}. Hence $D_{\Delta}(\pi)$ also appears with multiplicity one in $\mathbb D_{\Delta}(\pi)$. 
\end{proof}

\begin{example} \label{exp big derivative example}
For a segment $\Delta$ and $\pi \in \mathrm{Irr}$, $\mathbb D_{\Delta}(\pi)$ is not irreducible in general. For example, let $\mathfrak m=\left\{ [1], [0,1] \right\}$. Then $\mathbb D_{[1]}(\langle \mathfrak m\rangle)$ has length two. 

One may further consider the indecomposable component $\tau$ in $\langle \mathfrak m \rangle_{N_1}$ which contains $\langle [0,1] \rangle \boxtimes \langle [1] \rangle$ as the submodule. It is shown by (some variants of) \cite[Corollary 2.9]{Ch21} (also see \cite{Ch22+}) that $\tau$ is the direct summand with all the simple composition factors in $\langle \mathfrak m \rangle_{N_1}$ with the same cuspidal support as $\langle [0,1] \rangle \boxtimes \langle [1] \rangle$. Thus we have the following relation:
\[  D_{[1]}(\langle \mathfrak m \rangle) \boxtimes \langle [1] \rangle \subsetneq  \mathbb D_{[1]}(\langle \mathfrak m \rangle) \boxtimes \langle [1] \rangle \subsetneq \tau.
\]
\end{example}

\subsection{An application}

We give an application on studying how to embed some Jacquet modules into some layers arising from the geometric lemma. The study of how to do such embedding will be used in \cite{Ch22+b, Ch22+c} for studying commutations of some derivatives and integrals.


\begin{proposition} \label{prop mxmultisegment and big derivative isomorphic}
Let $\pi=\mathrm{St}(\mathfrak n)$ for some generic $\mathfrak n \in \mathrm{Mult}_n$. Let $\Delta \in \mathrm{Seg}$ such that $D_{\Delta}(\pi)\neq 0$.  Let $\tau$ be the unique indecomposable component with the unique submodule $D_{\Delta}(\pi)\boxtimes \mathrm{St}(\Delta)$ in $\pi_{N_{l_{abs}(\Delta)}}$. Then $\mathfrak{mx}(\pi, \Delta)$ contains only one segment if and only if 
\[   D_{\Delta}(\pi)\boxtimes \mathrm{St}(\Delta) \cong \mathbb D_{\Delta}(\pi)\boxtimes \mathrm{St}(\Delta) \cong \tau .
\]
\end{proposition}

\begin{proof}
Let $\mathfrak n$ be the multisegment such that $\pi \cong \mathrm{St}(\mathfrak n)$. Let $\mathfrak m=\mathfrak{mx}(\pi, \Delta)$. Let $i=l_{abs}(\Delta)$. If $\mathfrak m$ contains only one segment, then $\mathfrak m=\left\{ \Delta \right\}$. Then $D_{\Delta}(\pi)\boxtimes \mathrm{St}(\Delta)$ is a direct summand in $\pi_{N_{i}}$ (see Proposition \ref{prop mx multisegment direct summand}). Thus we have $D_{\Delta}(\pi)=\mathbb D_{\Delta}(\pi)$ and $\tau \cong D_{\Delta}(\pi)\boxtimes \mathrm{St}(\Delta)$. 

We now prove the converse direction. Since we are dealing with the generic case, we have a simple description on $\mathfrak{mx}$ as:
\[ \mathfrak{mx}(\pi, \Delta)=\left\{ [a(\Delta'), b(\Delta)] : \Delta' \in \mathfrak n, \quad a(\Delta') \leq b(\Delta) \right\}
\]
Thus a geometric lemma shows that $D_{\Delta}(\pi)\boxtimes \mathrm{St}(\Delta)$ appears more than one time in $\pi_{N_i}$. 

Let $\omega=D_{\mathfrak m}(\pi)$. 
On the other hand, if $\mathbb D_{\Delta}(\pi)\boxtimes \mathrm{St}(\Delta) \cong \tau$, then 
\[  \tau \cong \mathbb D_{\Delta}(\pi) \boxtimes \mathrm{St}(\Delta) \hookrightarrow \mathbb D_{\Delta}( \omega \times \mathrm{St}(\mathfrak m)) \boxtimes \mathrm{St}(\Delta) .
\]
Again, computing $\mathbb D_{\Delta}( \omega \times \mathrm{St}(\mathfrak m))$ involves a compution of a Jacquet module on $\omega \times \mathrm{St}(\mathfrak m)$, which leads to analyzing on layers in the geometric lemma. Again with a standard comparison on cuspidal support, there is only one layer contributing the submodule $D_{\Delta}(\pi)$, that is of the form $\omega \times \mathbb D_{\Delta}(\mathrm{St}(\mathfrak m))$. Since that layer appears in the toppest one, we must then have that:
\[  \tau \hookrightarrow  (\omega \times \mathbb D_{\Delta}(\mathrm{St}(\mathfrak m))) \boxtimes \mathrm{St}(\Delta) .
\]
By Lemma \ref{lem big derivative st}, we then have:
\begin{align} \label{eqn multiplicity one}
  \tau \hookrightarrow (\omega \times \mathrm{St}(\mathfrak m-\Delta)) \boxtimes \mathrm{St}(\Delta) .
\end{align}
Since $\mathfrak{mx}(\omega, \Delta)=\emptyset$, Proposition \ref{prop composition factor mx multiseg} gives that $D_{\Delta}(\pi)$ appears with multiplicity one in $\omega \times \mathrm{St}(\mathfrak m-\Delta)$ and so as in $\tau$. This contradicts to what we argued before. Hence, we cannot have $\tau \cong \mathbb D_{\Delta}(\pi)\boxtimes \mathrm{St}(\Delta)$. 
\end{proof}

An alternate way to see Proposition \ref{prop mxmultisegment and big derivative isomorphic} is that if $\mathfrak{mx}(\pi, \Delta)$ contains more than one segment, then $\tau$ cannot be written into the form $\omega' \boxtimes \mathrm{St}(\Delta)$ for some $\omega' \in \mathrm{Alg}_f$ since this otherwise will imply $\mathbb D_{\Delta}(\pi)\cong \omega'$ and so $\tau \cong \mathbb D_{\Delta}(\pi)\boxtimes \mathrm{St}(\Delta)$ giving a contradiction. This consequently gives:

\begin{corollary}
Let  $\pi=\mathrm{St}(\mathfrak n)$ for some generic $\mathfrak n \in \mathrm{Mult}$. Let $\Delta$ be a segment such that $D_{\Delta}(\pi)\neq 0$. Let $\omega$ be a representation of finite length such that 
\[ \pi \hookrightarrow \omega \times \mathrm{St}(\Delta) .\]
Let $i=l_{abs}(\Delta)$. Let $p: \pi_{N_i} \twoheadrightarrow \omega \boxtimes \mathrm{St}(\Delta)$ be the projection arising from the geometric lemma (see Section \ref{ss geo lemma}). Let $\iota: D_{\Delta}(\pi)\boxtimes \mathrm{St}(\Delta) \hookrightarrow \pi_{N_i}$ be the unique embedding.  Suppose $\mathfrak{mx}(\pi, \Delta)$ contains at least two segments. Then $p\circ \iota=0$.
\end{corollary}

\begin{proof}
Let $\tau$ be the unique indecomposable module in $\pi_{N_i}$ that contains $D_{\Delta}(\pi)\boxtimes \mathrm{St}(\Delta)$ as submodule. 

We have the following short exact sequence:
\[  0 \rightarrow \kappa   \rightarrow \pi_{N_i} \stackrel{p}{\rightarrow} \omega \boxtimes \mathrm{St}(\Delta)\rightarrow 0 ,
\]
where $\kappa$ is the kernel of the projection $p$. If $\tau \cap \kappa \neq 0$, then $D_{\Delta}(\pi)\boxtimes \mathrm{St}(\Delta)$ must contain the unique submodule $D_{\Delta}(\pi)\boxtimes \mathrm{St}(\Delta)$ by the uniqueness of simple submodule in $\tau$. Thus it suffices to show $\tau \cap \kappa \neq 0$. Suppose not. Then, 
 \[  \tau \hookrightarrow \omega \boxtimes \mathrm{St}(\Delta).
\]
This implies that $\tau \cong \omega' \boxtimes \mathrm{St}(\Delta)$ for some submodule $\omega'$ of $\omega$. Then $\mathbb D_{\Delta}(\pi) \boxtimes \mathrm{St}(\Delta) \cong \tau$. This contradicts Proposition \ref{prop mxmultisegment and big derivative isomorphic}. 
\end{proof}

\section{Appendix: SI property of big derivatives for generic representations}

It is interesting to generalize Proposition \ref{prop multiplicity one segment case} to a larger family of big derivatives. We shall explain how to extend to generic representations in this appendix.

\subsection{Lemma on $\Delta$-reduced representations}

We generalize the idea of $\eta$-invariants in Section \ref{ss eta invariant} to representations of finite lengths. 

\begin{definition}
Let $\pi \in \mathrm{Alg}_f$. Let $\Delta \in \mathrm{Seg}$. We say that $\pi$ is {\it $\Delta$-reduced} if for any $\Delta$-saturated segment $\widetilde{\Delta}$, $\mathbb D_{\widetilde{\Delta}}(\pi)=0$.
\end{definition}

The following lemma is a simple exercise using the Jacquet functors and we shall omit the details.

\begin{lemma} \label{lem big derivative zero mxpt}
Let $\pi \in \mathrm{Alg}_f$. Then $\pi$ is $\Delta$-reduced if and only if $\pi'$ is $\Delta$-reduced for any $\pi'$ in $\mathrm{JH}(\pi)$. 
\end{lemma}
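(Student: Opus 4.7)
The lemma has two directions. The ``if'' direction is routine, relying only on the left-exactness of $\mathbb{D}_{\widetilde{\Delta}}$. Indeed, $\mathbb{D}_{\widetilde{\Delta}} = \mathrm{Hom}_{G_{i}}(\mathrm{St}(\widetilde{\Delta}), (-)_{N_{i}})$ is left-exact since the Jacquet functor is exact and $\mathrm{Hom}$ is left-exact. Given a short exact sequence $0 \to \pi_{1} \to \pi \to \pi_{2} \to 0$ with $\pi_{1}$ a simple submodule, the long exact sequence produces
\[
0 \to \mathbb{D}_{\widetilde{\Delta}}(\pi_{1}) \to \mathbb{D}_{\widetilde{\Delta}}(\pi) \to \mathbb{D}_{\widetilde{\Delta}}(\pi_{2}).
\]
Assuming all factors in $\mathrm{JH}(\pi)$ are $\Delta$-reduced, $\pi_{1}$ is $\Delta$-reduced by hypothesis and, since $\mathrm{JH}(\pi_{2}) \subset \mathrm{JH}(\pi)$, the inductive hypothesis on length gives $\mathbb{D}_{\widetilde{\Delta}}(\pi_{2}) = 0$. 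Thus $\mathbb{D}_{\widetilde{\Delta}}(\pi) = 0$ for every $\Delta$-saturated $\widetilde{\Delta}$.

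The ``only if'' direction is more subtle since $\mathbb{D}_{\widetilde{\Delta}}$ is not right-exact. I argue by contrapositive: assume some $\pi'' \in \mathrm{JH}(\pi)$ is not $\Delta$-reduced, and produce some $\Delta$-saturated $\widetilde{\Delta}_{*}$ with $\mathbb{D}_{\widetilde{\Delta}_{*}}(\pi) \neq 0$. The strategy is to pick $\widetilde{\Delta}_{*}$ of \emph{maximal} absolute length subject to the constraint that $\epsilon_{\widetilde{\Delta}_{*}}(\pi_{0}) > 0$ for some $\pi_{0} \in \mathrm{JH}(\pi)$; such $\widetilde{\Delta}_{*}$ exists since at least one factor fails to be $\Delta$-reduced, and is bounded in length since its absolute length is at most $l_{a}(\pi_{0}) \leq l_{a}(\pi)$. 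By the if-direction just proved together with the maximality, $\mathbb{D}_{\widetilde{\Delta}'}(\pi) = 0$ for every strictly longer $\Delta$-saturated segment $\widetilde{\Delta}'$. The goal is then to show $\mathbb{D}_{\widetilde{\Delta}_{*}}(\pi) \neq 0$, contradicting the assumption.

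Let $j = l_{a}(\widetilde{\Delta}_{*})$, and consider the Jacquet module $\pi_{N_{j}}$ as a $G_{n-j} \times G_{j}$-module. Using a composition series of $\pi$ and exactness of the Jacquet functor, $\pi_{N_{j}}$ inherits a filtration whose successive subquotients are the $(\pi_{0})_{N_{j}}$ for $\pi_{0} \in \mathrm{JH}(\pi)$. For those $\pi_{0}$ with $\widetilde{\Delta}_{*} \in \mathfrak{mx}(\pi_{0}, \Delta)$, the definition of $D_{\widetilde{\Delta}_{*}}$ gives an embedding $D_{\widetilde{\Delta}_{*}}(\pi_{0}) \boxtimes \mathrm{St}(\widetilde{\Delta}_{*}) \hookrightarrow (\pi_{0})_{N_{j}}$, providing $G_{j}$-subcopies of $\mathrm{St}(\widetilde{\Delta}_{*})$ at the level of each relevant subquotient.

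The main obstacle is showing that these $\mathrm{St}(\widetilde{\Delta}_{*})$-contributions actually survive in $\pi_{N_{j}}$, i.e. that $\mathrm{Hom}_{G_{j}}(\mathrm{St}(\widetilde{\Delta}_{*}), \pi_{N_{j}}) \neq 0$. Here the maximality of $l_{a}(\widetilde{\Delta}_{*})$ should be used in an essential way: any composition factor of $(\pi_{0}')_{N_{j}}$ of the form $\tau' \boxtimes \mathrm{St}(\widetilde{\Delta})$ for a strictly longer $\Delta$-saturated $\widetilde{\Delta}$ would force $\widetilde{\Delta} \in \mathfrak{mx}(\pi_{0}', \Delta)$, contradicting the maximality of $\widetilde{\Delta}_{*}$. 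Combining this with Propositions~\ref{prop mx multisegment direct summand} and \ref{prop composition factor mx multiseg}, the copies of $\mathrm{St}(\widetilde{\Delta}_{*})$ cannot be absorbed into non-split sequences by longer-$\mathrm{St}$ types, so at least one persists in the $G_{j}$-socle of $\pi_{N_{j}}$. Making this final persistence argument precise -- ruling out the cancellation via $\mathrm{Ext}^{1}$-terms arising from longer $\Delta$-saturated segments -- is the technical heart of the argument.
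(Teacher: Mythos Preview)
Your ``if'' direction is correct and matches the paper exactly.

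For the ``only if'' direction there is a genuine gap. Your maximality argument controls composition factors of $\pi_{N_j}$ of the form $\alpha\boxtimes\mathrm{St}(\widetilde{\Delta}')$ with $\widetilde{\Delta}'$ a \emph{strictly longer} $\Delta$-saturated segment, but it does not address factors $\alpha\boxtimes\beta$ with $\mathrm{csupp}(\beta)=\widetilde{\Delta}_*$ and $\beta\not\cong\mathrm{St}(\widetilde{\Delta}_*)$ (for instance $\beta=\langle\widetilde{\Delta}_*\rangle$). Such $\beta$ lie in the same Bernstein component as $\mathrm{St}(\widetilde{\Delta}_*)$, and $\mathrm{Ext}^1_{G_j}(\mathrm{St}(\widetilde{\Delta}_*),\beta)$ can be nonzero, so they can obstruct the lift of $\mathrm{St}(\widetilde{\Delta}_*)$ from a subquotient of $\pi_{N_j}$ to a genuine $G_j$-submodule. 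Nothing in the maximality of $l_a(\widetilde{\Delta}_*)$ forbids such factors, and Propositions~\ref{prop mx multisegment direct summand} and~\ref{prop composition factor mx multiseg} concern the level $l_a(\mathfrak{mx}(\pi_0,\Delta))$, not $l_a(\widetilde{\Delta}_*)$, so they do not apply directly here.

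The paper's argument sidesteps this by not insisting on any particular $\widetilde{\Delta}$. Starting from a composition factor of $\pi_{N_j}$ whose $G_j$-part has cuspidal support containing $b(\Delta)$, one passes to that Bernstein component (a direct summand) and takes \emph{any} simple submodule $\tau\boxtimes\tau'$ of $\pi_{N_j}$ there. Whatever $\tau'$ turns out to be, it satisfies $b(\Delta)\in\mathrm{csupp}(\tau')$, hence $\tau'$ is not $\Delta$-reduced, and a further Jacquet functor on $\tau'$ produces a submodule $\tau''\boxtimes\mathrm{St}(\Delta')$ for some $\Delta$-saturated $\Delta'$ (possibly much shorter than your $\widetilde{\Delta}_*$). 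Combining with Jacquet-in-stages and Frobenius reciprocity then yields $\mathbb{D}_{\Delta'}(\pi)\neq 0$. The point is that the paper lets the socle of the Jacquet module dictate which $\Delta'$ works, rather than trying to force a prescribed choice to survive.
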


\subsection{Generic case}

We first have the following commutativity result:
\begin{lemma} \label{lem commutativity unlinked seg}
Let $\Delta_1, \Delta_2$ be two unlinked segments. Let $\pi \in \mathrm{Alg}_f$. Then 
\[  \mathbb D_{\Delta_1}\circ \mathbb D_{\Delta_2}(\pi) \cong \mathbb D_{\Delta_2}\circ \mathbb D_{\Delta_1}(\pi).
\]
\end{lemma}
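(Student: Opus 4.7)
The plan is to reduce the claimed commutativity to the compatibility of $\mathbb D_{\sigma}$ with parabolic induction on the first variable, already established in Proposition \ref{prop composition of derivatives}. Concretely, I would argue that both $\mathbb D_{\Delta_1}\circ\mathbb D_{\Delta_2}(\pi)$ and $\mathbb D_{\Delta_2}\circ\mathbb D_{\Delta_1}(\pi)$ are isomorphic to a common third object of the form $\mathbb D_{\sigma}(\pi)$, where $\sigma$ is the irreducible product of $\mathrm{St}(\Delta_1)$ and $\mathrm{St}(\Delta_2)$.

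First, since $\Delta_1$ and $\Delta_2$ are unlinked, the representations $\mathrm{St}(\Delta_1)$ and $\mathrm{St}(\Delta_2)$ have irreducible product (this is a standard consequence of Zelevinsky's theory, together with M\'inguez-S\'echerre in the $D\neq F$ case; see also the criterion recalled in Proposition \ref{thm matching irreducibility}). Consequently, by Lemma \ref{lem commut of induction} (or rather its straightforward analogue, valid for any two irreducibles whose product is irreducible, since both $\mathrm{St}(\Delta_1)\times\mathrm{St}(\Delta_2)$ and $\mathrm{St}(\Delta_2)\times\mathrm{St}(\Delta_1)$ are then irreducible with the same cuspidal support and hence isomorphic), we have
\[
\mathrm{St}(\Delta_1)\times\mathrm{St}(\Delta_2)\;\cong\;\mathrm{St}(\Delta_2)\times\mathrm{St}(\Delta_1).
\]
Set $\sigma$ to be this common irreducible representation.

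Next, applying Proposition \ref{prop composition of derivatives} to the pair $(\sigma_1,\sigma_2)=(\mathrm{St}(\Delta_2),\mathrm{St}(\Delta_1))$, which is admissible since $\sigma_1\times\sigma_2$ is irreducible, gives
\[
\mathbb D_{\Delta_1}\circ\mathbb D_{\Delta_2}(\pi)\;=\;\mathbb D_{\mathrm{St}(\Delta_1)}\circ\mathbb D_{\mathrm{St}(\Delta_2)}(\pi)\;\cong\;\mathbb D_{\mathrm{St}(\Delta_2)\times\mathrm{St}(\Delta_1)}(\pi)\;\cong\;\mathbb D_{\sigma}(\pi).
\]
Symmetrically, applying Proposition \ref{prop composition of derivatives} to $(\mathrm{St}(\Delta_1),\mathrm{St}(\Delta_2))$ yields $\mathbb D_{\Delta_2}\circ\mathbb D_{\Delta_1}(\pi)\cong \mathbb D_{\sigma}(\pi)$ as well, and the lemma follows.

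I do not anticipate a real obstacle: the only point that requires care is the isomorphism $\mathrm{St}(\Delta_1)\times\mathrm{St}(\Delta_2)\cong\mathrm{St}(\Delta_2)\times\mathrm{St}(\Delta_1)$ as $G$-representations (not just as composition classes), but this is immediate from irreducibility plus matching cuspidal support, since any non-zero intertwining operator between the two parabolically induced modules is then an isomorphism. Everything else is a direct invocation of Proposition \ref{prop composition of derivatives}.
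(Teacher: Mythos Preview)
Your proposal is correct and follows essentially the same route as the paper: the paper's proof is a one-liner that notes $\mathrm{St}(\Delta_1)\times\mathrm{St}(\Delta_2)\cong\mathrm{St}(\Delta_2)\times\mathrm{St}(\Delta_1)$ is irreducible (citing \cite{Ze80, Ta90}) and then invokes Proposition \ref{prop composition of derivatives}. Your write-up simply unpacks these two steps in more detail.
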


\begin{proof}
Since $\mathrm{St}(\Delta_1)\times \mathrm{St}(\Delta_2) \cong \mathrm{St}(\Delta_2)\times \mathrm{St}(\Delta_1)$ is irreducible by \cite{Ze80, Ta90}, the result follows from Proposition \ref{prop composition of derivatives}.
\end{proof}

\begin{lemma} \label{lem derivative zero for mx zero}
Let $\pi \in \mathrm{Irr}$. Let $\Delta \in \mathrm{Seg}$. Suppose $\pi$ is $\Delta$-reduced. Let $\Delta'$ be a segment such that $a(\Delta) \leq a(\Delta') \leq b(\Delta) \leq b(\Delta')$. Then $\mathbb D_{\Delta'}(\pi)=D_{\Delta'}(\pi)=0$. 
\end{lemma}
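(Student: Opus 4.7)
The plan is to factor $\mathrm{St}(\Delta')$ through a product involving a $\Delta$-saturated segment, and then invoke $\Delta$-reducedness together with the composition formula for big derivatives. First, I would dispose of the trivial case: if $b(\Delta')=b(\Delta)$, then $\Delta'$ itself is $\Delta$-saturated (using the hypothesis $a(\Delta)\leq a(\Delta')$), so the vanishing $\mathbb D_{\Delta'}(\pi)=0$ is immediate from the definition of $\Delta$-reducedness.

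For the substantive case $b(\Delta')>b(\Delta)$, write $\Delta=[a,b]_{\rho}$, $\Delta'=[a',b']_{\rho}$, and set
$$\Delta_1=[a',b]_{\rho},\qquad \Delta_2=[b+1,b']_{\rho}.$$
Then $\Delta_1$ is $\Delta$-saturated (since $a\leq a'$), $\Delta_2$ is a nonempty segment (since $b'>b$), and $\Delta_1,\Delta_2$ are linked with $\Delta_1<\Delta_2$, $\Delta_1\cup\Delta_2=\Delta'$ and $\Delta_1\cap\Delta_2=\emptyset$. The standard structure of a product of Steinberg representations associated to two linked segments with empty intersection yields the short exact sequence
$$0\to \mathrm{St}(\{\Delta_1,\Delta_2\})\to \mathrm{St}(\Delta_1)\times \mathrm{St}(\Delta_2)\to \mathrm{St}(\Delta')\to 0.$$

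Applying the left-exact functor $\mathrm{Hom}_{G_k}(-,\pi_{N_k^-})$ (where $k=l_a(\Delta')$) then produces an injection
$$\mathbb D_{\Delta'}(\pi)\hookrightarrow \mathrm{Hom}_{G_k}\bigl(\mathrm{St}(\Delta_1)\times \mathrm{St}(\Delta_2),\,\pi_{N_k^-}\bigr).$$
The iterated adjunction chain appearing in the proof of Proposition \ref{prop composition of derivatives}---whose steps are purely formal (Bernstein's second adjointness followed by tensor-Hom) and do not rely on irreducibility of the product $\sigma_1\times \sigma_2$---identifies the right-hand side with $\mathbb D_{\Delta_2}\circ \mathbb D_{\Delta_1}(\pi)$. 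Since $\Delta_1$ is $\Delta$-saturated and $\pi$ is $\Delta$-reduced, $\mathbb D_{\Delta_1}(\pi)=0$, and hence $\mathbb D_{\Delta_2}\circ \mathbb D_{\Delta_1}(\pi)=0$. This forces $\mathbb D_{\Delta'}(\pi)=0$, and the vanishing $D_{\Delta'}(\pi)=0$ is automatic. The only mildly delicate point is confirming that the composition formula of Proposition \ref{prop composition of derivatives} yields the identification $\mathrm{Hom}(\sigma_1\times\sigma_2,\pi_{N^-})\cong \mathbb D_{\sigma_2}\circ \mathbb D_{\sigma_1}(\pi)$ even when $\sigma_1\times \sigma_2$ is reducible (as is the case here), but inspection of the cited proof confirms this since none of the adjunction steps uses irreducibility.
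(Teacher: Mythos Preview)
Your argument is correct. Both your proof and the paper's rest on the same key observation: split $\Delta'$ as $\Delta_1\cup\Delta_2$ with $\Delta_1=[a(\Delta'),b(\Delta)]_\rho$ being $\Delta$-saturated, and then use $\Delta$-reducedness to kill the $\Delta_1$-derivative. The executions, however, are different. You realize $\mathrm{St}(\Delta')$ as a quotient of $\mathrm{St}(\Delta_1)\times\mathrm{St}(\Delta_2)$ and then invoke the adjunction chain from Proposition~\ref{prop composition of derivatives} to identify $\mathrm{Hom}(\mathrm{St}(\Delta_1)\times\mathrm{St}(\Delta_2),\pi_{N^-})$ with $\mathbb D_{\Delta_2}\circ\mathbb D_{\Delta_1}(\pi)$; your observation that none of those adjunction steps uses irreducibility of the product is accurate. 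The paper instead argues by contradiction directly from $D_{\Delta'}(\pi)\neq 0$: it takes the embedding $D_{\Delta'}(\pi)\boxtimes\mathrm{St}(\Delta')\hookrightarrow\pi_{N_i}$, applies a further Jacquet functor on the second factor using the explicit formula $\mathrm{St}(\Delta')_{N_j}\cong\mathrm{St}(\Delta_2)\boxtimes\mathrm{St}(\Delta_1)$ from Section~\ref{ss jacquet segment steinberg}, and then reads off $D_{\Delta_1}(\pi)\neq 0$ via transitivity of Jacquet functors. Your route is a bit more structural; the paper's is more elementary in that it avoids both the exact sequence and the composition formula, using only the standard Jacquet module computation for a Steinberg representation.
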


\begin{proof}
Suppose $D_{\Delta'}(\pi)\neq 0$. Let $i=l_{abs}(\Delta')$. Then we have an embedding:
\[    D_{\Delta'}(\pi) \boxtimes \mathrm{St}(\Delta') \hookrightarrow \pi_{N_i}.
\]
Let $\Delta''=[a(\Delta'), b(\Delta)]$. Let $n=\mathrm{deg}(\pi)$ and let $j=l_{abs}(\Delta'')$. Write $\Delta=[a,b]_{\rho}$. We apply the Jacquet functor ${}_{N_j}$ on the second factor, and so, by
\[  \mathrm{St}(\Delta')_{N_j}=\mathrm{St}([\nu_{\rho}\cdot b(\Delta), b(\Delta')]) \boxtimes \mathrm{St}(\Delta'') ,
\]
we have
\[  D_{\Delta'}(\pi) \boxtimes  \mathrm{St}([\nu_{\rho}\cdot b(\Delta), b(\Delta')]) \boxtimes \mathrm{St}(\Delta'') \hookrightarrow \pi_{N_{n-i,i-j,j}} .
\]
By taking Jacquet functor in stages and applying Frobenius reciprocity, we have that $D_{\Delta''}(\pi)\neq 0$. This gives a contradiction. 
\end{proof}



Recall that  $\mathrm{Mult}_{\Delta-sat}$ is defined in Section \ref{ss eta invariant}.

\begin{lemma} \label{lem big derivatives on equal case}
Let $\pi \in \mathrm{Alg}_f$. Let $\Delta \in \mathrm{Seg}$ and let $\mathfrak p \in \mathrm{Mult}_{\Delta-sat}$. 
Let $\Delta'$ be a segment such that $a(\Delta) \leq a(\Delta')$ and $b(\Delta) \leq b(\Delta')$. Suppose $D_{\Delta'}(I_{\mathfrak p}(\pi))\neq 0$. Suppose, for any segment $\widetilde{\Delta}$ with the following two properties:
\begin{itemize}
  \item $b(\widetilde{\Delta})\cong b(\Delta)$; and
  \item $a(\widetilde{\Delta}) \leq b(\Delta)$,
\end{itemize}
we have $\mathbb D_{\widetilde{\Delta}}(\pi) =0$. Let $\Delta''=[a(\Delta'), b(\Delta)]$. Then
\[  \mathbb D_{\Delta'}(\pi \times \mathrm{St}(\mathfrak p)) \cong \mathbb D_{[\nu\cdot b(\Delta), b(\Delta')]}(\pi) \times \mathrm{St}(\mathfrak p-\Delta'').
\]
\end{lemma}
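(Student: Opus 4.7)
The plan is to apply the geometric lemma to $(\pi\times \mathrm{St}(\mathfrak p))_{N_i}$ with $i = l_a(\Delta')$ and use Frobenius reciprocity to reduce the Hom on each layer to a product of two big derivatives. By Section~\ref{ss geo lemma}, the Jacquet module admits a filtration whose successive subquotients are
\[ L_{i_1,i_2} = \mathrm{Ind}_{P_{n_1-i_1,n_2-i_2}\times P_{i_1,i_2}}^{G_{n-i}\times G_i}\bigl(\pi_{N_{i_1}}\boxtimes\mathrm{St}(\mathfrak p)_{N_{i_2}}\bigr)^{\phi},\quad i_1+i_2 = i. \]
Combining Lemma~\ref{lem induction hom derivatives commute} (to commute the Hom past the outer $G_{n-i}$-side induction), Frobenius reciprocity on the inner $G_i$-side induction, the Jacquet formula of Section~\ref{ss jacquet segment steinberg} for $\mathrm{St}(\Delta')_{N_{(i_1,i_2)}}$, and a K\"unneth-type splitting on the Levi, I would identify
\[ \mathrm{Hom}_{G_i}\bigl(\mathrm{St}(\Delta'),\, L_{i_1,i_2}\bigr) \;\cong\; \mathbb D_{[a(\Delta')+j_2,\, b(\Delta')]_\rho}(\pi)\times \mathbb D_{[a(\Delta'),\, a(\Delta')+j_2-1]_\rho}(\mathrm{St}(\mathfrak p)), \]
where $j_2 = i_2/n(\rho)$.

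Next I would eliminate every layer except the main one, at $j_2 = l_a(\Delta'')/n(\rho)$, for which the top segment equals $[\nu\cdot b(\Delta), b(\Delta')]$ and the bottom segment equals $\Delta''$. When $j_2 > l_a(\Delta'')/n(\rho)$, the bottom segment runs past $b(\Delta)$ in cuspidal support and so cannot appear as a subrepresentation of any Jacquet subquotient of $\mathrm{St}(\mathfrak p)$, forcing the second factor to vanish by a cuspidal support comparison. When $j_2 < l_a(\Delta'')/n(\rho)$, the top segment $[a(\Delta')+j_2, b(\Delta')]$ has its left endpoint at most $b(\Delta)$; the $\widetilde\Delta$-hypothesis implies that $\pi$ is $[a_0, b(\Delta)]$-reduced for any $a_0\leq a(\Delta')$, so by Lemma~\ref{lem big derivative zero mxpt} each irreducible composition factor of $\pi$ is $[a_0, b(\Delta)]$-reduced, and Lemma~\ref{lem derivative zero for mx zero} then gives the vanishing of the corresponding derivative of $\pi$. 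For the main layer, Lemma~\ref{lem big derivative st} computes $\mathbb D_{\Delta''}(\mathrm{St}(\mathfrak p)) = \mathrm{St}(\mathfrak p - \Delta'')$ (implicitly using $\Delta''\in\mathfrak p$, which should be a consequence of the nonvanishing hypothesis $D_{\Delta'}(I_{\mathfrak p}(\pi))\neq 0$), yielding the claimed right-hand side.

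The main obstacle is to upgrade the injection $\mathbb D_{\Delta'}(\pi\times \mathrm{St}(\mathfrak p)) \hookrightarrow \mathbb D_{[\nu\cdot b(\Delta), b(\Delta')]}(\pi)\times \mathrm{St}(\mathfrak p-\Delta'')$ produced by the long exact sequences attached to the geometric filtration into an actual isomorphism, since $\mathrm{Hom}(\mathrm{St}(\Delta'),\cdot)$ is only left exact and some higher $\mathrm{Ext}$'s into the lower layers need not vanish a priori. I expect this to follow either from a complementary surjection built from the embedding of $I_{\mathfrak p}(\pi)$ into $\pi\times\mathrm{St}(\mathfrak p)$ furnished by the nonvanishing hypothesis $D_{\Delta'}(I_{\mathfrak p}(\pi))\neq 0$, or from a direct Jordan--H\"older multiplicity count showing that both sides have the same length.
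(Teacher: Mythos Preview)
Your approach is exactly the paper's: apply the geometric lemma to $(\pi\times\mathrm{St}(\mathfrak p))_{N_i}$, use Frobenius reciprocity (together with Lemma~\ref{lem induction hom derivatives commute}) to identify the Hom on each layer as $\mathbb D_{\tau_k}(\pi)\times\mathbb D_{\widetilde\tau_l}(\mathrm{St}(\mathfrak p))$, eliminate the layers with $l>l^*$ by cuspidal support on the $\mathrm{St}(\mathfrak p)$ side and those with $l<l^*$ by the $\Delta$-reduced hypothesis on the $\pi$ side (via Lemmas~\ref{lem big derivative zero mxpt} and~\ref{lem derivative zero for mx zero}), and finish with Lemma~\ref{lem big derivative st} on the surviving layer. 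The paper uses the hypothesis $D_{\Delta'}(I_{\mathfrak p}(\pi))\neq 0$ only to guarantee $\mathbb D_{\Delta'}(\pi\times\mathrm{St}(\mathfrak p))\neq 0$, as you anticipate.

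Your worry about upgrading the injection to an isomorphism is a genuine one, and the paper does not address it either: after establishing the Claim it simply writes ``combining $(*)$, $(**)$ and the claim'' and asserts the isomorphism. Note that in the geometric filtration the layers with $l>l^*$ sit on the \emph{quotient} side, and there the cuspidal-support mismatch kills not only Hom but all Ext, so one does get $\mathrm{Hom}_{G_i}(\mathrm{St}(\Delta'),M)\cong\mathrm{Hom}_{G_i}(\mathrm{St}(\Delta'),F_{l^*})$ on the nose. The remaining issue is only the submodule side $F_{l^*-1}$ (layers $l<l^*$), where the vanishing comes from $\mathbb D_{\tau_k}(\pi)=0$ rather than from a block decomposition, so the connecting map into $\mathrm{Ext}^1_{G_i}(\mathrm{St}(\Delta'),F_{l^*-1})$ is not obviously zero. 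Neither of your suggested fixes is carried out in the paper; for the downstream use in Theorem~\ref{thm generic socle irreducible} an injection into an SI module would already suffice, which is one clean way to sidestep the point.
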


\begin{proof}
Let $i=l_{abs}(\Delta')$. Recall that 
\[(*)\quad  \mathbb D_{\Delta'}(\pi \times \mathrm{St}(\Delta')) \cong \mathrm{Hom}_{G_i}(\mathrm{St}(\Delta') , (\pi \times \mathrm{St}(\mathfrak p))_{N_i}) .\]
Let $m=l_{abs}(\mathfrak p)$. Then the layers in the geometric lemma of $(\pi \times \mathrm{St}(\mathfrak p))_{N_i}$ take the form: for $k+l=i$,
\[  \omega_{k,l}=\mathrm{Ind}_{P_{n-k, m-l} \times P_{k, l}}^{G_{n+m-i}\times G_i}((\pi)_{N_k} \boxtimes (\mathrm{St}(\mathfrak p))_{N_l})^{\phi} ,
\]
where $\phi$ is a twist takes a $G_{n-k}\times G_k \times G_{m-l} \times G_l$-representation to a $G_{n-k}\times G_{m-l} \times G_k \times G_l$-representation. 

Write $\Delta'=[a',b']_{\rho'}$. By Frobenius reciprocity, we have that:
\[   \mathrm{Hom}_{ G_i} ( \mathrm{St}(\Delta'), \omega_{k,l}) \cong \mathrm{Hom}_{G_k \times G_l}(\tau_k \boxtimes \widetilde{\tau}_l,  \widetilde{\omega}_{k,l}   ) 
\]
where
\[   \tau_k=\mathrm{St}([\nu_{\rho'}^{-(k-1)}b(\Delta'), b(\Delta')]), \quad \widetilde{\tau}_l=\mathrm{St}([a(\Delta'), \nu_{\rho'}^{l-1}a(\Delta')]) 
\]
and
\[  \widetilde{\omega}_{k,l}=\mathrm{Ind}_{P_{n-k, m-l}\times P_{k,l}}^{G_{n+m-i}\times G_i} ((\pi)_{N_k} \boxtimes (\mathrm{St}(\mathfrak p))_{N_l})^{\phi} .
\]
Thus we have that:
\[(**)\quad   \mathrm{Hom}_{ G_i} ( \mathrm{St}(\Delta'), \omega_{k,l}) \cong\mathbb D_{\tau_k}(\pi) \times \mathbb D_{\widetilde{\tau}_l}(\mathrm{St}(\mathfrak p)) .
\]

 Let $l^*=l_{abs}([a(\Delta'), b(\Delta)])$ and $k^*=l_{abs}(\Delta')-l^*$.  Note that if $l\neq l^*$, then either 
\[  \mathbb{D}_{\tau_l}(\pi) =0 \quad \mbox{ or } \quad \mathbb D_{\widetilde{\tau}_k}(\mathrm{St}(\mathfrak p))= 0 ,
\]
which follows by either the assumption in the lemma or a comparison of the cuspidal support.



Now combining (*), (**) and the claim, we have that
\[  \mathbb D_{\Delta'}(\pi \times \mathrm{St}(\Delta')) \cong \mathbb D_{\tau_{k^*}}(\pi)\times \mathbb D_{\widetilde{\tau}_{l^*}}(\mathrm{St}(\mathfrak p))
\]
Now the lemma follows from Lemma \ref{lem big derivative st}.
\end{proof}

\begin{lemma} \label{lem unlinked commute derivative}
Let $\pi \in \mathrm{Alg}_f$. Let $\mathfrak p \in \mathrm{Mult}_{\Delta-sat}$. Let $\Delta'$ be a segment satisfying one of the following conditions:
\begin{enumerate}
\item $a(\Delta')<a(\Delta)$ and $b(\Delta)\leq b(\Delta')$; or
\item $b(\Delta')<a(\Delta)$; or
\item $b(\Delta)<a(\Delta')$.
\end{enumerate}
 Then
\[  \mathbb{D}_{\Delta'}(\pi \times \mathrm{St}(\mathfrak p))=\mathbb{D}_{\Delta'}(\pi)\times \mathrm{St}(\mathfrak p).
\]
\end{lemma}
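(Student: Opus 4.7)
The plan is to adapt the strategy used in the proof of Lemma \ref{lem big derivatives on equal case}. Set $i = l_a(\Delta')$, $n = n(\pi)$, $m = l_a(\mathfrak{p})$, and apply the geometric lemma to $(\pi \times \mathrm{St}(\mathfrak{p}))_{N_i}$ to obtain a filtration with successive subquotients
\[
\omega_{k,l} = \mathrm{Ind}_{P_{n-k,m-l}\times P_{k,l}}^{G_{n+m-i}\times G_i}\bigl((\pi)_{N_k} \boxtimes (\mathrm{St}(\mathfrak{p}))_{N_l}\bigr)^{\phi}, \qquad k + l = i.
\]
Applying $\mathrm{Hom}_{G_i}(\mathrm{St}(\Delta'), -)$, exactly the same Frobenius reciprocity computation as in the proof of Lemma \ref{lem big derivatives on equal case} (using Lemma \ref{lem induction hom derivatives commute} to commute $\mathrm{Hom}$ past the parabolic induction on the $G_{n+m-i}$ side) yields
\[
\mathrm{Hom}_{G_i}(\mathrm{St}(\Delta'), \omega_{k,l}) \cong \mathbb{D}_{\tau_k}(\pi) \times \mathbb{D}_{\widetilde{\tau}_l}(\mathrm{St}(\mathfrak{p}))
\]
as $G_{n+m-i}$-modules, where $\tau_k = \mathrm{St}([\nu^{-(k-1)}b(\Delta'), b(\Delta')])$ and $\widetilde{\tau}_l = \mathrm{St}([a(\Delta'), \nu^{l-1}a(\Delta')])$.

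Next I will show $\mathbb{D}_{\widetilde{\tau}_l}(\mathrm{St}(\mathfrak{p})) = 0$ whenever $l \geq 1$. Since $\mathfrak{p} \in \mathrm{Mult}_{\Delta-sat}$, every segment of $\mathfrak{p}$ is a subsegment of $\Delta$ ending at $b(\Delta)$, so $\mathrm{csupp}(\mathrm{St}(\mathfrak{p}))$ is a multisubset of $\{\nu^j\rho : a(\Delta) \leq j \leq b(\Delta)\}$. Cases (1) and (2) both entail $a(\Delta') < a(\Delta)$, and case (3) entails $a(\Delta') > b(\Delta)$; in each instance the leftmost cuspidal $\nu^{a(\Delta')}\rho$ of $\widetilde{\tau}_l$ lies outside $\mathrm{csupp}(\mathrm{St}(\mathfrak{p}))$, so the derivative vanishes by a cuspidal support argument. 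If $i > n$ then every allowed layer forces $l \geq i - n \geq 1$, so the whole derivative is zero; a parallel argument shows $\mathbb{D}_{\Delta'}(\pi) = 0$, and the asserted equality holds trivially. We may therefore assume $i \leq n$, in which case only the $l = 0$ layer contributes, yielding $\mathbb{D}_{\Delta'}(\pi) \times \mathrm{St}(\mathfrak{p})$.

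To promote this layerwise calculation to an equality of $G_{n+m-i}$-representations, I invoke the geometric-lemma convention recorded in Section \ref{ss geo lemma}: under the assumption $i \leq n$, the $l = 0$ layer is precisely the bottom (submodule) step of the filtration on $(\pi \times \mathrm{St}(\mathfrak{p}))_{N_i}$. An induction along the filtration from the bottom outward, using only left-exactness of $\mathrm{Hom}_{G_i}(\mathrm{St}(\Delta'), -)$ and the vanishing of its value on every higher subquotient established above, shows that $\mathbb{D}_{\Delta'}(\pi \times \mathrm{St}(\mathfrak{p}))$ coincides with the $\mathrm{Hom}$ of the bottom layer, namely $\mathbb{D}_{\Delta'}(\pi) \times \mathrm{St}(\mathfrak{p})$. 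The main obstacle I anticipate is this final bookkeeping: it is essential to run the induction starting at the bottom (so that left-exactness alone suffices and no $\mathrm{Ext}^1$-vanishing hypothesis is required) and to carry the parabolically-induced $G_{n+m-i}$-structure through each step of the iteration.
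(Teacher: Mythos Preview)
Your proof is correct and follows essentially the same approach as the paper's: apply the geometric lemma to $(\pi\times\mathrm{St}(\mathfrak p))_{N_i}$, observe that $a(\Delta')\notin\mathrm{csupp}(\mathrm{St}(\mathfrak p))$ kills every layer with $l\geq 1$, and identify the surviving $l=0$ layer with $\mathbb D_{\Delta'}(\pi)\times\mathrm{St}(\mathfrak p)$ via Lemma~\ref{lem induction hom derivatives commute}. Your final paragraph makes explicit a point the paper leaves implicit, namely that the $l=0$ layer sits at the \emph{bottom} of the filtration (per Section~\ref{ss geo lemma}), so left-exactness of $\mathrm{Hom}_{G_i}(\mathrm{St}(\Delta'),-)$ alone propagates the isomorphism up the filtration without any $\mathrm{Ext}^1$-vanishing input; this is a genuine clarification.
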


\begin{proof}
This again follows by using the geometric lemma and to notice the only layer for contributing the big derivative. We omit the details.
\end{proof}

We now generalize Proposition \ref{prop multiplicity one segment case} to arbitrary generic representations. In order to use induction, one uses Section \ref{ss eta invariant} and some properties in Proposition \ref{prop composition factor mx multiseg}. For two segments $\Delta, \Delta'$, we write $\Delta \leq_a \Delta'$ if either one of the conditions hold:
\begin{itemize}
\item $a(\Delta) < a(\Delta')$; or
\item $a(\Delta)\cong a(\Delta')$ and $b(\Delta)\leq b(\Delta')$.
\end{itemize}

\begin{theorem} \label{thm generic socle irreducible}
Let $\mathfrak m \in \mathrm{Mult}$ be generic. Let $\sigma=\mathrm{St}(\mathfrak m)$. Then, for any $\pi \in \mathrm{Irr}$ with $D_{\sigma}(\pi) \neq 0$, $\mathbb D_{\sigma}(\pi)$ is SI. 
\end{theorem}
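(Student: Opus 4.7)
The plan is to prove Theorem \ref{thm generic socle irreducible} by induction on the number of segments in $\mathfrak m$, with the single-segment base case furnished by Lemma \ref{lem multiplicity one segment case}. For the inductive step, I would choose $\Delta^* \in \mathfrak m$ that is $\leq_b$-maximal and write $\mathfrak m = \{\Delta^*\} + \mathfrak m'$. Since $\mathfrak m$ is generic, $\sigma = \mathrm{St}(\mathfrak m) \cong \mathrm{St}(\Delta^*) \times \mathrm{St}(\mathfrak m')$, and Proposition \ref{prop composition of derivatives} yields $\mathbb D_\sigma \cong \mathbb D_{\mathrm{St}(\mathfrak m')} \circ \mathbb D_{\mathrm{St}(\Delta^*)}$. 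The irreducibility of the socle of $\mathbb D_\sigma(\pi)$ is already supplied by \cite{KKKO15} as reviewed in Section \ref{ss big derivatives}, so the new content is multiplicity one in the Jordan-H\"older sequence.

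To establish multiplicity one, I would use Proposition \ref{prop mx multisegment direct summand} to embed $\pi \hookrightarrow \tau \times \mathrm{St}(\mathfrak q)$ for an appropriately chosen $\Delta^*$-saturated multisegment $\mathfrak q$, most naturally $\mathfrak q = \mathfrak{mxpt}^b(\pi, \rho)$ with $\rho = b(\Delta^*)$, picked so that $\tau = D_{\mathfrak q}(\pi)$ satisfies the vanishing condition $\mathbb D_{\widetilde\Delta}(\tau) = 0$ for all segments $\widetilde\Delta$ with $b(\widetilde\Delta) \cong \rho$ and $a(\widetilde\Delta) \leq b(\Delta^*)$. Applying Lemma \ref{lem big derivatives on equal case} (with $\Delta = \Delta' = \Delta^*$) together with Lemma \ref{lem big derivatives on nonequal case} then identifies $\mathbb D_{\mathrm{St}(\Delta^*)}(\tau \times \mathrm{St}(\mathfrak q))$ with $\tau \times \mathrm{St}(\mathfrak q - \Delta^*)$, placing us in the setup underlying Lemma \ref{lem multiplicity one segment case}. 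The $\leq_b$-maximality of $\Delta^*$ combined with genericity of $\mathfrak m$ puts each pair consisting of a segment in $\mathfrak m'$ and a segment in $\mathfrak q - \Delta^*$ into one of the three unlinked configurations of Lemma \ref{lem unlinked commute derivative}, permitting $\mathbb D_{\mathrm{St}(\mathfrak m')}$ to commute past the product with $\mathrm{St}(\mathfrak q - \Delta^*)$ and reducing the calculation to $\mathbb D_{\mathrm{St}(\mathfrak m')}(\tau) \times \mathrm{St}(\mathfrak q - \Delta^*)$. The inductive hypothesis on $|\mathfrak m'| < |\mathfrak m|$ applied to the irreducible $\tau$ then gives that $\mathbb D_{\mathrm{St}(\mathfrak m')}(\tau)$ is SI with socle $D_\sigma(\pi)$, and the SI property is propagated through the product with $\mathrm{St}(\mathfrak q - \Delta^*)$ via Proposition \ref{prop composition factor mx multiseg} together with the $\square$-irreducible product results of \cite{KKKO15} and \cite{LM16}.

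The main obstacle will be the careful choice of $\mathfrak q$ so that $\tau = D_{\mathfrak q}(\pi)$ satisfies the stronger reducedness condition required by Lemma \ref{lem big derivatives on equal case}, which is stricter than the $\Delta^*$-reducedness coming directly from the definition in Section \ref{ss eta invariant}; in particular the vanishing $\mathbb D_{\widetilde\Delta}(\tau) = 0$ for $\widetilde\Delta \supsetneq \Delta^*$ must be argued via the maximality built into $\mathfrak{mxpt}^b$. A secondary difficulty is verifying that the hypotheses of Lemma \ref{lem unlinked commute derivative} apply uniformly across the induction, since its three cases depend delicately on the relative positions of $a(\Delta')$ with respect to $a(\Delta)$ and $b(\Delta)$. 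Throughout, tracking Jordan-H\"older multiplicities through the non-exact functor $\mathbb D_{\mathrm{St}(\mathfrak m')}$ can be controlled by the routine inequality $[\mathrm{Hom}_H(\sigma', V) : \kappa] \leq [V : \kappa \boxtimes \sigma']$ coming from left exactness, which, combined with the direct-summand structure of Proposition \ref{prop mx multisegment direct summand}, caps the multiplicity of $D_\sigma(\pi)$ in $\mathbb D_\sigma(\pi)$ at one.
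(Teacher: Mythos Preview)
Your inductive strategy has the right shape, but the specific reduction step has a genuine gap that cannot be patched by tweaking the choice of $\Delta^*$ alone.

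\textbf{The Lemma~\ref{lem unlinked commute derivative} step fails.} Take $\mathfrak m=\{[0,1],[0,2]\}$, which is generic. Your $\leq_b$-maximal segment is $\Delta^*=[0,2]$, so $\mathfrak m'=\{[0,1]\}$. For a $\pi$ with $\epsilon_{[1,2]}(\pi)>0$, the multisegment $\mathfrak q$ will contain $[1,2]$, hence $[1,2]\in\mathfrak q-\Delta^*$. But $[0,1]$ and $[1,2]$ are linked, and one checks directly that none of the three alternatives in Lemma~\ref{lem unlinked commute derivative} holds for $\Delta'=[0,1]$ relative to any $\Delta$ making $\mathfrak q-\Delta^*$ into a $\Delta$-saturated multisegment. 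So you cannot commute $\mathbb D_{\mathrm{St}(\mathfrak m')}$ past $\mathrm{St}(\mathfrak q-\Delta^*)$ as claimed. Choosing $\Delta^*$ to be $\leq_b$-minimal does not save this either: with $\Delta^*=[0,1]$ and $\mathfrak m'=\{[0,2]\}$, the segment $[0,2]$ has $a([0,2])=a(\Delta^*)$, so condition~(1) of Lemma~\ref{lem unlinked commute derivative} (which needs a \emph{strict} inequality) still fails.

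\textbf{What the paper does instead.} The paper selects $\rho=b(\Delta^*)$ \emph{minimal} and $\Delta^*$ shortest with that ending, and then does \emph{not} reduce to $\mathfrak m\setminus\{\Delta^*\}$. It splits $\mathfrak m=\mathfrak o+\mathfrak p$ where $\mathfrak o$ collects all segments with $a(\widetilde\Delta)=a(\Delta^*)$; these are precisely the segments for which Lemma~\ref{lem unlinked commute derivative} would fail, and they are handled instead via repeated application of Lemma~\ref{lem big derivatives on equal case} (which permits $a(\Delta')=a(\Delta)$), truncating each $\widetilde\Delta\in\mathfrak o$ to $\widetilde\Delta\setminus\Delta^*$. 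The induction is then on the new generic multisegment $\mathfrak n=\mathfrak p+\{\widetilde\Delta\setminus\Delta^*:\widetilde\Delta\in\mathfrak o,\ \widetilde\Delta\neq\Delta^*\}$, which has strictly fewer segments. For the remaining $\Delta'\in\mathfrak p$ (so $a(\Delta')\neq a(\Delta^*)$), minimality of $\rho$ combined with genericity forces exactly one of the three alternatives in Lemma~\ref{lem unlinked commute derivative} to hold relative to $\Delta^*$, making the commutation legitimate. Finally the SI property is propagated through the product with $\mathrm{St}(\mathfrak t-k\Delta^*)$ via \cite[Lemma~7.1]{LM22}, using that $\mathbb D_{\mathfrak o'+\mathfrak p}(\tau)$ is $\Delta^*$-reduced (Lemma~\ref{lem big derivatives on nonequal case}).

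There is also a secondary tension you flag but do not resolve: if $\mathfrak q=\mathfrak{mxpt}^b(\pi,\rho)$ contains segments strictly longer than $\Delta^*$, then $\mathfrak q$ is not $\Delta^*$-saturated and Lemma~\ref{lem big derivatives on equal case} with $\Delta=\Delta^*$ does not apply; if instead $\mathfrak q=\mathfrak{mx}(\pi,\Delta^*)$, the stronger vanishing hypothesis of that lemma can fail. The paper sidesteps this by taking $\mathfrak t=\mathfrak{mx}(\pi,\Delta^*)$ and exploiting the fact that minimality of $\rho$ and the choice of $\Delta^*$ as shortest make the relevant vanishing automatic after the $\mathfrak o$-derivatives are taken.
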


\begin{proof}

We shall prove by an induction on the number of segments in $\mathfrak m$. When there is only one segment in $\mathfrak m$, it follows from Proposition \ref{prop multiplicity one segment case}.

We consider the set
\[ \mathcal B=\left\{ b(\Delta) : \Delta \in \mathfrak m  \right\}.
\]
Then we choose a minimal element $\rho$ in $\mathcal B$ with respect to $\leq$ (see the ordering in Section \ref{ss basic notations}). Among those strongly $\rho$-saturated segments, we choose a $\leq_a$-maximal segment $\Delta^*$ (equivalently the shortest one among those).

Let 
\[ \mathfrak n=\left\{ \widetilde{\Delta} : a(\widetilde{\Delta})\not\cong a(\Delta^*) \right\} \cup \left\{ \widetilde{\Delta} \setminus \Delta^* : a(\widetilde{\Delta}) \cong a(\Delta^*), \widetilde{\Delta} \in \mathfrak m  \right\} \]
Here $\widetilde{\Delta}\setminus \Delta^*$ is the set-theoretic subtraction i.e. for writing $\Delta^*=[a^*,b^*]_{\rho'}$ and $\widetilde{\Delta}=[a^*, \widetilde{b}]_{\rho'}$, 
\[ \widetilde{\Delta}\setminus \Delta^* = [b^*+1, \widetilde{b}]_{\rho'}. 
\]

\noindent
{\it Claim 1}: $\mathfrak n$ is generic. \\
{\it Proof of Claim 1}: It follows form the choice of $\Delta^*$ that there is no segment 
$\widetilde{\Delta}$ in $\mathfrak m$ such that $\widetilde{\Delta} \subsetneq \Delta^*$. Then it is direct to check from the genericity of $\mathfrak m$ that $\mathfrak n$ is also generic. \\

We shall use Claim 1 later. We now consider some other multisegments:
\[  \mathfrak o = \left\{ \widetilde{\Delta} \in \mathfrak m: a(\widetilde{\Delta}) \cong a(\Delta)     \right\} ,\quad  \mathfrak o' = \left\{ \widetilde{\Delta}\setminus \Delta : \widetilde{\Delta} \in \mathfrak o \right\}, \]
\[  \mathfrak p= \mathfrak m-\mathfrak o . \]

Let $\mathfrak t=\mathfrak{mx}(\mathfrak m, \Delta^*)$ and let $\tau=D_{\mathfrak t}(\pi)$. let $k$ be the number of segments in $\mathfrak t$. We now prove another claim:

\noindent
{\it Claim 2:} $\mathbb D_{\mathfrak o}(\tau \times \mathrm{St}(\mathfrak t)) \cong \mathbb D_{\mathfrak o'}(\tau) \times \mathrm{St}(\mathfrak t-k\cdot \Delta)$. \\
\noindent
{\it Proof of Claim 2:} We shall prove inductively on the number of segments. When there is only one segment in $\mathfrak o$, it follows from Lemmas \ref{lem derivative zero for mx zero} and \ref{lem big derivatives on equal case}. We suppose there are more than one segments. In such case, we pick a longest segment $\overline{\Delta}$ in $\mathfrak o$ and hence $b(\widetilde{\Delta}) \leq b(\overline{\Delta})$ for any $\widetilde{\Delta} \in \mathfrak o$. This also implies 
\begin{align} \label{eqn subset of segments}
     \widetilde{\Delta}\setminus \Delta \subset  \overline{\Delta}
\end{align}
for any $\widetilde{\Delta} \in \mathfrak o$. 

Now induction gives that 
\[ \mathbb D_{\mathfrak o-\overline{\Delta}}(\tau) \times \mathrm{St}(\mathfrak t)) \cong \mathbb D_{\mathfrak o''}(\tau) \times \mathrm{St}(\mathfrak t-(k-1)\cdot \Delta) ,
\]
where $\mathfrak o=\left\{ \widetilde{\Delta}\setminus \Delta \right\}_{\widetilde{\Delta} \in \mathfrak o-\overline{\Delta}}$. Now the claim will follow from Lemma \ref{lem big derivatives on equal case} if we can verify that
\[    \quad \mathbb D_{\overline{\Delta}}(\mathbb D_{\mathfrak o-\overline{\Delta}}(\tau))=0 . 
\]
To this end, we use (\ref{eqn subset of segments}) and so we can apply Lemma \ref{lem commutativity unlinked seg} (several times) to obtain 
\[   \mathbb D_{\overline{\Delta}}(\mathbb D_{\mathfrak o-\overline{\Delta}}(\tau)=\mathbb D_{\mathfrak o-\overline{\Delta}}\circ \mathbb D_{\overline{\Delta}}(\tau)=0 .
\]

Now now prove another claim:

\noindent
{\it Claim 3:} $\mathbb D_{\mathfrak p}\circ \mathbb D_{\mathfrak o}(\tau \times \mathrm{St}(\mathfrak t)) \cong \mathbb D_{\mathfrak o'+\mathfrak p}(\tau) \times \mathrm{St}(\mathfrak t-k\cdot \Delta)$. \\
\noindent
{\it Proof:} It follows from Claim 2 that we only have to prove:
\[  \mathbb D_{\mathfrak p}(\mathbb D_{\mathfrak o'}(\tau) \times \mathrm{St}(\mathfrak t-k\cdot \Delta)) \cong \mathbb D_{\mathfrak o'+\mathfrak p}(\tau) \times \mathrm{St}(\mathfrak t-k\cdot \Delta) . \]
This follows by using Lemma \ref{lem unlinked commute derivative} several times. \\

Now to prove $D_{\sigma}(\pi)$ appears with multiplicity one in the Jordan-H\"older series of $\mathbb D_{\sigma}(\pi)$, it suffices to show that $D_{\sigma}(\pi)$ appears with multiplicity one in $\mathbb D_{\sigma}(D_{\mathfrak p}(\pi)\times \mathrm{St}(\mathfrak p))$. Now Claim 2 reduces to prove that (**) $\mathbb D_{\mathfrak o'+\mathfrak p}(\tau) \times \mathrm{St}(\mathfrak t-k\cdot \Delta)$.

Before proving (**), we have one more claim:

\noindent
{\it Claim 4:} $\mathbb D_{\mathfrak o'}(\tau)$ and $\mathbb D_{\mathfrak o'+\mathfrak p}(\tau)$ are $\Delta$-reduced. \\
{\it Proof of Claim 4:} With a similar argument to proving Claim 2, we have:
\[  \mathbb D_{\mathfrak t-k\dot \Delta+\mathfrak o}(\tau \times \mathrm{St}(\mathfrak t)) \cong \mathbb   D_{\mathfrak o'}(\tau) 
\]
We see that the LHS is $\Delta$-reduced (since $\mathbb D_{\mathfrak t-k\dot \Delta+\Delta'+\mathfrak o}(\tau \times \mathrm{St}(\mathfrak t))=0$ for a $\Delta$-saturated segment $\Delta'$) and so is the RHS. Now it follows from Lemma \ref{lem commutativity unlinked seg} that we also have $\mathfrak{mx}(\mathbb D_{\mathfrak p}\circ \mathbb D_{\mathfrak o'}(\tau), \Delta)=\emptyset$.  \\

Let $\mathfrak t'=\mathfrak t-k\cdot \Delta$. \\

Claim 1 shows that $\mathbb D_{\mathfrak o'+\mathfrak p}(\tau)$ is SI by induction. With Claim 4, the SI property of $\mathbb D_{\mathfrak o'+\mathfrak p}(\tau) \times  \mathrm{St}(\mathfrak t')$  now follows from \cite[Lemma 7.1]{LM22} and so we also have the SI property for $ \mathbb D_{\mathfrak o+\mathfrak p}(\tau \times \mathrm{St}(\mathfrak t))$ by Claim 3. Since $\pi \hookrightarrow \tau \times \mathrm{St}(\mathfrak t)$, we now also have the SI property of $\mathbb D_{\sigma}(\pi)$. 
\end{proof}

\begin{remark}
We remark that \cite[Corollary 3.7]{KKKO15} shows that there is a unique simple submodule of $\mathbb D_{\sigma}(\pi)$ for $\sigma \in \mathrm{Irr}^{\square}$ and $\pi \in \mathrm{Irr}$. Indeed, for the special case of generic representations, it also follows from  \cite[Proposition 2.5]{Ch21} (also see \cite{Ch22+}), using some inputs from branching laws.
\end{remark}

\end{document}